\renewcommand{\epsilon}{\varepsilon}
\renewcommand{\rho}{\varrho}
\renewcommand{\phi}{\varphi}
\newcommand{\DS}{\displaystyle}
\newcommand{\N}{{\mathbb N}}
\newcommand{\Z}{{\mathbb Z}}
\newcommand{\R}{{\mathbb R}}
\newcommand{\cF}{{\cal F}}
\newcommand{\cL}{{\cal L}}
\newcommand{\cX}{{\cal X}}
\newcommand{\cY}{{\cal Y}}
\begin{document}
\newtheorem{definition}{Definition}[section]
\newtheorem{theorem}[definition]{Theorem}
\newtheorem{proposition}[definition]{Proposition}
\newtheorem{corollary}[definition]{Corollary}
\newtheorem{lemma}[definition]{Lemma}
\newtheorem{hypothesis}[definition]{Hypothesis}
\newtheorem{remark}[definition]{Remark}
\newtheorem{conjecture}[definition]{Conjecture}
\newtheorem{problem}[definition]{Problem}
\title{Cyclic Symmetry Induced Pitchfork Bifurcations \\
       in the Diblock Copolymer Model}
\author{{\em Peter Rizzi, Evelyn Sander, and Thomas Wanner} \\[2ex]
    	 Department of Mathematical Sciences, George Mason University, \\
           Fairfax, VA 22030, USA  
    }
%
\date{May 18, 2023}
\maketitle
%
%
\begin{abstract} 
The Ohta-Kawasaki model for diblock copolymers exhibits a rich
equilibrium bifurcation structure. Even on one-dimensional base domains
the bifurcation set is characterized by high levels of
multi-stability and numerous secondary bifurcation points. Many of these
bifurcations are of pitchfork type. In previous work, the authors
showed that if pitchfork bifurcations are induced by a simple~$\Z_2$
symmetry-breaking, then computer-assisted proof techniques can be used
to rigorously validate them using extended systems. 
However, many diblock copolymer pitchfork bifurcations cannot be treated
in this way. In the present paper, we show that in these more involved
cases, a cyclic group action is responsible for their existence, based
on cyclic groups of even order. We present theoretical results
establishing such bifurcation points and show that they can be
characterized as nondegenerate solutions of a suitable extended
nonlinear system. Using the latter characterization, we also demonstrate that  
computer-assisted proof techniques can be used to validate such bifurcations. While the
methods proposed in this paper are only applied to the diblock copolymer
model, we expect that they will also apply to other parabolic partial differential
equations.

\bigskip\noindent
{\bf AMS subject classifications:} 
Primary: 37G40, 37M20, 65G20, 65P30; Secondary: 37B35, 37C81, 65G30,
74G60, 74N15.

\bigskip\noindent
{\bf Keywords:} Bifurcations, Ohta-Kawasaki model, symmetry-breaking,
pitchfork bifurcations, cyclic group, computer-assisted proofs, interval
arithmetic, rigorous validation
\end{abstract}
\newpage
\setcounter{tocdepth}{2}
\tableofcontents
%
%
%
\section{Introduction}
\label{sec:intro}
Symmetry-breaking pitchfork bifurcations are a common feature of 
nonlinear partial differential equation models as they vary with respect to parameters. 
We focus here on pitchfork bifurcations of the one-dimensional Ohta--Kawasaki
model for the formation of diblock copolymers~\cite{ohta:kawasaki:86a}. 
In a previous paper~\cite{lessard:sander:wanner:17a}, we developed a rigorous
computer-assisted proof method for the validation of 
symmetry-breaking pitchfork bifurcations in the case of $\Z_2$-symmetries that
were observed in ~\cite{johnson:etal:13a}. 
These results were based on creating a validated version of the numerical methods of Werner 
and Spence~\cite{werner:spence:84a}, by reformulating the existence of the bifurcation point
as the existence of a nondegenerate solution of an extended nonlinear system.

However, there are cases of pitchfork bifurcations that  we observed in~\cite{johnson:etal:13a,
lessard:sander:wanner:17a}, but were unable to validate using the above theoretical methods --- as although a
high degree of symmetry is broken at bifurcation, all local solutions remain in the same
$\Z_2$-symmetry class, i.e., solutions are all even or all odd with respect to the center of the
one-dimensional domain. In this paper, we adapt the techniques used for the $\Z_2$-symmetry case in order to give 
theoretical underpinnings needed for 
rigorous computational  validation of symmetry-breaking bifurcations for more  general cyclic group symmetries, 
as shown in Figures~\ref{fig:oddodd} and~\ref{fig:eveneven} and described in more detail below. 
\begin{figure}
\begin{center}
\includegraphics[width=0.24\textwidth]{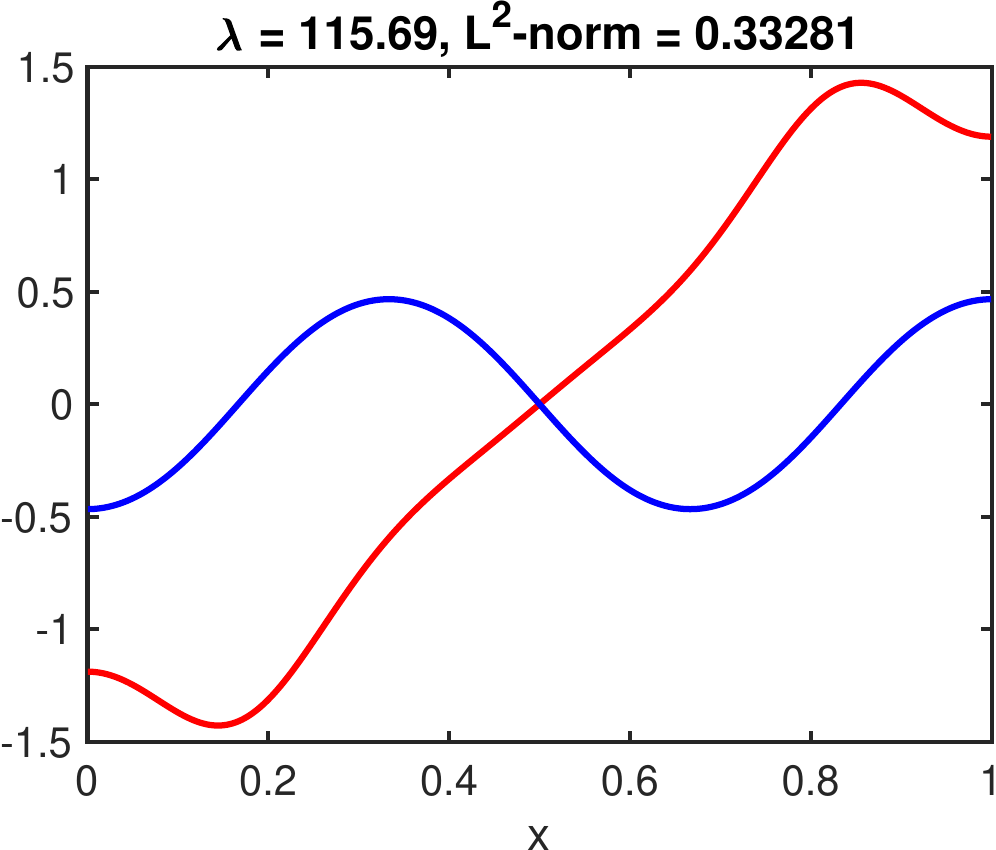}
\includegraphics[width=0.24\textwidth]{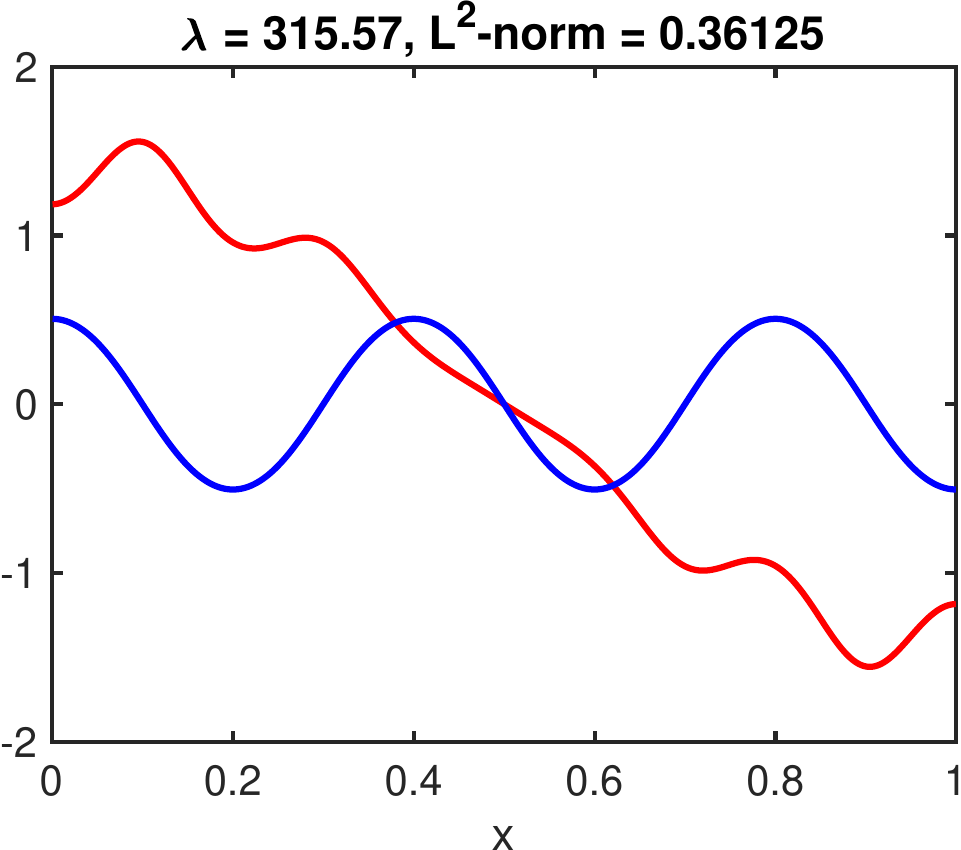}
\includegraphics[width=0.24\textwidth]{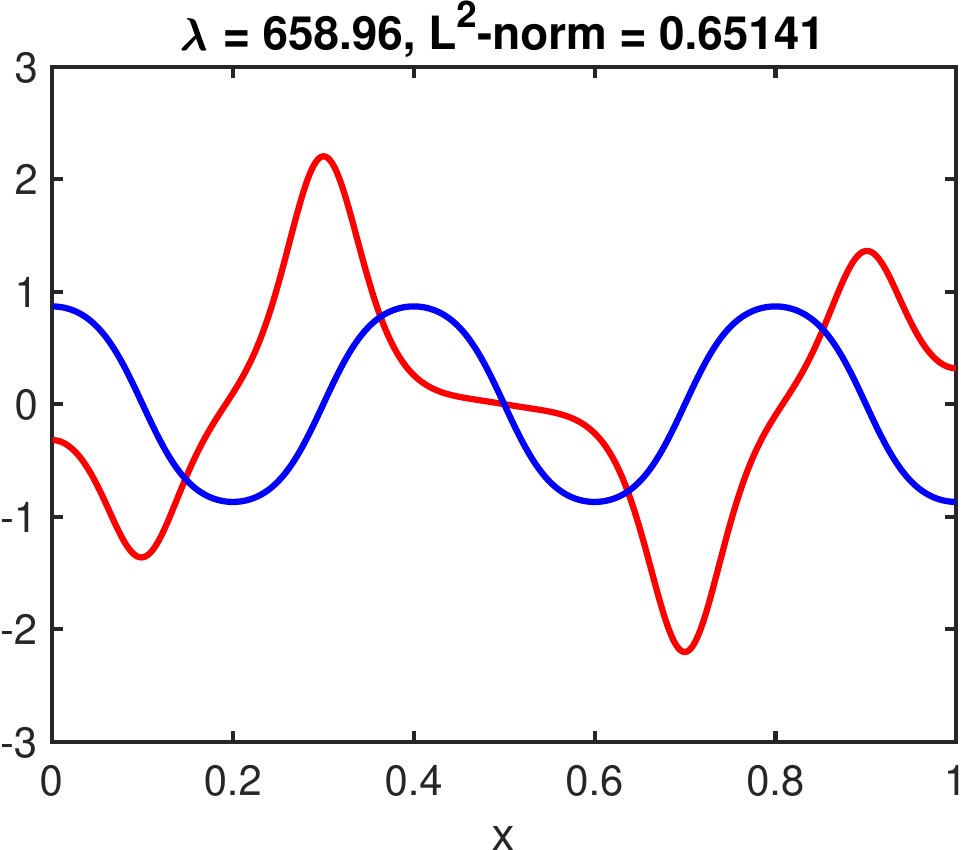}
\includegraphics[width=0.24\textwidth]{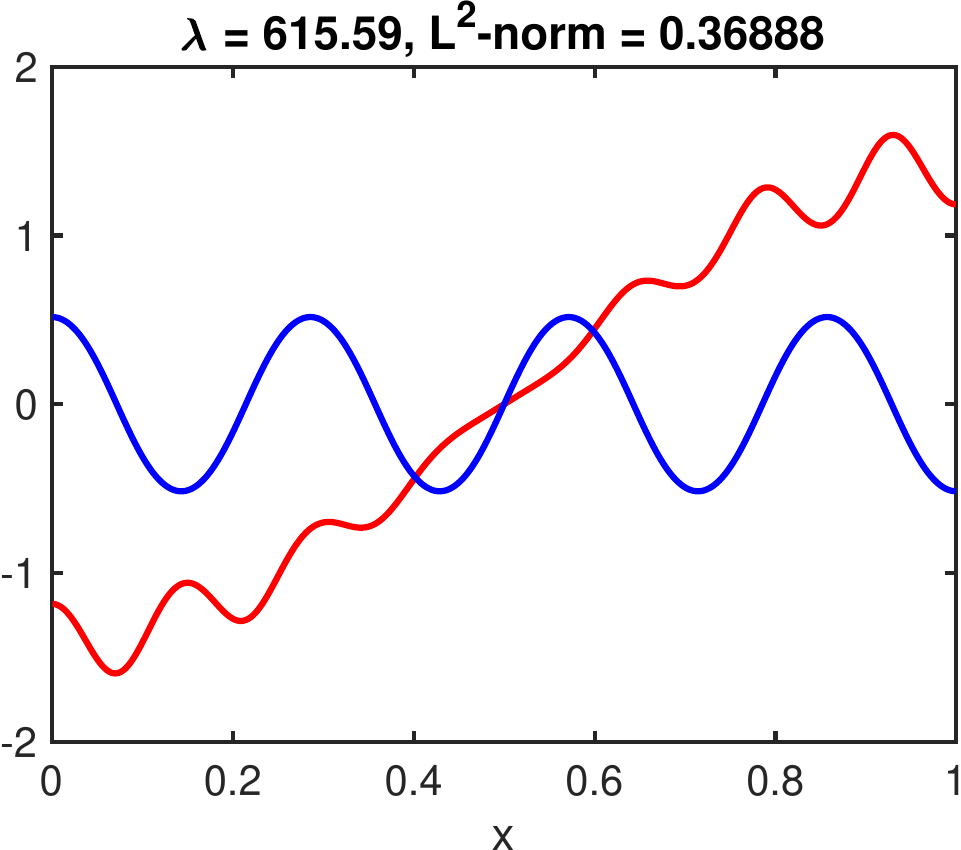}\\
\includegraphics[width=0.25\textwidth]{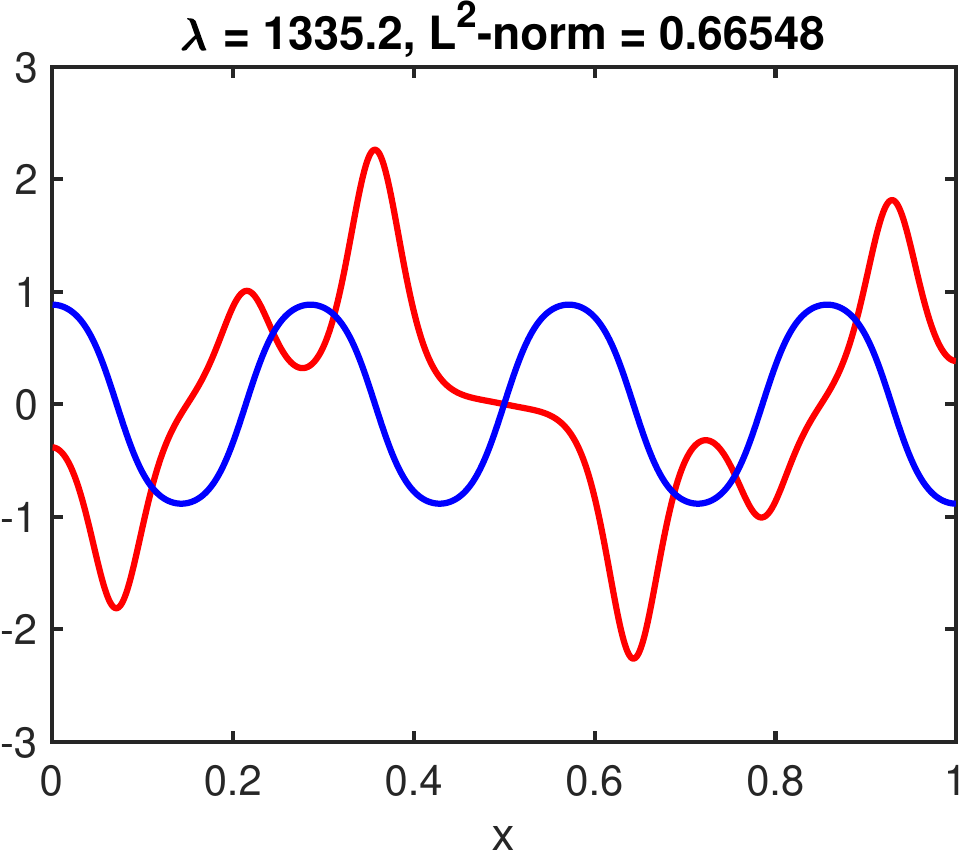}
\includegraphics[width=0.25\textwidth]{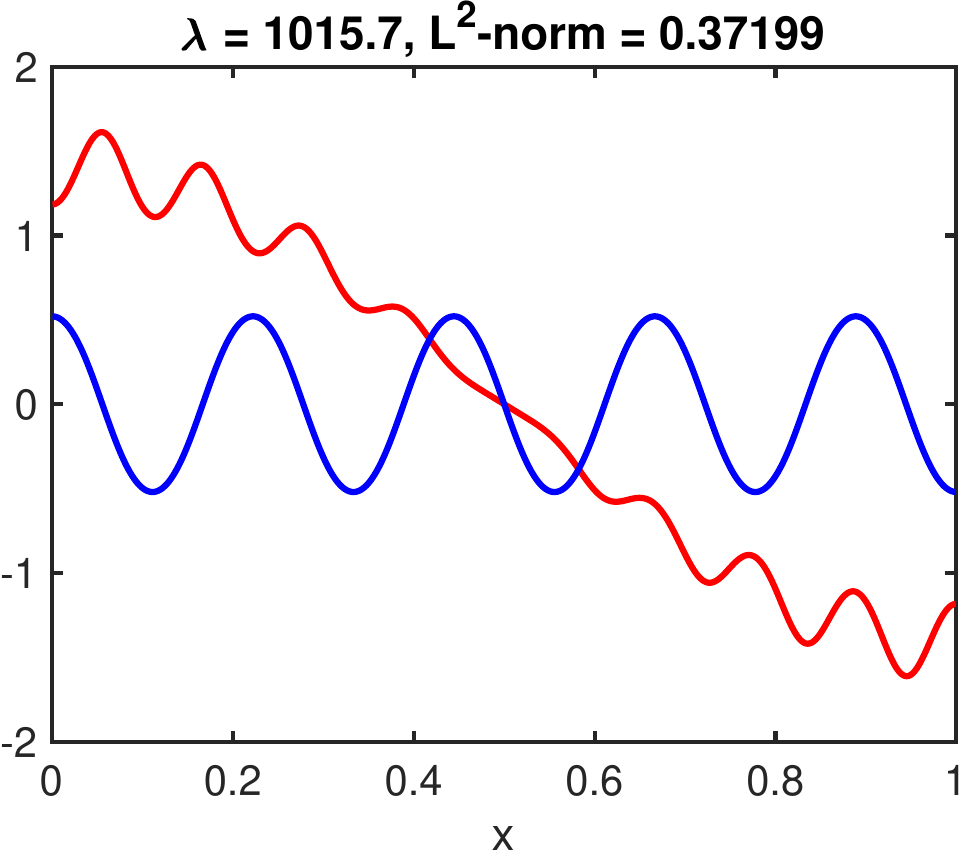}
\includegraphics[width=0.25\textwidth]{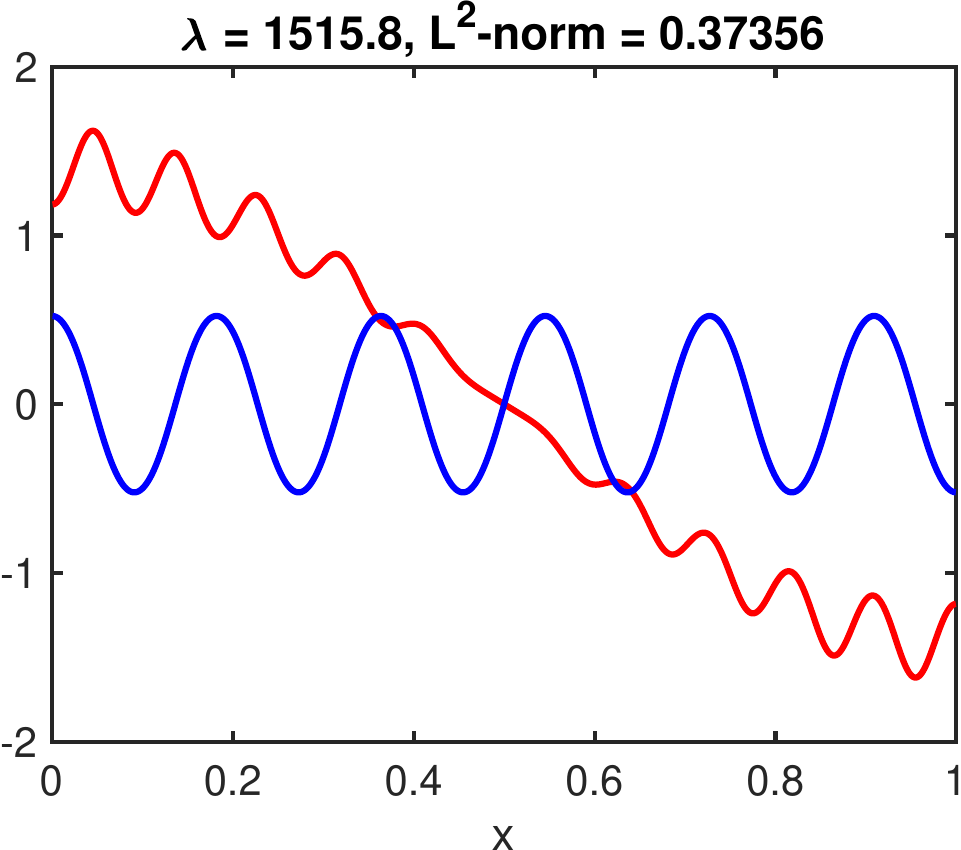}
\caption{\label{fig:oddodd} Examples of odd solutions, shown in blue, with odd
eigenfunctions, depicted in red, which correspond to eigenvalue zero at the
bifurcation point, where odd is measured with respect to the domain midpoint~$1/2$.
The solutions $u$ are $n$-layer solutions, equivariant under the cyclic
symmetry~(\ref{eqn:z2nsymm}), where $n$ is~$3$, $5$, $5$, $7$ in the top
row, and~$7$, $9$, $11$ in the bottom row, respectively. Since both the
bifurcating solution and the eigenfunction are odd, the solutions remain
odd as they undergo a pitchfork bifurcation, but bifurcation breaks the
cyclic symmetry. 
}
\end{center}
\end{figure}
\begin{figure}
\begin{center}
\includegraphics[width=0.24\textwidth]{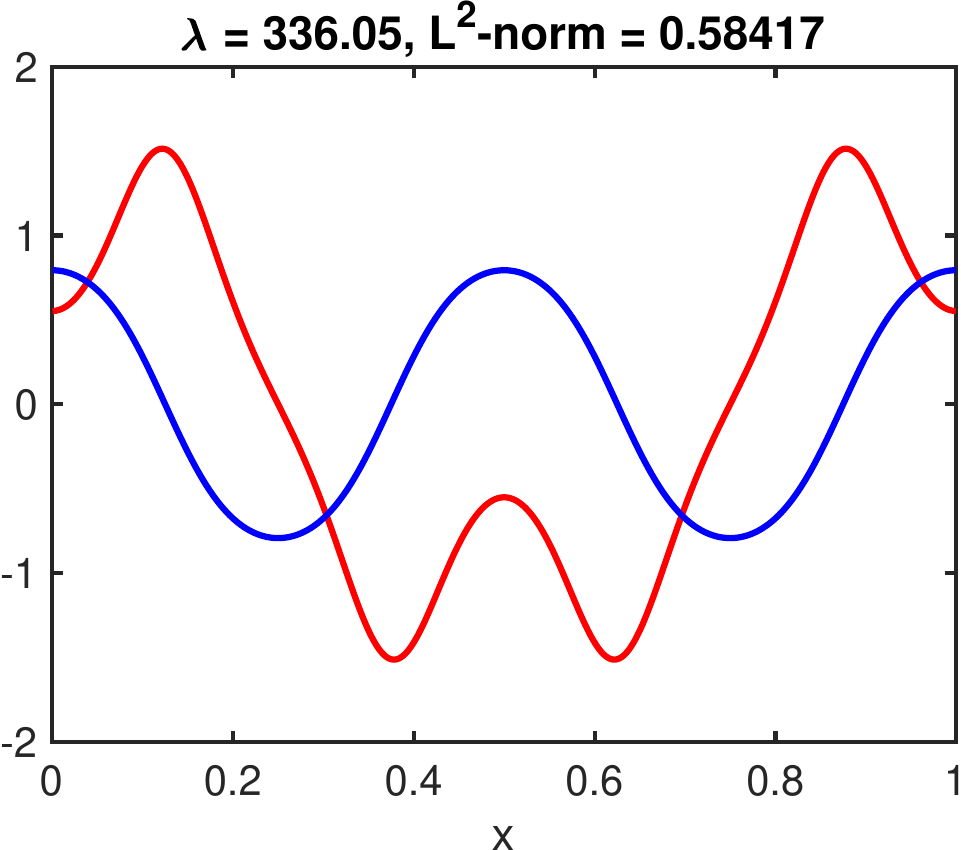}
\includegraphics[width=0.24\textwidth]{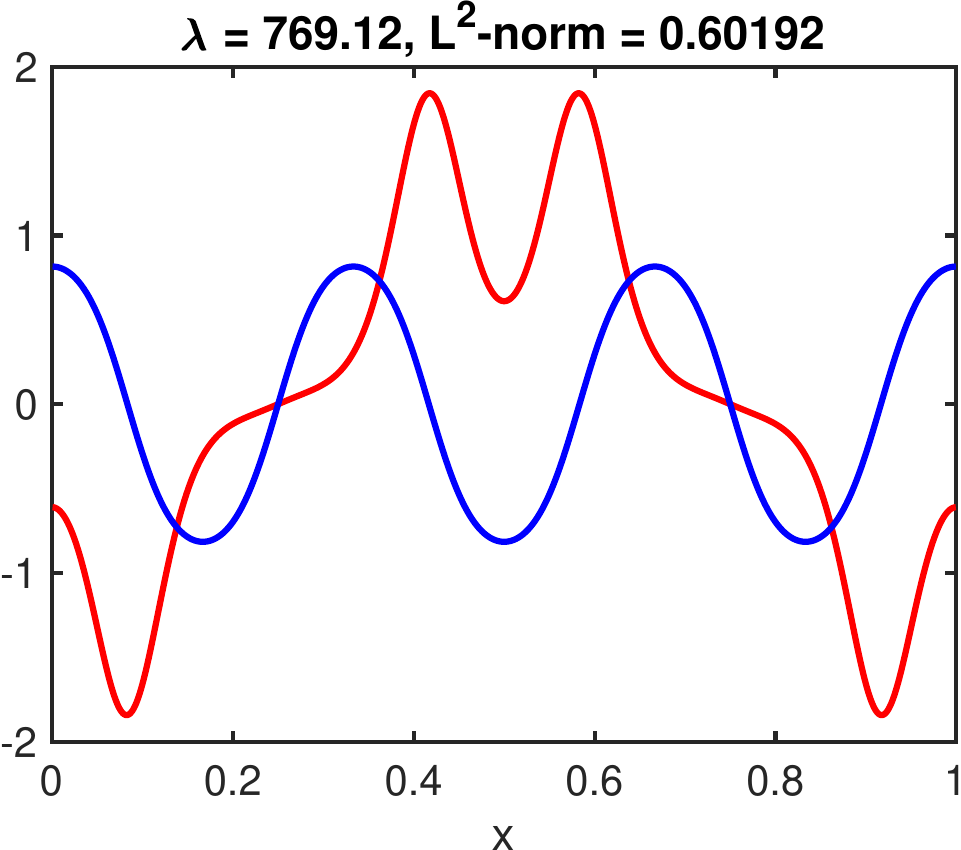}
\includegraphics[width=0.24\textwidth]{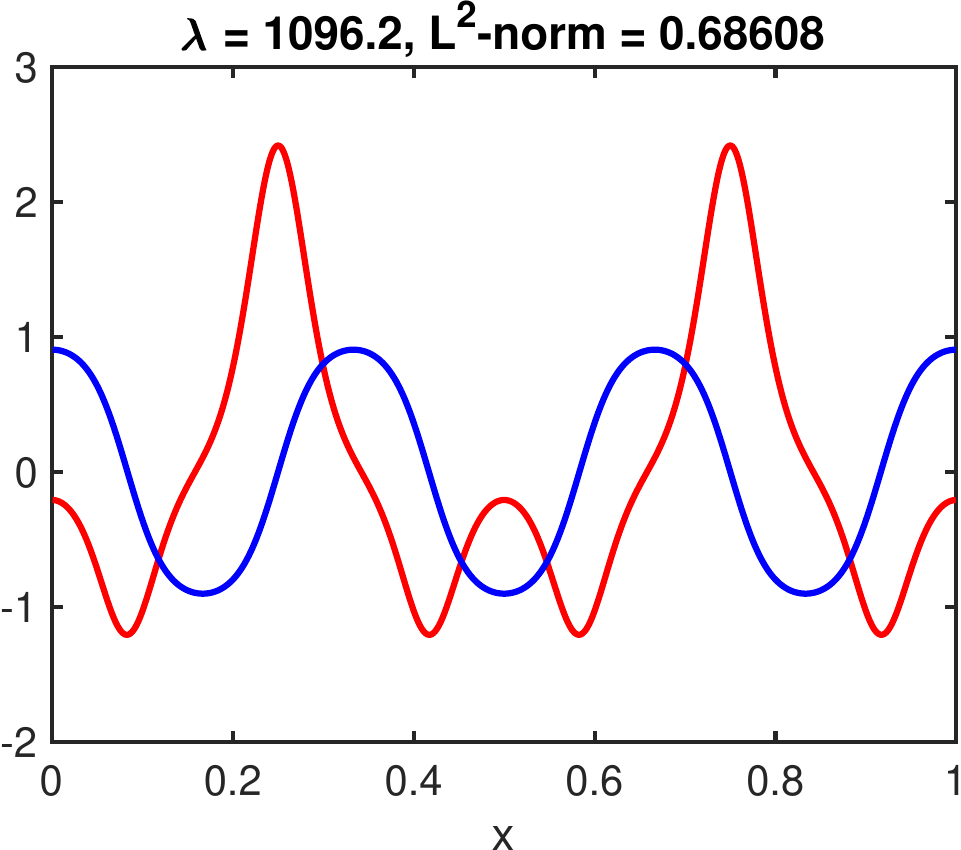}
\includegraphics[width=0.24\textwidth]{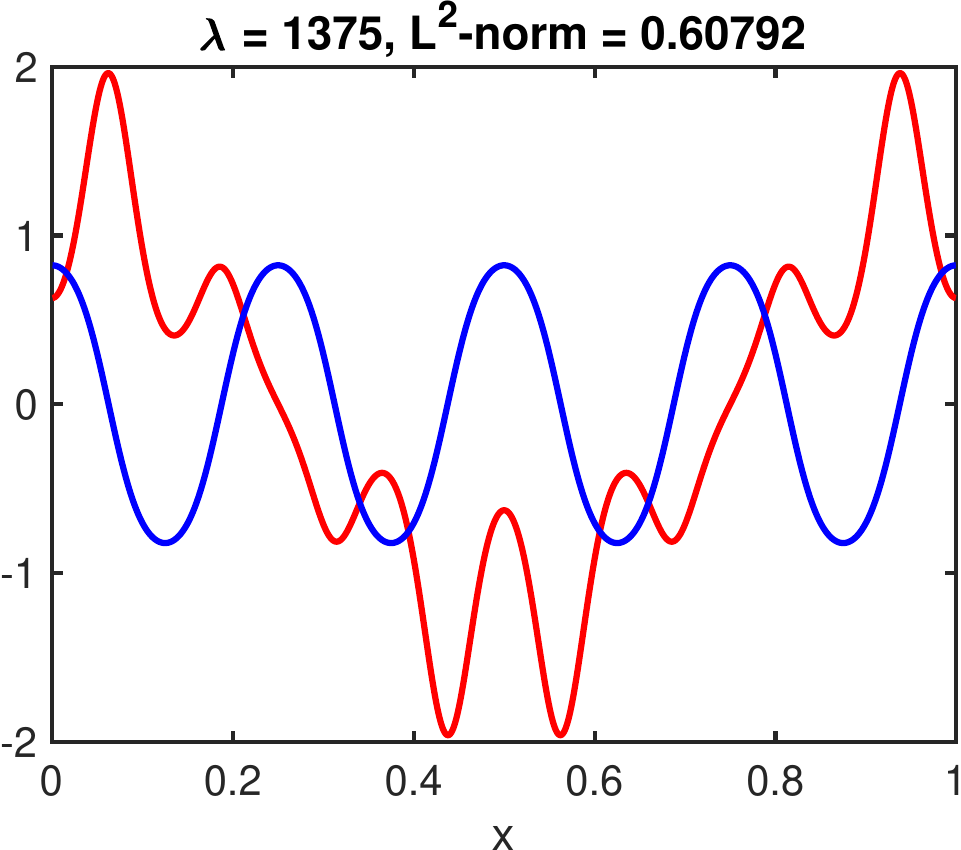}
\caption{\label{fig:eveneven} Examples of even solutions, shown in blue,
with even eigenfunctions, depicted in red, which correspond to eigenvalue
zero at the bifurcation point, where even is measured with respect to the
domain midpoint~$1/2$. In each figure, the solutions~$u$ are $n$-layer
solutions, equivariant under the cyclic symmetry~(\ref{eqn:z2nsymm}),
where $n$ is~$4$, $6$, $6$, and~$8$, respectively. Since both the
bifurcating solution and the eigenfunction are even, solutions remain
even as they undergo a pitchfork bifurcation, but the bifurcation
breaks the cyclic symmetry. 
}
\end{center}
\end{figure}

The Ohta-Kawasaki equation is a model for diblock copolymers, materials formed by 
two linear polymers (known as blocks) which contain different monomers. If the blocks
are thermodynamically incompatible, then the blocks try to separate after the reaction.
However, since they are covalently bonded, such a separation is impossible on the
macroscopic scale. This competition of long range and short range forces causes
microphase separation, resulting in pattern formation. The Ohta-Kawasaki equation 
on a domain $\Omega \subset \R^d$ is  given by  
\begin{eqnarray*}
  w_t &=& -\Delta ( \Delta w + \lambda f(w) ) - \lambda \sigma (w - \mu)
  \quad\mbox{ in } \Omega \;, \\[1.5ex]
  & & \frac{ \partial w }{\partial \nu } =
    \frac{\partial (\Delta w)}{\partial \nu }= 0
    \quad\mbox{ on } \partial \Omega \; ,
\end{eqnarray*}
where~$\nu$ denotes the unit outward normal on the boundary
of~$\Omega$, corresponding to homogeneous Neumann boundary conditions.
The quantity $w(t,x)$ is the local density difference of the two monomer blocks.
That is, if $w(t,x) = -1$, then at time~$t$ and locally near the point~$x$, the 
material consists entirely of block~A. If instead we have $w(t,x) = 1$,
then the local average of the material consists entirely of block~B. For
values $-1<w(t,x)<1$ the local material contains a mix of blocks A and B. 
The parameter $\mu$ is the space average of~$w$, meaning it is a measure
of the relative total proportion of the two polymers, which we tersely
refer to as the {\em mass} of the system.  The equation obeys a mass
conservation,  implying that~$\mu$ is time-invariant. A large value of
the parameter~$\lambda$  corresponds to a large short-range repulsion,
while a large value of the parameter~$\sigma$ corresponds to large
long-range elasticity forces. We refer the reader
to~\cite{johnson:etal:13a} for a detailed description of
how~$\lambda$ and~$\sigma$ are defined. See also~\cite{wanner:16a} for a
description of the phase separation aspects of the model. Finally, note
that the second boundary condition is necessary since this is a fourth
order equation. In this paper, we focus on equilibrium solutions $w = w(x)$. 

For notational convenience, we reformulate our equation slightly. 
For a solution~$w$ of the diblock copolymer equation, we define
$u = w - \mu$. Since the space average of~$w$ is~$\mu$, the average
of the shifted function~$u$ is zero. Therefore the equilibrium equation
becomes
\begin{eqnarray} \label{eqn:dbcp}
-\Delta ( \Delta u + \lambda f(u+\mu) ) - \lambda \sigma u  &=&
  0 \quad\mbox{ in } \Omega \;, \nonumber\\[1.5ex]
\frac{ \partial u }{\partial \nu }=  \frac{ \partial (\Delta u) }
  {\partial \nu }&=& 0 \quad\mbox{ on } \partial \Omega \;,  \\[1.5ex]
\int_\Omega u \;dx &=& 0 \nonumber \;. 
\end{eqnarray}
We will consider this version of the equation  for the rest of the paper,
and restrict our attention to the case of the one-dimensional domain
$\Omega = (0,1)$ with $\mu = 0$, where $\sigma > 0$ denotes a fixed constant,
and the nonlinearity is chosen as $f(u) = u-u^3$. Note that while this
particular form of the nonlinearity is not critical for our results, the
fact that the nonlinearity is odd plays a large role for the results 
of this paper. We would like to point out, however, that this oddness
condition was chosen purely to simplify our presentation. One could
in fact obtain similar results without it.
\begin{figure}
\begin{center}
\includegraphics[width=0.49\textwidth]{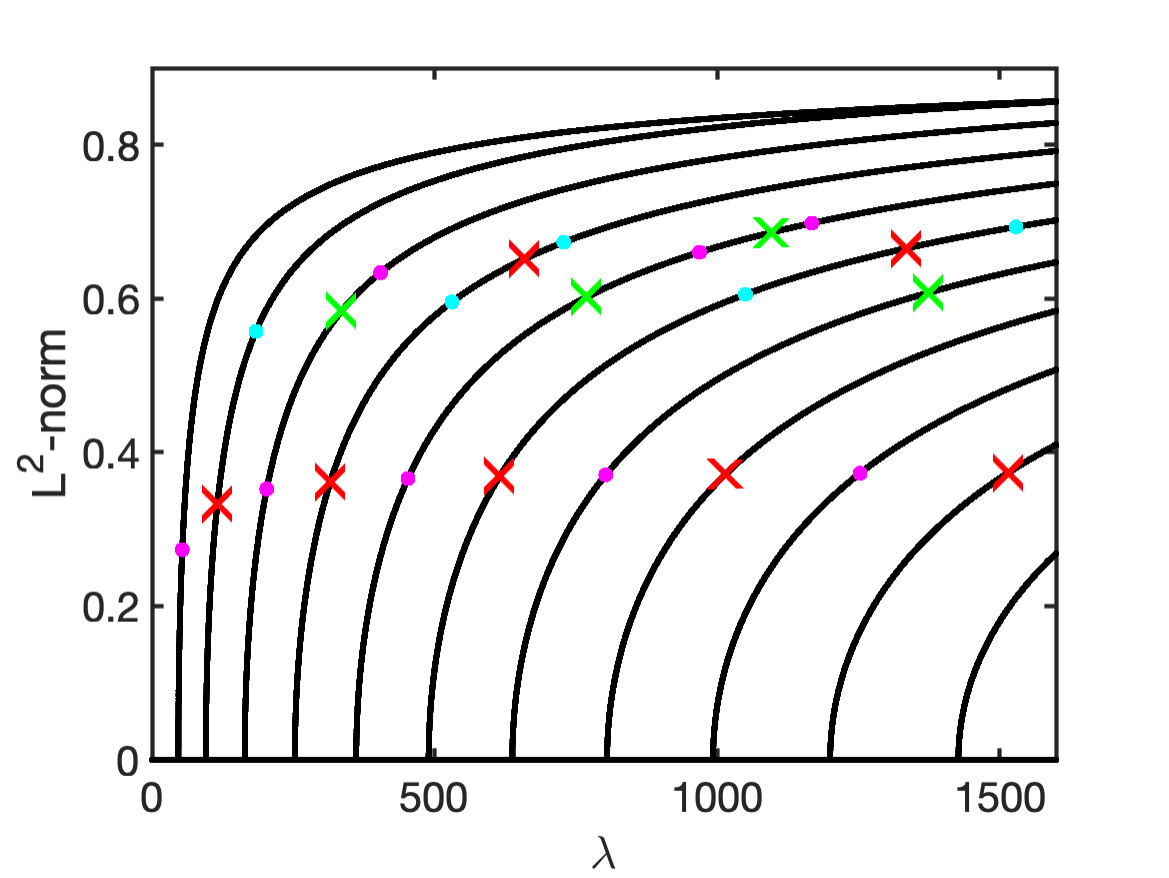}
\includegraphics[width=0.49\textwidth]{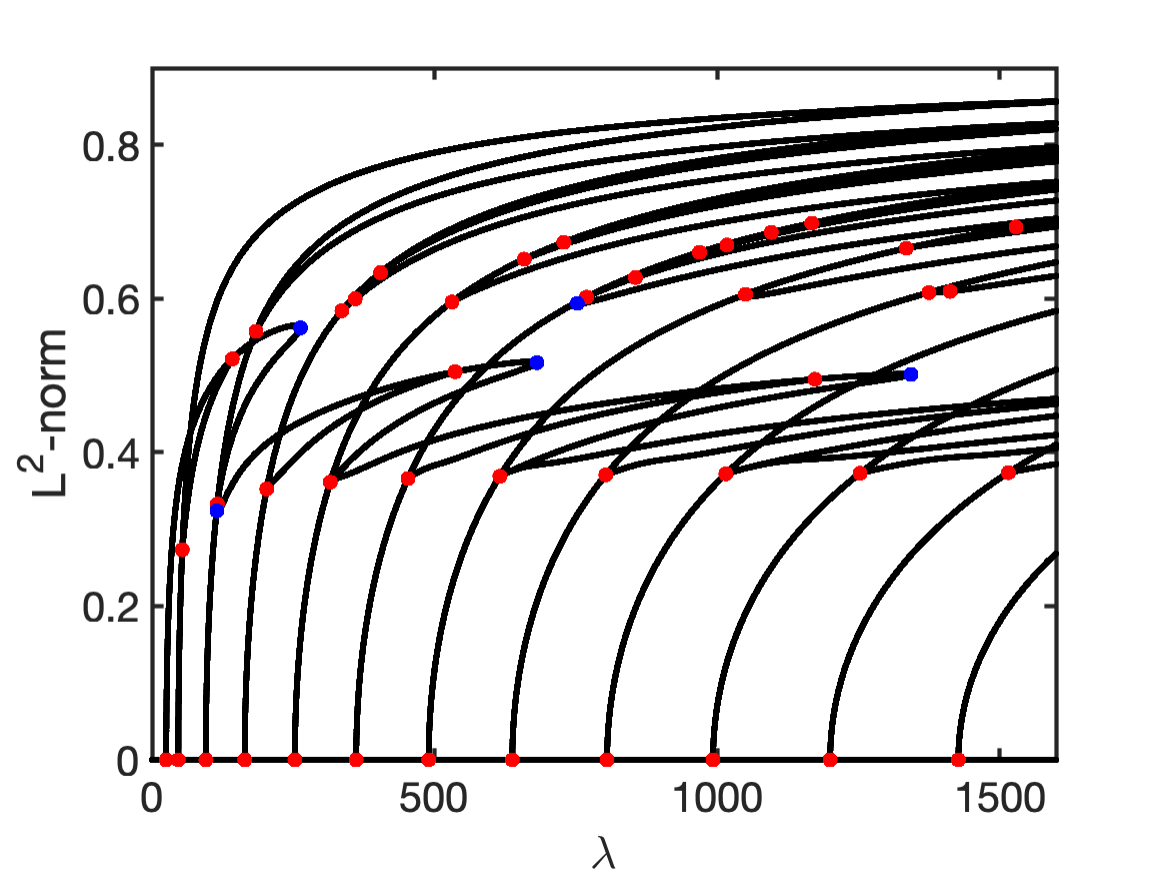}
\caption{\label{fig:bifdiag} Partial bifurcation diagram for the
one-dimensional diblock copolymer equation for the parameter values
$\mu = 0$ and $\sigma = 6$. In the left panel, each of the dots
and crosses is a bifurcation point. The red crosses correspond to
the odd solutions with odd eigenfunctions shown in Figure~\ref{fig:oddodd},
while the green crosses correspond to the even solutions with even
eigenfunctions shown in Figure~\ref{fig:eveneven}. In contrast, the
cyan dots represent odd solutions with even eigenfunctions, while the
magenta dots correspond to even solutions with odd eigenfunctions.
These last two types are $\Z_2$-symmetry breaking bifurcations points.
Altogether, the colored points depict all detected bifurcation points along
primary branches. Originating at each bifurcation point, there are
secondary branches which are omitted for the sake of clarity. They
are included, however, in the right panel, which illustrates that
the branches are connected through multiple routes.}
\end{center}
\end{figure}

Figure~\ref{fig:bifdiag} shows a numerically computed bifurcation diagram
for~(\ref{eqn:dbcp}) with $\sigma=6$. The bifurcation diagram is restricted to
the primary equilibrium branches emanating from the spatially homogeneous trivial
solution, along with the secondary bifurcation points shown as red dots. Secondary
branches do emanate from each of these branches, but they have been omitted for the
sake of clarity. Some of the depicted bifurcation points are $\Z_2$-symmetry breaking,
as covered in~\cite{lessard:sander:wanner:17a}. However, in Figures~\ref{fig:oddodd}
and~\ref{fig:eveneven} we show that for nine cases, there is no $\Z_2$-symmetry
broken at the pitchfork bifurcation. Instead, these bifurcation solutions are
$n$-layer solutions which are equivariant under the following cyclic symmetry.

Suppose that~$u$ denotes the solution at one of these bifurcation points, and
let~$\phi$ denote the eigenfunction of the Fr\'echet derivative of~(\ref{eqn:dbcp})
at~$u$ corresponding to the eigenvalue~$0$, which we further assume to be simple.
Furthermore, suppose that we have extended the solution~$u$ from~$\Omega = (0,1)$
to all of~$\R$ via successive even reflections across the boundary. Then each
such solution~$u$ satisfies the cyclic symmetry given by
\begin{equation} \label{eqn:z2nsymm}
  \left( T_n u \right)(x) \; = \; 
  -u\left( x + \frac{1}{n} \right) \; = \; u(x)
  \qquad\mbox{ for all }\qquad
  x \in \R \; ,
\end{equation}
for some $n \in \N$. In contrast, the eigenfunction~$\phi$ does not display this
type of symmetry, in fact, it seems to have no special symmetry properties at all.
We will see later that the operator~$T_n$ is the generator of a cyclic group, but
since its natural functional-analytic domain interferes with our homogeneous Neumann 
boundary conditions, we defer precise statements about the spaces on which~$T_n$
is defined and the order of the generated cyclic group until the next section.

In this paper, we develop the theoretical foundation for a rigorous computer-assisted
proof method for showing that the functions shown in Figures~\ref{fig:oddodd}
and~\ref{fig:eveneven} do indeed give rise to symmetry-breaking pitchfork bifurcations.
This is accomplished by first establishing a mathematical bifurcation result on pitchfork
bifurcations induced by a cyclic group action, and then equivalently reformulating it
as a zero-finding problem for a suitable extended nonlinear system, in the spirit
of~\cite{lessard:sander:wanner:17a}. The latter system can then in principle be solved
using computer-assisted proofs based on the constructive implicit function theorem
introduced in~\cite{sander:wanner:16a, wanner:17a}. Although in this paper we
concentrate on the Ohta-Kawasaki equation, our methods can be adapted to a much
more general set of equations. In particular, our pitchfork bifurcation result is
quite general, in that along with technical assumptions, it only relies on being
able to divide the space into the direct sum of pairwise orthogonal spaces which
exhibit certain invariance properties. One of these spaces contains
the function at bifurcation, and another contains the eigenfunction spanning the associated
one-dimensional kernel of the operator. The result does not include the specific
form~(\ref{eqn:z2nsymm}) of the symmetry in the statement, but in our application,
these pairwise orthogonal spaces come from symmetry considerations. Note that
while the solution-eigenfunction pairs shown in Figures~\ref{fig:oddodd}
and~\ref{fig:eveneven} appear to be orthogonal in the sense that
$\int_0^1 u(x) \phi(x) \, dx = 0$, there is no immediately obvious
symmetry which would force this identity. In fact, it will be shown later that
despite the lack of any obvious symmetry, each eigenfunction~$\phi$ exhibits
a more subtle one, which is far from obvious and which is responsible for this
orthogonality.

A number of papers have previously considered numerical computation of bifurcation
diagrams for the Ohta-Kawasaki and Cahn-Hilliard equations, such as for
example~\cite{choksi:etal:11a, desi:etal:11a, johnson:etal:13a, maier:etal:08a}.
There are also several decades of results on computer validation
for dynamical systems and differential equations solutions which combine fixed
point arguments and interval arithmetic, see for example~\cite{arioli:koch:10a,
gameiro:etal:08a, nakao:etal:19a, plum:95a, rump:10a, yamamoto:98a}. There are
several papers that have already considered rigorous validation of parameter-dependent
solutions for the Ohta-Kawasaki model~\cite{cai:watanabe:19a, sander:wanner:21a,
vandenberg:williams:17a}, as well as in other contexts~\cite{cyranka:wanner:18a,
kamimoto:kim:sander:wanner:22a, maier:etal:07a}. However, this is the first study
to look at computer-assisted proofs of higher symmetry-breaking bifurcations for the
Ohta-Kawasaki model. 

The remainder of this paper is organized as follows. In Section~\ref{sec:symmetry}
we describe the symmetry spaces associated with the cyclic group action given
by~(\ref{eqn:z2nsymm}), discuss their essential properties, and explain why the
underlying symmetry group responsible for the bifurcation is in fact~$\Z_{2n}$. In
Section~\ref{sec:pitbif}, we state and prove the analytical $\Z_{2n}$-equivariant
pitchfork bifurcation result. Furthermore, by reformulating the problem as a
zero-finding problem for an extended system, we also are able to establish computationally
testable conditions. We would like to point out that while the results of this section
are formulated only for the specific situation considered in this paper, the general
approach should be applicable in much more generality. This is described in more 
detail in the context of Remark~\ref{remark:metathm}, which collects the essential
assumptions that are necessary. In Section~\ref{sec:compval}, we introduce the computational
validation methods required, based on recent results from~\cite{rizzi:etal:22a}. 
This paper is primarily focused on the analysis of this new type of symmetry-breaking 
pitchfork bifurcations, but for proof of concept,  we end the paper with sample solution
validations of pitchfork bifurcation points from Figures~\ref{fig:oddodd}
and~\ref{fig:eveneven}. Nevertheless, a few computational challenges remain,
and we briefly address these as well as potential solution attempts.
%
%
\section{Cyclic Equivariance of Diblock Copolymers}
\label{sec:symmetry}
In this section we describe the equivariance properties of the equilibrium
diblock copolymer model with respect to the cyclic symmetry mentioned in
the introduction. We start in Section~\ref{subsec:symmetry1} by presenting
our basic functional-analytic setup. After briefly discussing the difficulties 
with using the cyclic symmetry defined in~(\ref{eqn:z2nsymm}) directly in
this framework, Section~\ref{subsec:symmetry2} shows that these issues can
be overcome by considering a detour through larger function spaces. In combination
with additional symmetries, one can then in fact use the symmetry operator~$T_n$
to derive a suitable symmetry-induced decomposition of the original spaces.
We also present orthogonality and invariance properties which are essential
for our application. Finally, Section~\ref{subsec:symmetry3} establishes
necessary equivariance properties of the nonlinear diblock copolymer
operator.
%
%
\subsection{Basic functional-analytic setup}
\label{subsec:symmetry1}
We begin by briefly presenting the functional-analytic framework for our
study of the diblock copolymer model, which had already been used in our
previous work~\cite{johnson:etal:13a, lessard:sander:wanner:17a,
sander:wanner:21a, wanner:17a}. As mentioned in the introduction, we
restrict consideration to equilibria for the Ohta-Kawasaki model given
in~\eqref{eqn:dbcp} on the one-dimensional domain $\Omega = (0,1)$, and
study only the zero mass case~$\mu=0$. In addition, the parameter~$\sigma$
is fixed and strictly positive, and we use the cubic odd nonlinearity
$f(u) = u-u^3$. From a functional-analytic point of view, we can then
rearrange the system of three equations in such a way that the equilibrium
solutions are zeros of a single nonlinear operator~$F$, while the boundary
and integral conditions are absorbed into the definition of the operator
domain. In this way, equilibria of the problem~\eqref{eqn:dbcp} correspond
to solutions of the zero finding problem
\begin{equation} \label{def:Flambdau}
  F(\lambda,u) =
  -\Delta ( \Delta u + \lambda f(u + \mu) ) - \lambda \sigma u = 0
  \; ,
\end{equation}
where we have $F : \R \times X \to Y$ with respect to the spaces
\begin{equation} \label{def:xyspaces}
  X = \left\{ u \in H^2(\Omega): \frac{\partial u}{\partial \nu} = 0
    \mbox{ on } \partial \Omega, \;\mbox{ and }\;
    \int_\Omega u \, dx = 0 \right\}
  \quad\mbox{ and }\quad
  Y = H^{-2}(\Omega) \; .
\end{equation}
These spaces are both Hilbert spaces, equipped for our purposes with
the norms
\begin{displaymath}
  \| u \|_X = \| \Delta u \|_{L^2(\Omega)}
  \qquad\mbox{ and }\qquad
  \| u \|_Y = \left\| \Delta^{-1} u \right\|_{L^2(\Omega)} \; ,
\end{displaymath}
where one can immediately verify that the mapping $\Delta: L^2(\Omega)
\cap \left\{ \int_\Omega u \, dx = 0 \right\} \to H^{-2}(\Omega)$
is an isometry. Standard results imply that in this setting the
operator~$F$ is a well-defined smooth operator. Furthermore, since
we assumed the identity $\mu = 0$ and~$f$ is an odd function, we also
have $F(\lambda,-u) = -F(\lambda,u)$ for all $\lambda \in \R$ and
$u \in X$.

Of particular importance for the detection of bifurcation points
is of course the Fr{\'e}chet derivative of~$F$ at a given equilibrium
solution. Thus, in the following, we consider a fixed parameter
value $\lambda_0 \in \R$ and a function $u_0 \in X$, and we let~$L$
denote the Fr\'echet derivative of~$F$ at the pair~$(\lambda_0,u_0)$
given by
\begin{equation} \label{eqn:dfu}
  L[v] \; = \;
  D_uF(\lambda_0,u_0)[v] \; = \;
  -\Delta (\Delta v + \lambda f'(u_0+\mu) v) - \lambda \sigma v
  \; .
\end{equation}
According to its definition, one has $L = D_uF(\lambda_0,u_0) \in \cL(X,Y)$,
where~$\cL(X,Y)$ denotes the Banach space of all bounded linear operators
from~$X$ to~$Y$, equipped with the operator norm~$\|\cdot\|_{\cL(X,Y)}$.
With the range and null space of this linear operator we associate the
following orthogonal projections.
\begin{definition}[Orthogonal projections~$P$ and~$Q$ associated with~$L$]
\label{def:orthproj}
Let~$L$ be the Fr\'echet derivative of the diblock copolymer operator
as defined in~\eqref{eqn:dfu}. Then we denote by~$Q : X \to X$ the orthogonal
projection of the domain~$X$ onto the null space~$N(L)$, and we let~$P : Y \to Y$
be the orthogonal projection of~$Y$ onto the orthogonal complement of the
range~$R(L)$. In other words, we define the closed subspaces $\tilde{X} \subset X$
and $\tilde{Y} \subset Y$ such that
\begin{displaymath}
  X = N(L) \oplus \tilde{X}
  \;\;\mbox{ and }\;\;
  Y = \tilde{Y} \oplus R(L) \; ,
  \quad\mbox{ where }\quad
  \tilde{X} \perp N(L)
  \;\;\mbox{ and }\;\;
  \tilde{Y} \perp R(L) \; .
\end{displaymath}
Thus, the projector $P: Y \to Y$ is characterized by $R(P) = \tilde{Y}$
and $N(P) = R(L)$, while the projection $Q: X \to X$ satisfies both
$R(Q) = N(L)$ and $N(Q) = \tilde{X}$.
\end{definition}
These projection operators allow us to apply standard arguments based on
the Lyapunov-Schmidt reduction to establish pitchfork bifurcations induced
by the action of a cyclic group.

Finally, we impose the following two assumptions on the operator~$F$ and the
pair~$(\lambda_0,u_0)$, which are standard for the discussion of bifurcation
points. The first of these has been verified for the diblock copolymer operator
in~\cite{lessard:sander:wanner:17a}, and the second one will be satisfied at
all pitchfork bifurcation points shown in Figures~\ref{fig:oddodd}
and~\ref{fig:eveneven}.
\begin{hypothesis}[Fredholm property]
\label{hyp:fred}
Assume that the nonlinear operator $F : \R \times X \to Y$ is sufficiently
smooth, and suppose that the pair~$(\lambda_0,u_0) \in \R \times X$ is a zero
of the operator~$F$, i.e., we assume that it satisfies the identity
$F(\lambda_0,u_0) = 0$. Furthermore, suppose that the Fr\'echet
derivative~$L = D_uF(\lambda_0,u_0)$ of~$F$ at~$(\lambda_0,u_0)$
is a Fredholm operator of index zero.
\end{hypothesis}
\begin{hypothesis}[One-dimensional kernel]
\label{hyp:1dk}
Suppose that the above Fredholm Hypothesis~\ref{hyp:fred} is satisfied.
In addition, assume that the Fr\'echet derivative~$L$ has a one-dimensional
null space. Since~$L$ is of index zero, this immediately implies that its range
has codimension one. Therefore, there exist nonzero elements $\phi_0 \in X$
and $\psi_0^* \in Y^*$ such that both
\begin{displaymath}
  N(L) = \mathrm{span}(\phi_0)
  \qquad\mbox{ and }\qquad
  R(L) = N(\psi_0^*)
\end{displaymath}
are satisfied. Furthermore, these assumptions show that the projections~$P$
and~$Q$ from Definition~\ref{def:orthproj} both have rank one.
\end{hypothesis}
%
%
\subsection{Space decompositions induced by cyclic symmetry}
\label{subsec:symmetry2}
We now turn our attention to studying the cyclic symmetry operator~$T_n$
defined in~\eqref{eqn:z2nsymm}. As we mentioned in the introduction, all
of the pitchfork bifurcation equilibria shown in Figures~\ref{fig:oddodd}
and~\ref{fig:eveneven} are fixed points of this operator. Note that since
the definition of~$T_n$ includes a shifted argument, we had to extend the
definition of the underlying functions beyond the bounded domain
$\Omega = (0,1)$ by even reflections. More precisely, consider for the
moment an arbitrary function $u \in X$, where the space~$X$ was defined
in~\eqref{def:xyspaces} above. In view of the imposed homogeneous Neumann
boundary conditions, we can extend the function~$u$ smoothly to a periodic
function~$\tilde{u}$ on~$\R$, by first defining
\begin{displaymath}
  \tilde{u}(x) = \left\{  
  \begin{array}{ccc}
    u(x)   & \mbox{ for }  & 0 \le x \le 1 \; , \\[1.5ex]
    u(2-x) & \mbox{ for }  & 1 < x < 2 \; ,
  \end{array} \right.
\end{displaymath}
and then using the identity~$\tilde{u}(x + 2k) = \tilde{u}(x)$
for all $ x \in [0,2]$ and $k \in \Z$. In the following, we will
refer to~$u \in X$ and~$\tilde{u} : \R \to \R$ as {\em corresponding
functions\/}, or equivalently, we will say that~{\em $\tilde{u}$ is
the extension of~$u$\/}.

With the notion of corresponding functions, it now makes sense
to apply the symmetry operator~$T_n$. One can immediately see that for
the equilibrium-eigenfunction pairs shown in Figures~\ref{fig:oddodd}
and~\ref{fig:eveneven}, the extension~$\tilde{u}$ of every solution~$u$
is indeed a fixed point of the operator~$T_n$, where~$n$ denotes the
number of layers of~$u$. On the other hand, extensions~$\tilde{\phi}$
of the eigenfunctions~$\phi$ are not fixed points of~$T_n$.

In view of these observations, it seems plausible to expect that 
standard approaches to studying symmetry-induced bifurcations should
apply directly in our situation, such as the ones described
in~\cite{chossat:lauterbach:00a, golubitsky:etal:88a}. Notice, however,
that for these approaches to work one needs to study symmetry operators
which are {\em acting on the space containing the equilibrium solutions\/}
--- and in our case this is the Hilbert space~$X$. Yet, one can easily see
that if~$\phi$ is one of the eigenfunctions in Figures~\ref{fig:oddodd}
or~\ref{fig:eveneven}, then the function~$T_n(\tilde{\phi})$ no
longer satisfies homogeneous Neumann boundary conditions on~$\Omega$.
In other words, {\em the restriction of~$T_n(\tilde{\phi})$ to~$\Omega$
is no longer an element of~$X$\/}. In addition, even in situations where
one can use abstract equivariant bifurcation theory, one still has to verify
the actual bifurcation type and the corresponding nondegeneracy conditions
in the specific underlying system, as explained in detail
in~\cite{maier:wanner:97a}.

At first glance, this observation appears to doom the use of the
symmetry operator~$T_n$. Nevertheless, we will show in the remainder
of this subsection that this is far from the truth. In fact, we will
be able to study~$T_n$ on a larger Hilbert space which contains~$X$,
but on which the action of~$T_n$ is well-defined --- and then use
the obtained insight to construct an appropriate space decomposition
of~$X$.

To introduce this larger space, we first return to the definition of
the function $\tilde{u} : \R \to \R$ corresponding to an element
$u \in X$. Notice that according to our construction this extension
satisfies $\tilde{u}(x+2) = \tilde{u}(x)$ for all $x \in \R$. Thus,
its restriction to~$(0,2)$ automatically satisfies the periodic
boundary conditions $\tilde{u}(0) = \tilde{u}(2)$ and $\tilde{u}'(0)
= \tilde{u}'(2) = 0$. Furthermore, one can immediately see that the
symmetry operator~$T_n$ defined in~\eqref{eqn:z2nsymm} maps every
$2$-periodic function to another $2$-periodic function.

With this in mind, we introduce three spaces of $2$-periodic functions,
two of which will extend our Hilbert spaces~$X$ and~$Y$. More precisely,
we consider
\begin{eqnarray}
  H^2_{per}(\R) & = & \left\{ v \in H^2_{loc}(\R) \; : \;
    v(x+2) = v(x) \;\mbox{ for all }\; x \in \R \; , \;\;
    \int_0^2 v \, dx = 0  \right\} \; , \nonumber \\[2ex]
  L^2_{per}(\R) & = & \left\{ v \in L^2_{loc}(\R) \; : \;
    v(x+2) = v(x) \;\mbox{ for all }\; x \in \R \; , \;\;
    \int_0^2 v \, dx = 0  \right\} \; , 
    \label{def:wperspaces} \\[2ex]
  H^{-2}_{per}(\R) & = & \Delta L^2_{per}(\R) \; ,
    \nonumber
\end{eqnarray}
where the space~$L^2_{loc}(\R)$ denotes the space of all measurable
real-valued functions on~$\R$ which are square integrable on compact
intervals, and~$H^2_{loc}(\R) \subset L^2_{loc}(\R)$ the space of all
twice weakly differentiable Sobolev functions whose first two derivatives
are in~$L^2_{loc}(\R)$ as well. All three of the above spaces are 
Hilbert spaces with respect to the norms
\begin{displaymath}
  \begin{array}{rclcc}
    \DS \|v\|_2 & = & \DS \| \Delta v\|_{L^2(0,2)} &
      \mbox{ for } & \DS v \in H^2_{per}(\R) \; , \\[1.5ex]
    \DS \|v\|_0 & = & \DS \| v\|_{L^2(0,2)} &
      \mbox{ for } & \DS v \in L^2_{per}(\R) \; , \\[1.5ex]
    \DS \|v\|_{-2} & = & \DS \| \Delta^{-1} v\|_{L^2(0,2)} &
      \mbox{ for } & \DS v \in H^{-2}_{per}(\R) \; ,
  \end{array}
\end{displaymath}
respectively. Since we have restricted ourselves to functions with
mean zero, one can readily verify that both mappings $\Delta :
H^2_{per}(\R) \to L^2_{per}(\R)$ and $\Delta : L^2_{per}(\R) \to
H^{-2}_{per}(\R)$ are isometries.

Our interest in these spaces is two-fold. On the one hand, they
are spaces of periodic functions which in some sense contain our
fundamental Hilbert spaces~$X$ and~$Y$. To see this, note that
for every function $u \in X$, its extension clearly satisfies
$\tilde{u} \in H^2_{per}(\R)$. However, it is not true in general
that the restriction of every function in the latter space lies
in~$X$. Nevertheless, for any $u \in X$ and $k \in \Z$, the construction of its
corresponding function~$\tilde{u}$ implies
\begin{displaymath}
  \tilde{u}(-x) =
  \tilde{u}( \underbrace{-x+2k}_{\in [0,2]} ) =
  \tilde{u}( 2 - (-x+2k) ) =
  \tilde{u}( x + 2(1-k) ) =
  \tilde{u}(x)
\end{displaymath}
for all $x \in \R$, i.e., the extension is an even function.
Conversely, one can easily see that every even function $v \in
H^2_{per}(\R)$ satisfies $v(1+x) = v(-(1+x)) = v(2 - (1+x)) =
v(1-x)$, i.e., it is also even with respect to $x = 1$. Since
every function in~$H^2_{per}(\R)$ is continuously differentiable
in view of Sobolev's embedding theorem~\cite{adams:fournier:03a},
this in turn implies that the restriction of any even function
$v \in H^2_{per}(\R)$ to~$\Omega$ satisfies homogeneous Neumann
boundary conditions, and we have shown that in fact
\begin{equation} \label{eqn:xsubw}
  X = \left\{ v|_\Omega \; : \;
    v \in H^2_{per}(\R) \;\mbox{ and }\;
    v(x) = v(-x) \;\mbox{ for all }\; x \in \R \right\}
  \; .
\end{equation}
Our second interest in the above spaces stems from the fact that
they are invariant under the symmetry operator defined in~\eqref{eqn:z2nsymm}.
More precisely, let~$W$ denote either~$H^2_{per}(\R)$ or~$L^2_{per}(\R)$.
Then we can clearly define an isometry~$T_n : W \to W$ via
\begin{displaymath}
  T_n(v)(x) = -v\left( x + \frac1n \right) \; .
\end{displaymath}
In addition, for $W = H^{-2}_{per}(\R)$ one can set $T_n(v) = \Delta
T_n(\Delta^{-1}v )$. In other words, the symmetry operator~$T_n$ is
a well-defined action on these spaces.

As we stated above, we will use the operator~$T_n$ on the spaces
of periodic functions to ultimately introduce a decomposition of
the spaces~$X$ and~$Y$ from the last subsection. For this, however,
we need to first study~$T_n$ on the former spaces. In the following,
we begin by considering the cases $W = H^2_{per}(\R)$ and
$W = L^2_{per}(\R)$, since in these cases the elements are actually
functions that can be evaluated pointwise. The Sobolev space with
negative exponent will be treated subsequently.

It is immediately clear that the operator~$T_n : W \to W$ has the
property of being cyclic of order~$2n$, and that it commutes with the
Laplacian, i.e., we have both
\begin{displaymath}
  T_n^{2n} = I
  \qquad\mbox{ and }\qquad
  \Delta T_n = T_n \Delta \; .
\end{displaymath}
The first property in particular implies that the minimal polynomial
for~$T_n$ is given by
\begin{displaymath}
  m(t) \; = \; t^{2n}-1 \; = \; (t^n-1)(t^n+1) \; = \;
  \underbrace{(t-1)}_{m_a(t)}
    \underbrace{(t^{n-1}+t^{n-2} + \dots + 1)}_{m_b(t)}
    \underbrace{(t^n+1)}_{m_c(t)} \; .
\end{displaymath}
In addition, one can verify by direct computation that $(t^n + 1)/2 -
(t^n - 1)/2 = 1$ and that~$1$ is not a root of either~$m_b$ or~$m_c$.
This in turn implies that the three polynomials~$m_a$, $m_b$, and~$m_c$
are relatively prime, and therefore we have the decomposition
\begin{equation} \label{eqn:wabc}
  \begin{array}{ccccccc}
    W & = & \DS N(T_n-I) & \oplus & \DS N(T_n^{n-1} + T_n^{n-2} +
      \dots + I) & \oplus & \DS N(T_n^n + I) \\[1.5ex]
    & = & \DS \underbrace{N(m_a(T_n))}_{W_a} & \oplus & \DS
      \underbrace{N(m_b(T_n))}_{W_b} & \oplus &
      \DS \underbrace{N(m_c(T_n))}_{W_c} \; .
  \end{array}
\end{equation}
Each of these three subspaces has additional important properties which
are crucial for our applications, and which will be studied in more
detail below. For now, we would like to point out that the space~$W_a$
consists of functions~$v$ with $v(x + 1/n) = -v(x)$ for all $x \in \R$.
Thus,  by inspection, one would suspect that for the diblock copolymer equilibrium solutions~$u$
shown in Figures~\ref{fig:oddodd} and~\ref{fig:eveneven}, their respective
corresponding functions~$\tilde{u}$ lie in~$W_a$ for $W = H^2_{per}(\R)$,
if~$n$ denotes the number of layers of~$u$. As we will see later, the
respective eigenfunctions will automatically be contained in one of the
remaining two subspaces.

The decomposition of the space~$W$ into a direct sum of three
subspaces lies at the heart of our approach, and we will now show
that this decomposition can be pulled down to the subspace~$X$.
We have already seen in~\eqref{eqn:xsubw} that the space~$X$ occurs
naturally as a subspace of~$W = H^2_{per}(\R)$ if we additionally
impose an evenness constraint. As the following result shows, this
latter constraint plays well with the decomposition~$W = W_a \oplus
W_b \oplus W_c$.
\begin{lemma}[Invariance under reflection]
\label{lem:symreflect}
Let $W = H^2_{per}(\R)$ or $W = L^2_{per}(\R)$, and suppose that~$u \in W$
is arbitrary. Furthermore, suppose that~$v \in W$ is defined via $v(x) = u(-x)$
for all $x \in \R$. Then for every $\tau \in \{ a,b,c \}$ one has the implication
\begin{displaymath}
  u \in W_\tau
  \qquad\Longrightarrow\qquad
  v \in W_\tau
  \quad\mbox{ and }\quad
  u+v \in W_\tau \; .
\end{displaymath}
\end{lemma}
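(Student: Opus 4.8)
The statement is purely about the interplay between the reflection $R: v(x)\mapsto v(-x)$ and the cyclic operator $T_n$, so the natural strategy is to figure out the commutation relation between $R$ and $T_n$ and then push it through the polynomial functional calculus that defines $W_a,W_b,W_c$. First I would compute $(R T_n R)(v)$: since $(T_n v)(x) = -v(x+1/n)$, one gets $(R T_n v)(x) = -v(-x+1/n)$ and hence $(R T_n R v)(x) = -v(x-1/n) = (T_n^{-1}v)(x)$, using $T_n^{2n}=I$. So the key algebraic fact is $R T_n R = T_n^{-1} = T_n^{2n-1}$, i.e. $R$ conjugates $T_n$ to its inverse. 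Equivalently $R T_n = T_n^{-1} R$, and more generally $R\, p(T_n)\, R = p(T_n^{-1})$ for any polynomial $p$.

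Next, I would show that each $W_\tau$ is invariant under $T_n^{-1}$ as well as under $T_n$. This is immediate: $W_a = N(T_n - I)$ is the eigenspace for eigenvalue $1$, which is obviously $T_n^{-1}$-invariant; $W_c = N(T_n^n+I)$ consists of vectors with $T_n^n v = -v$, a condition preserved by $T_n^{-1}$; and $W_b = N(m_b(T_n))$ where $m_b(t)=t^{n-1}+\dots+1$ — on $W_b$ one has $T_n^n v = v$ (since $m_b(t)(t-1)=t^n-1$ and $m_b(T_n)v=0$ forces $(T_n^n-I)v=0$... more carefully, $m_b(T_n)v=0$ plus the direct sum structure), so again $T_n^{-1}$ maps $W_b$ to itself. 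Alternatively, and more cleanly, since $T_n^{-1}=T_n^{2n-1}$ is itself a polynomial in $T_n$, every subspace defined as the kernel of a polynomial in $T_n$ is automatically $T_n^{-1}$-invariant; this avoids any case analysis.

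Combining the two facts gives the result directly. If $u\in W_\tau = N(m_\tau(T_n))$, then $m_\tau(T_n)(Ru) = R\, m_\tau(T_n^{-1})\, u$; and $m_\tau(T_n^{-1})u$ lies in $N(m_\tau(T_n))\cdot$... wait — the clean route is: $m_\tau(T_n)(Ru) = R\big(m_\tau(T_n^{-1})u\big)$, and since $T_n^{-1}$ preserves $W_\tau$ and $m_\tau(T_n)$ annihilates $W_\tau$, while $m_\tau(T_n^{-1})$ also annihilates $W_\tau$ (again because $T_n^{-1}$ is a polynomial in $T_n$ and the $W_\tau$'s are the isotypic/kernel components, each annihilated by the corresponding factor evaluated at any polynomial in $T_n$ that shares the same kernel pattern) — the crispest argument is simply: $R$ maps $N(T_n^{2n}-I)=W$ to itself and intertwines the $T_n$-action with the $T_n^{-1}$-action, and the three subspaces $W_a,W_b,W_c$ are intrinsically characterized by the eigenvalues of $T_n$ they carry ($\{1\}$, $\{$primitive-ish $n$th roots of unity $\neq 1\}$, $\{2n$th roots of $-1\}$), a set of eigenvalues that is closed under inversion $\zeta\mapsto\zeta^{-1}=\bar\zeta$ in each case. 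Hence $Ru\in W_\tau$, i.e. $v\in W_\tau$, and then $u+v\in W_\tau$ because $W_\tau$ is a linear subspace.

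**Main obstacle.** None of this is deep; the only thing requiring a moment's care is verifying $R T_n R = T_n^{-1}$ correctly (getting the sign and the argument-shift direction right) and then being slightly careful that each eigenvalue set is stable under $\zeta\mapsto\zeta^{-1}$ — trivial for $W_a$, and for $W_b,W_c$ it follows because the root sets of $m_b(t)=(t^n-1)/(t-1)$ and $m_c(t)=t^n+1$ are each closed under $\zeta\mapsto 1/\zeta$ (the $n$th roots of unity, respectively the $n$th roots of $-1$, are closed under reciprocals). I would present it via the conjugation identity $R\,p(T_n)\,R = p(T_n^{-1})$ valid for all polynomials $p$, apply it with $p=m_\tau$, and note $m_\tau$ and the reversed polynomial $t^{\deg m_\tau}m_\tau(1/t)$ have the same roots up to the factor dividing $t^{\deg}$ — in fact $m_a,m_b,m_c$ are each (up to a unit) palindromic or anti-palindromic, so $m_\tau(T_n^{-1})$ and $m_\tau(T_n)$ have the same null space, which finishes the argument without ever leaving the $W_\tau$'s.
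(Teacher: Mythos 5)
Your argument is correct, but it takes a genuinely different route from the paper. The paper proves the lemma by hand, case by case: it writes $v(x)=u(2-x)$ using $2$-periodicity, computes $T_n^k v(x)=(-1)^k u(2-x-k/n)$, and then for each $\tau\in\{a,b,c\}$ makes an explicit substitution (e.g.\ $t=2-x$ for $W_a$, the index change $\ell=n-1-k$ together with $s=2-x-1+1/n$ for $W_b$, and $s=-x$ for $W_c$) to verify the defining relation for $v$. You instead work at the operator level: you verify the dihedral relation $RT_nR=T_n^{-1}$ for the reflection $R\colon u(\cdot)\mapsto u(-\cdot)$ (your sign and shift bookkeeping is right), deduce $p(T_n)R=Rp(T_n^{-1})$ for every polynomial $p$, and then observe that each factor $m_a(t)=t-1$, $m_b(t)=t^{n-1}+\dots+1$, $m_c(t)=t^n+1$ is palindromic up to sign, so that $m_\tau(T_n^{-1})=\pm T_n^{-\deg m_\tau}m_\tau(T_n)$ and hence $N(m_\tau(T_n^{-1}))=N(m_\tau(T_n))=W_\tau$; consequently $m_\tau(T_n)(Ru)=R\,m_\tau(T_n^{-1})u=0$, and $u+v\in W_\tau$ by linearity. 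This is a complete proof, and arguably cleaner: it replaces the three pointwise computations by one algebraic identity plus a root-set observation ($\zeta\mapsto\zeta^{-1}$ stabilizes each root set), and it makes transparent why the lemma holds --- it is exactly the dihedral $D_{2n}$ structure that the paper only mentions in the remark following the lemma. The paper's computation, in exchange, stays entirely elementary and exhibits the needed identities concretely on the functions themselves. Two small presentational points: the hedged aside about $W_b$ and $T_n^n v=v$ in your middle paragraph is unnecessary once you invoke the fact that $T_n^{-1}=T_n^{2n-1}$ is a polynomial in $T_n$, and note that $T_n^{-1}$-invariance of $W_\tau$ alone is not what you need --- the step that actually closes the argument is precisely the palindromy claim that $m_\tau(T_n^{-1})$ annihilates $W_\tau$, so that should be stated as the key lemma rather than as an afterthought.
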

\begin{proof}
Notice first that we only have to establish the validity of~$v \in W_\tau$
in the above implication. According to its definition, the space~$W_\tau$
is a linear subspace, and therefore the inclusions $u, v \in W_\tau$
immediately imply $u+v \in W_\tau$ as well.

Now let~$u$ and~$v$ be given as in the formulation of the lemma. Then
the periodicity of~$u$ implies $v(x) = u(-x) = u(2-x)$, and this
in turn yields
\begin{equation} \label{lem:symreflect1}
  T^k_n v(x) =
  (-1)^k v \left(x+\frac{k}{n}\right) =
  (-1)^k u \left( 2-x-\frac{k}{n} \right)
\end{equation}
for all $x \in \R$ and $k \in \N_0$. We now distinguish between
the three cases $\tau \in \{ a,b,c \}$.

To begin with, let $u \in W_a$. Then we have~$u(t) = T_n u(t) =
-u(t + 1/n)$, and this readily implies $u(t) = -u(t-1/n)$. If one
now substitutes $t = 2-x$, then~\eqref{lem:symreflect1} gives
\begin{displaymath}
  T_n v(x) = - u\left( 2-x-\frac{1}{n} \right) = u(2-x) =
  u(-x) = v(x) \; ,
\end{displaymath}
i.e., we also have $v \in W_a$.

Consider now the case $u \in W_b$. Then the equation $\sum_{k=0}^{n-1}
T_n^k u(s) = 0$ holds for all $s \in \R$. Therefore, if we set $s =
2 - x - 1 + 1/n$, then one obtains with~\eqref{lem:symreflect1} the
identity
\begin{eqnarray*}
  \sum_{k=0}^{n-1} T_n^k v(x) & = &
    \sum_{k=0}^{n-1} (-1)^k  u\left( 2-x-\frac{k}{n} \right) \; = \;
    \sum_{\ell=0}^{n-1} (-1)^{n-1-\ell} u\left( 2-x-\frac{n-1-\ell}{n}
    \right) \\[1.5ex]
  & = & (-1)^{n-1} \sum_{\ell=0}^{n-1} (-1)^{\ell}
    u\left( s + \frac{\ell}{n} \right) \; = \;
    (-1)^{n-1} \sum_{\ell=0}^{n-1} T_n^\ell u(s) \; = \; 0 \; , 
\end{eqnarray*}
where the second equality uses the index change $\ell = n-1-k$, and
for the third one we note that $(-1)^\ell = (-1)^{-\ell}$. This shows
that $v \in W_b$.

Finally, let us assume that $u \in W_c$. Then one has $-u(s) = T^n_n u(s)
= (-1)^n u(s+1)$ for all $s \in \R$. Thus, if we set $s = -x$, then we
obtain
\begin{displaymath}
  T_n^n v(x) = (-1)^n v \left( x + 1 \right) =
  (-1)^n u \left( 2-x-1 \right) = (-1)^n u(-x+1) =
  -u(-x)  = -v(x) \; ,
\end{displaymath}
which in turn implies $v \in W_c$. This completes the proof of the lemma.
\end{proof}
\begin{remark}[Dihedral group~$D_{2n}$ action]
While our main focus so far has been to understand how the action of
the cyclic group~$\Z_{2n}$ induced by~$T_n$ on $W = H^2_{per}(\R)$ or
$W = L^2_{per}(\R)$ can be used to find a suitable space decomposition,
Lemma~\ref{lem:symreflect} illustrates another point. In addition to
the action of~$T_n$, our study also makes crucial use of the reflection
symmetry~$u(\cdot) \mapsto u(-\cdot)$ on the space~$W$. Thus, the actual
underlying symmetry group is the resulting dihedral group~$D_{2n}$
acting on~$W$.
\end{remark}
The above result shows that the spaces in the decomposition of~$W$ are
invariant under the reflection~$x \mapsto -x$. This leads us immediately
to the following result, which further decomposes every~$W_\tau$ into
even and odd functions.
\begin{lemma}[Even and odd decomposition]
\label{lem:evenoddWdecomp}
Let $W = H^2_{per}(\R)$ or $W = L^2_{per}(\R)$, and define the 
two subspaces $W^e = \{ u \in W : \mbox{$u$ is even} \}$ and
$W^o = \{ u \in W : \mbox{$u$ is odd} \}$ of~$W$ consisting of
all even and odd functions, respectively. Then we have $W =
W^e \oplus W^o$, as well as
\begin{displaymath}
  W_\tau =
    \left( W_\tau \cap W^e \right) \oplus
    \left( W_\tau \cap W^o \right)
  \qquad\mbox{ for all }\qquad
  \tau \in \{ a,b,c \} \; .
\end{displaymath}
Finally, functions in~$W^e$ are even with respect to both~$x = 0$
and with respect to~$x = 1$, while functions in~$W^o$ are odd
with respect to both of these $x$-values.
%
%
%
\end{lemma}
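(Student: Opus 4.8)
The plan is to combine the standard splitting of a function into its even and odd parts with the reflection-invariance already established in Lemma~\ref{lem:symreflect}. For an arbitrary $u \in W$, set $u^e(x) = \tfrac12\bigl(u(x) + u(-x)\bigr)$ and $u^o(x) = \tfrac12\bigl(u(x) - u(-x)\bigr)$, so that $u = u^e + u^o$ with $u^e \in W^e$ and $u^o \in W^o$. The first thing to verify is that these two parts genuinely lie in $W$: the reflected function $x \mapsto u(-x)$ inherits $2$-periodicity from $u$ (since $u(-x) = u(2-x)$ by periodicity), has the same Sobolev regularity as $u$, and satisfies $\int_0^2 u(-x)\,dx = \int_0^2 u(x)\,dx = 0$ after the substitution $x \mapsto -x$ combined with periodicity; hence the same holds for the combinations $u^e$ and $u^o$. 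Since any function that is simultaneously even and odd must vanish, we have $W^e \cap W^o = \{0\}$, and therefore $W = W^e \oplus W^o$.

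For the refined decomposition, fix $\tau \in \{a,b,c\}$ and take $u \in W_\tau$. Define $v(x) = u(-x)$; by Lemma~\ref{lem:symreflect} we have $v \in W_\tau$. Since $W_\tau$ is a linear subspace, it follows that $u^e = \tfrac12(u+v) \in W_\tau$ and $u^o = \tfrac12(u-v) \in W_\tau$, so that $u^e \in W_\tau \cap W^e$ and $u^o \in W_\tau \cap W^o$. This establishes $W_\tau = (W_\tau \cap W^e) + (W_\tau \cap W^o)$, and the sum is direct because $(W_\tau \cap W^e) \cap (W_\tau \cap W^o) \subseteq W^e \cap W^o = \{0\}$.

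It then remains to record the symmetry about $x = 1$, which is a direct consequence of $2$-periodicity and was already observed in the discussion leading to~\eqref{eqn:xsubw}: if $u \in W^e$ then $u(1+x) = u(-(1+x)) = u(2-(1+x)) = u(1-x)$, while if $u \in W^o$ then $u(1+x) = -u(-(1+x)) = -u(1-x)$.

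I do not expect a genuine obstacle here: once Lemma~\ref{lem:symreflect} is available, the argument is essentially formal. The only point that needs any attention at all is checking that passing to even and odd parts preserves the mean-zero and periodicity constraints built into the definition of $W$, and this is immediate from the periodicity identity $u(-x) = u(2-x)$. The substantive content — that the reflection $x\mapsto -x$ maps each $W_\tau$ into itself — has already been done.
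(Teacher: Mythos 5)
Your proof is correct and follows essentially the same route as the paper: the standard even/odd splitting $u = u^e + u^o$, the observation that Lemma~\ref{lem:symreflect} places both parts in $W_\tau$, directness from $W^e \cap W^o = \{0\}$, and the periodicity computation for the symmetry about $x=1$. The extra check that reflection preserves the periodicity and mean-zero constraints of $W$ is a harmless addition the paper leaves implicit.
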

\begin{proof}
It is well-known that every function~$u$ defined on~$\R$ can be
written as the sum of an even and an odd function in the form
$u = u^e + u^o$, where the even and odd parts are explicitly
given by $u^e(x) = (u(x)+u(-x))/2$ and $u^o(x) = (u(x)-u(-x))/2$,
respectively. Thus, in view of Lemma~\ref{lem:symreflect} we have
both~$u^e \in W_\tau$ and $u^o \in W_\tau$, as long as $u \in W_\tau$.
Since only the zero function is both even and odd, this implies
the decompositions stated in the lemma. The statement concerning
the evenness of every $u \in W^e$ with respect to~$x = 1$ has already
been shown in the verification of~\eqref{eqn:xsubw}. Finally, for
$v \in W^o$ one obtains
\begin{displaymath}
  v(1-x) = -v(x-1) = -v(x-1+2) = -v(1+x)
  \quad\mbox{ for all }\quad
  x \in \R \; ,
\end{displaymath}
and this completes the proof of the lemma.
\end{proof}

\medskip
As we already showed in~\eqref{eqn:xsubw}, the space~$X$
defined in~\eqref{def:xyspaces} can be considered as a subspace
of~$W = H^2_{per}(\R)$ in the sense that $u \in X$ if and only if
its extension~$\tilde{u}$ is an even function in~$H^2_{per}(\R)$.
Thus, the above lemma allows us to pull the space decomposition
defined in~\eqref{eqn:wabc} down to the space~$X$, by considering
only the even functions in the spaces~$W_a$, $W_b$, and~$W_c$.
More precisely, we have the following definition.
\begin{definition}[Symmetry induced space decomposition of~$X$]
\label{def:abcx}
Let $W = H^2_{per}(\R)$ denote the space defined
in~\eqref{def:wperspaces}, and let~$X$ be defined as
in~\eqref{def:xyspaces}. Then we define three subspaces
of~$X$ by considering only the even corresponding functions
in the subspaces~$W_a$, $W_b$, and~$W_c$ defined in~\eqref{eqn:wabc}
for some integer~$n \in \N$, i.e., we set 
\begin{displaymath}
  X_\tau = \left\{ u \in X \; : \;
    \tilde{u} \in W_\tau \right\}
  \quad\mbox{ for all }\quad
  \tau \in \{ a,b,c \}
\end{displaymath}
in view of~\eqref{eqn:xsubw}. Notice that
Lemma~\ref{lem:evenoddWdecomp} immediately implies
$X = X_a \oplus X_b \oplus X_c$.
\end{definition}
With the above definition we have achieved our first goal, namely,
the derivation of a decomposition of our domain~$X$ that is in some
sense induced by the symmetry~$T_n$, and that allows us to discuss
symmetry-breaking pitchfork bifurcations. Based on our derivation,
one would suspect that the pitchfork bifurcation equilibria shown
in Figures~\ref{fig:oddodd} or~\ref{fig:eveneven} are contained
in the spaces~$X_a$ --- and we still need to understand why the
spaces~$X_b$ and~$X_c$ are the correct spaces to include the
eigenfunctions. We would like to emphasize one more time, however,
that while in some sense~$X_a$ is invariant under the
symmetry~$T_n$ (via corresponding functions), this is not true
for the spaces~$X_b$ and~$X_c$.
\begin{table}
  \begin{center}
  \begin{tabular}{|c|c|c|} \hline
    & $v \in X_a \oplus X_b$ & $v \in X_c$ \\ \hline
    $n$ even & $v$ even with respect to $x=1/2$ & 
      $v$ odd with respect to $x=1/2$ \\[0.5ex]
    $n$ odd & $v$ odd with respect to $x=1/2$ & 
      $v$ even with respect to $x=1/2$ \\ \hline
  \end{tabular}
  \end{center}
  \caption{\label{table:wtauaddsym}
    Additional symmetries of functions in the spaces~$X_a$, $X_b$,
    and~$X_c$ introduced in Definition~\ref{def:abcx}. Depending on
    whether the underlying integer~$n \in \N$ is even or odd, functions
    in the spaces $X_a \oplus X_b$ and~$X_c$ have an additional even or
    odd symmetry with respect to the center point $x = 1/2$ of the domain
    $\Omega = (0,1)$, as listed in the above table.}
\end{table}

In addition to the symmetry properties discussed so far, functions
in the spaces~$X_a$, $X_b$, and~$X_c$ introduced in Definition~\ref{def:abcx}
exhibit one more symmetry. This is the subject of the following simple lemma.
\begin{lemma}[Symmetry with respect to $x = 1/2$]
\label{lem:symmx12}
Consider the spaces~$X_a$, $X_b$, and~$X_c$ introduced in
Definition~\ref{def:abcx}. Then functions in the direct
sum~$X_a \oplus X_b$ are even or odd with respect to the
center point $x = 1/2$ of the domain~$\Omega = (0,1)$ if the 
integer~$n \in \N$ is even or odd, respectively. In addition,
functions in~$X_c$ are odd or even with respect to $x = 1/2$
if~$n$ is even or odd, respectively. This is summarized in
Table~\ref{table:wtauaddsym}.
\end{lemma}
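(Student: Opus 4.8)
The plan is to work at the level of the periodic extensions and translate the $W_\tau$-membership conditions into evenness/oddness with respect to $x=1/2$. First I would fix $n\in\N$ and recall from Definition~\ref{def:abcx} that every $u\in X_\tau$ corresponds to an even function $\tilde u\in W_\tau$ for $W=H^2_{per}(\R)$, so it suffices to show that an even $2$-periodic function lying in $W_a$, $W_b$, or $W_c$ has the claimed symmetry about $x=1/2$. Being even about $x=1/2$ means $\tilde u(1/2+t)=\tilde u(1/2-t)$ for all $t$, equivalently $\tilde u(x)=\tilde u(1-x)$ for all $x\in\R$; being odd about $x=1/2$ means $\tilde u(x)=-\tilde u(1-x)$. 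Since $\tilde u$ is already even about $x=0$, i.e.\ $\tilde u(-x)=\tilde u(x)$, the symmetry about $x=1/2$ is equivalent to a statement about the value $\tilde u(1-x)$ versus $\tilde u(x-1)$, which is where the $W_\tau$-relations enter.

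The heart of the argument is an iteration of the defining relation of each subspace. For $u\in W_a$ we have $\tilde u(x+1/n)=-\tilde u(x)$; applying this $n$ times gives $\tilde u(x+1)=(-1)^n\tilde u(x)$, so $\tilde u(1-x)=\tilde u((-x)+1)=(-1)^n\tilde u(-x)=(-1)^n\tilde u(x)$ using evenness about $x=0$. Hence $u$ is even about $x=1/2$ when $n$ is even and odd about $x=1/2$ when $n$ is odd, which is exactly the $X_a$ part of the table. For $W_c$ the defining relation is $\sum_{k=0}^{n}$-free; rather $T_n^n+I$ annihilates $u$, i.e.\ $\tilde u(x+1)=(-1)^{n+1}\tilde u(x)$ (since $T_n^n u(x)=(-1)^n\tilde u(x+1)$ and this must equal $-\tilde u(x)$), so $\tilde u(1-x)=(-1)^{n+1}\tilde u(x)$ and the parity about $x=1/2$ is the opposite of the $W_a$ case, matching the $X_c$ column. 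The one genuinely less mechanical case is $W_b$: here the hypothesis is only $\sum_{k=0}^{n-1}T_n^k u=0$, which does not by itself pin down $\tilde u(x+1)$. The resolution is to combine the $W_b$ relation with the ambient decomposition: on $W_b$ the operator $T_n$ satisfies $m_b(T_n)=0$ with $m_b(t)=t^{n-1}+\dots+1$, and since $(t-1)m_b(t)=t^n-1$, we get $T_n^n=I$ on $W_b$, hence $\tilde u(x+1)=(-1)^n\tilde u(x)$ exactly as in the $W_a$ case. Therefore $X_a$ and $X_b$ behave identically with respect to $x=1/2$, which is why the table groups $X_a\oplus X_b$ together, and the direct-sum statement then follows by linearity since even and odd parts about $x=1/2$ are preserved by the decomposition $X=X_a\oplus X_b\oplus X_c$.

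The main obstacle, and the step that deserves the most care in writing, is precisely the $W_b$ case just described: one must not try to read a pointwise translation identity directly off the relation $\sum_{k=0}^{n-1}T_n^k u=0$, but instead observe that on the subspace $W_b$ the operator $T_n$ has order dividing $n$ (because $m_b$ divides $t^n-1$), so that $T_n^n$ acts as the identity there. Equivalently, one can note $W_b\subset N(T_n^n-I)$ and argue from $T_n^n u(x)=(-1)^n\tilde u(x+1)=\tilde u(x)$. After that, the computations for $W_a$ and $W_c$ are routine iterations of the respective defining relations together with evenness about $x=0$, and the final assembly for $X_a\oplus X_b$ and the summary in Table~\ref{table:wtauaddsym} is immediate. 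I would also remark in passing that the same argument goes through verbatim for $W=L^2_{per}(\R)$, since nothing beyond $2$-periodicity, evenness about $x=0$, and the $W_\tau$-relations is used; Sobolev regularity plays no role here.
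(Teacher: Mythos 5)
Your proof is correct and follows essentially the same route as the paper: reduce each case to the translation identity $\tilde u(x+1)=\pm\tilde u(x)$ (via $T_n^n=I$ on $W_a\oplus W_b$, using $m_a m_b = t^n-1$, and $T_n^n=-I$ on $W_c$) and combine it with evenness about $x=0$ to get the stated parity about $x=1/2$. Your separate treatment of $W_a$ by iteration and $W_b$ by the divisibility $m_b\mid t^n-1$ is just a mild unpacking of the paper's direct use of $W_a\oplus W_b=N(T_n^n-I)$.
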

\begin{proof}
Consider first the case $v \in X_a \oplus X_b$, and let~$\tilde{v}$
denote its corresponding function in~$W$, which according to
Definition~\ref{def:abcx} and~\eqref{eqn:wabc} is contained
in~$W_a \oplus W_b = N(T_n^n - I)$. Thus, the extension~$\tilde{v}$
satisfies the identity $T_n^n \tilde{v} = \tilde{v}$, and by iterating
the definition of~$T_n$ one can easily see that this is equivalent to
the identity $(-1)^n \tilde{v}(1 + x) = \tilde{v}(x)$ for all $x \in \R$.
If we then replace~$x$ by~$x - 1/2$, this immediately implies
\begin{displaymath}
  (-1)^n \tilde{v} \left( \frac12 + x \right) =
    \tilde{v} \left( x -\frac12 \right) =
    \tilde{v} \left( \frac12 - x \right)
  \quad\mbox{ for all }\quad
  x \in \R \; ,
\end{displaymath}
where for the second identity we use the fact that~$\tilde{v}$
is even. This establishes the first half of the lemma. By a completely
analogous argument, using the fact that for $v \in X_c$ its extension
satisfies $\tilde{v} \in W_c = N(T_n^n + I)$, one can easily verify the
second half as well. All that changes is the introduction of an additional
negative sign, which is responsible for the switch between even and odd in
this case.
\end{proof}

\medskip
We would like to point out explicitly that, in view of the above lemma,
for any given integer~$n \in \N$ either $X_a \oplus X_b$ contains even
functions with respect to~$x = 1/2$ and~$X_c$ contains odd ones, or
vice versa. We will see later that this fact is inherently responsible
for the $\Z_2$ symmetry-breaking results of~\cite{lessard:sander:wanner:17a}.
As it turns out, the further decomposition into~$X_a$ and~$X_b$ allows us
to treat the bifurcation points shown in Figures~\ref{fig:oddodd}
and~\ref{fig:eveneven}.

As our final result concerning the decomposition of the space~$X$ we
now show that the spaces~$X_a$, $X_b$, and~$X_c$ are pairwise orthogonal.
In fact, the following result implies that they are orthogonal with respect
to a variety of possible inner products on~$X$.
\begin{lemma}[Orthogonality of the $X$-decomposition]
\label{lem:pairwiseorthogonal}
The spaces~$X_a$, $X_b$, and~$X_c$ introduced in Definition~\ref{def:abcx}
are pairwise orthogonal with respect to the inner product
\begin{displaymath}
  (\Delta u, \Delta v)_{L^2(\Omega)} +
    \beta (\nabla u, \nabla v)_{L^2(\Omega)} +
    \gamma (u,v)_{L^2(\Omega)}
  \qquad\mbox{ for }\qquad
  u,v \in X \; ,
\end{displaymath}
for any choice of constants $\beta \ge 0$ and $\gamma \ge 0$.
\end{lemma}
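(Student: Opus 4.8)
The strategy is to reduce the claimed orthogonality on $X$ to orthogonality of the corresponding $2$-periodic extensions in $W = H^2_{per}(\R)$, and then exploit the fact that $W_a, W_b, W_c$ are kernels of polynomials in the \emph{isometry} $T_n$. First I would observe that each of the three bilinear forms appearing in the inner product, namely $(\Delta u, \Delta v)_{L^2(\Omega)}$, $(\nabla u, \nabla v)_{L^2(\Omega)}$, and $(u,v)_{L^2(\Omega)}$, can be rewritten in terms of the extensions as $\tfrac12 (\Delta \tilde u, \Delta \tilde v)_{L^2(0,2)}$, $\tfrac12 (\nabla \tilde u, \nabla \tilde v)_{L^2(0,2)}$, and $\tfrac12 (\tilde u, \tilde v)_{L^2(0,2)}$, respectively, because $\tilde u, \tilde v$ are even and $2$-periodic, so that their restrictions to $(1,2)$ contribute exactly the same integral as their restrictions to $(0,1)$. (One should note for the first form that the boundary terms in the integration-by-parts identifications vanish thanks to the homogeneous Neumann conditions, but in fact this is not even needed since everything can be phrased directly on $(0,2)$.) Hence it suffices to show that $W_a$, $W_b$, $W_c$ are pairwise orthogonal in $L^2(0,2)$, and likewise with $\nabla$ and $\Delta$ inserted; positivity of $\beta, \gamma$ then makes the full inner product a nonnegative combination, so orthogonality in each piece gives orthogonality overall.

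Next, the key algebraic point: since $T_n$ is an isometry on each of $H^2_{per}(\R)$ and $L^2_{per}(\R)$ (as noted in the excerpt) and $T_n^{2n} = I$, the operator $T_n$ is \emph{unitary} on the relevant $L^2$-type Hilbert space, hence normal, and its eigenspaces for distinct eigenvalues are orthogonal. The spaces $W_a = N(m_a(T_n))$, $W_b = N(m_b(T_n))$, $W_c = N(m_c(T_n))$ are precisely the spans of eigenspaces of $T_n$ corresponding to the roots of $m_a, m_b, m_c$, respectively, and these three polynomials have pairwise disjoint root sets (the root of $m_a$ is $1$; the roots of $m_b$ are the nontrivial $n$-th roots of unity; the roots of $m_c$ are the $2n$-th roots of unity that are not $n$-th roots of unity). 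Therefore $W_a \perp W_b$, $W_a \perp W_c$, $W_b \perp W_c$ in whichever of the $L^2_{per}(\R)$-inner products we use. Concretely: for the $(u,v)_{L^2(0,2)}$ form this is unitarity of $T_n$ on $L^2_{per}(\R)$; for the $(\nabla u, \nabla v)_{L^2(0,2)}$ and $(\Delta u, \Delta v)_{L^2(0,2)}$ forms one uses in addition that $T_n$ commutes with $\Delta$ (and hence with $\nabla$, up to the obvious sign-free argument on periodic functions), so that $\nabla$ and $\Delta$ map $W_\tau$ (as subspaces of $H^2_{per}$) into the analogous eigenspace decomposition of $L^2_{per}$, preserving the $\tau$-label; orthogonality then follows from the same unitarity argument applied one level down. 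Since restricting to even functions only passes to subspaces (Lemma~\ref{lem:evenoddWdecomp} and Definition~\ref{def:abcx}), the pulled-down spaces $X_a, X_b, X_c$ inherit the orthogonality.

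A clean way to package the eigenspace argument without invoking spectral theory is the standard ``idempotent'' trick: since $m_a, m_b, m_c$ are pairwise coprime with product $t^{2n}-1$, there are polynomials giving rise to complementary projections $\Pi_a, \Pi_b, \Pi_c$ onto $W_a, W_b, W_c$, each a polynomial in $T_n$. Because $T_n$ is unitary, $T_n^* = T_n^{-1} = T_n^{2n-1}$ is again a polynomial in $T_n$, so each $\Pi_\tau$ is a polynomial in $T_n$ and $T_n^*$; a direct computation shows $\Pi_\tau$ is self-adjoint (one checks $\Pi_\tau^* = \Pi_\tau$ using $\overline{\zeta} = \zeta^{-1}$ for roots of unity $\zeta$, equivalently using that the three polynomials have real integer coefficients and the root sets are closed under $\zeta \mapsto \zeta^{-1}$). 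Self-adjoint idempotents with $\Pi_\sigma \Pi_\tau = 0$ for $\sigma \neq \tau$ give orthogonal ranges, which is exactly the claim. The main obstacle, and the only place genuine care is needed, is the bookkeeping that links the three geometrically-defined inner products on $X$ (those involving $\Delta$, $\nabla$, and the identity) to the single $L^2(0,2)$ pairing on extensions and the commutation of $\Delta$ with $T_n$ and with the projections $\Pi_\tau$; once that is set up, the self-adjointness of $\Pi_\tau$ does all the work simultaneously for every admissible $\beta \ge 0$, $\gamma \ge 0$. Everything else is routine.
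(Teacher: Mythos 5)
Your proposal is correct, but it proves the lemma by a genuinely different route than the paper does. The paper argues directly on $\Omega=(0,1)$ with a case split: for any pair involving $X_c$ it invokes Lemma~\ref{lem:symmx12}, so the product $uv$ is odd about $x=1/2$ and its integral vanishes; for the pair $X_a$, $X_b$ it splits $(0,1)$ into $n$ subintervals and uses $T_n^k\tilde u=\tilde u$ together with $\sum_{k=0}^{n-1}T_n^k\tilde v=0$ (a group-averaging computation); the $(\Delta u,\Delta v)$ term is handled by commutation of $\Delta$ with $T_n$ and preservation of the midpoint parity, and the $(\nabla u,\nabla v)$ term by integration by parts to $-(\Delta u,v)$ using the Neumann conditions. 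You instead pass to the doubled periodic interval, note that $T_n$ is an orthogonal operator with $T_n^{2n}=I$, and deduce that the primary components $W_a,W_b,W_c$ are ranges of self-adjoint polynomial projections (e.g.\ $\frac1{2n}\sum_k T_n^k$ and $\frac12(I\pm T_n^n)$, using $T_n^*=T_n^{2n-1}$), hence pairwise orthogonal in $L^2(0,2)$; commutation of $T_n$ with $\partial_x$ and $\Delta$ then transfers this simultaneously to all three bilinear forms, and halving gives the claim on $(0,1)$. Your argument is more uniform — no case distinction, no appeal to Lemma~\ref{lem:symmx12}, no integration by parts — and it extends automatically to any $T_n$-commuting symmetric form; the paper's computation is more elementary and makes explicit the role of the $x=1/2$ symmetry, which it reuses elsewhere (Table~\ref{table:wtauaddsym} and the link to the $\Z_2$ results). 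Two small points you should make explicit in a full write-up: if you phrase the key step via eigenspaces of $T_n$ you need a complexification of the real Hilbert space (your idempotent packaging avoids this cleanly and is the better choice), and when transferring to the gradient and Laplacian terms you should note that $\tilde u'$ and $\Delta\tilde u$ again have zero mean over $(0,2)$ by periodicity, so they do lie in $L^2_{per}(\R)$ where the orthogonality of the $W_\tau$ is applied.
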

\begin{proof}
Assume that the functions~$u$ and~$v$ are taken from two different spaces
of~$X_a$, $X_b$, and~$X_c$, and let~$\tilde{u}$ and~$\tilde{v}$ denote their
extensions in~$W$.

We begin by showing that the standard $L^2(\Omega)$-inner
product of~$u$ and~$v$ vanishes. In view of Lemma~\ref{lem:symmx12}, this
is trivially satisfied if $u \in X_a \cup X_b$ and $v \in X_c$, or vice
versa, since in these cases the product~$uv$ is always odd with respect
to $x = 1/2$, and therefore $\int_0^1 u(x) v(x) \, dx = 0$. Assume 
therefore that $u \in X_a$ and $v \in X_b$. Then one obtains
\begin{eqnarray*}
  \int_0^1 u(x) v(x) \, dx & = &
    \sum_{k=0}^{n-1} \int_{\frac{k}{n}}^{\frac{k+1}{n}} u(x) v(x) \, dx
    \;\; = \;\;
    \sum_{k=0}^{n-1} \int_{0}^{\frac{1}{n}} u \left( x + \frac{k}{n} \right)
    v \left( x + \frac{k}{n} \right) \, dx \\
  & = & 
    \sum_{k=0}^{n-1} \int_{0}^{\frac{1}{n}}
    \left( (-1)^k u \left( x + \frac{k}{n} \right) \right)
    \left( (-1)^k v \left( x + \frac{k}{n} \right) \right) \, dx \\
  & = &
    \sum_{k=0}^{n-1} \int_{0}^{\frac{1}{n}}
    \left( T_n^k \tilde{u}(x) \right)
    \left( T_n^k \tilde{v}(x) \right) \, dx
    \;\; = \;\;
    \sum_{k=0}^{n-1} \int_{0}^{\frac{1}{n}}
    \tilde{u}(x) \, T_n^k \tilde{v}(x) \, dx \\
  & = &
    \int_{0}^{\frac{1}{n}} u(x)
    \left( \sum_{k=0}^{n-1} T_n^k \tilde{v}(x) \right) \, dx
    \;\; = \;\; 0 \; , 
\end{eqnarray*}
where we used the facts that $T_n^k \tilde{u} = \tilde{u}$
and $\sum_{k=0}^{n-1} T_n^k \tilde{v} = 0$.

The remaining two terms in the inner products defined in the formulation
of the lemma can be discussed similarly. The statement for the second
derivative term~$(\Delta u, \Delta v)_{L^2(\Omega)}$ follows completely
analogously, since the Laplacian~$\Delta$ commutes with~$T_n$ and
preserves any even or odd symmetry with respect to $x = 1/2$. Finally,
by using integration by parts and the boundary conditions imposed
in~$X$, one can obtain $(\nabla u, \nabla v)_{L^2(\Omega)} =
-(\Delta u, v)_{L^2(\Omega)}$, and the result follows again as before.
\end{proof}

\medskip
With the above result we have completed the construction of a
decomposition of the Hilbert space~$X$ defined in~\eqref{def:xyspaces},
which serves as the domain of our nonlinear diblock copolymer
operator~$F : X \to Y$ introduced in~\eqref{def:Flambdau}. We now
turn our attention to the image space~$Y$.
\begin{definition}[Symmetry induced space decomposition of~$Y$]
\label{def:abcy}
Consider the Hilbert space~$Y = H^{-2}(\Omega)$ for $\Omega = (0,1)$
introduced in~\eqref{def:xyspaces}. Let  $u$ be any element of $Y$. 
Then its inverse image~$\Delta^{-1} u$ with respect to the Laplacian operator
$\Delta: L^2(\Omega) \cap \left\{ \int_\Omega u \, dx = 0 \right\} \to
H^{-2}(\Omega)$ has an extension~$\hat{u} \in L^2_{per}(\R)$, given
by~$\hat{u} = \widetilde{\Delta^{-1} u}$. Furthermore, since the
Laplacian is an isometry with respect to our chosen norms, we have
with $W = L^2_{per}(\R)$ the decomposition
\begin{displaymath}
  L^2_{per}(\R) \; = \;
   W_a \oplus  W_b \oplus W_c \; ,
\end{displaymath}
where~$W_a$, $W_b$, and~$W_c$ contained in $L^2_{per}(\R)$ were defined in~\eqref{eqn:wabc}.
The subspaces in this decomposition are pairwise orthogonal. Thus, if we define
\begin{displaymath}
  Y_\tau = \left\{ u \in Y \; : \;
    \hat{u} \in  W_\tau  \right\}
  \quad\mbox{ for all }\quad
  \tau \in \{ a,b,c \}
\end{displaymath}
then one can verify that $Y = Y_a \oplus Y_b \oplus Y_c$,
and that the involved subspaces are pairwise orthogonal. In fact,
one can also show that $Y_\tau = \Delta^2 X_\tau$ for all
$\tau \in \{ a,b,c \}$. Finally, using our earlier definition of
$T_n : H^{-2}_{per}(\R) \to H^{-2}_{per}(\R)$ via the identity
$T_n(u) = \Delta T_n(\Delta^{-1}u )$, one can  verify that we
have $u \in Y_\tau$ if and only if one has the equality
$m_\tau(T_n)[\Delta\hat{u}] = 0$. In the following, we will therefore
call~$\Delta\hat{u} \in H^{-2}_{per}(\R)$ the extension of~$u \in Y$.
\end{definition}
The above descriptions of the spaces~$X$ and~$Y$, and of their respective
decompositions, are tailor-made for the results discussed in the remainder
of the paper. Nevertheless, we close this subsection with an alternative
description of these spaces, which is based on cosine Fourier series.
\begin{remark}[Cosine Fourier series representations]
\label{rem:fourierabc}
Consider an arbitrary function~$u \in X$. Then it was shown
in~\cite{sander:wanner:21a} that there exists a unique representation
\begin{equation} \label{rem:fourierabc1}
  u(x) \; = \; \sum_{k \in \N} a_k \cos(k \pi x)
  \quad\mbox{ with }\quad
  a_k \in \R
  \;\;\mbox{ for }\;\; k \in \N \; ,
\end{equation}
where the constant term $k = 0$ is omitted due to the zero mass 
constraint, and the series converges with respect to our chosen
norm on~$X$.

As it turns out, each of the three subspaces~$X_a$, $X_b$,
and~$X_c$ have a simple description in terms of this series 
representation, since the basis functions are pairwise orthogonal,
and every one of these functions is contained in exactly one of
these spaces.

To begin with, consider the space~$X_a$. For any function~$u \in X_a$,
if we denote its extension again by~$\tilde{u}$, one can easily see 
that in view of $T_n(\tilde{u}) = \tilde{u}$ we have
\begin{equation} \label{Xafunctionoddat1o2n}
  u\left( \frac{1}{2n} + x \right) \; = \;
  \tilde{u}\left( -\frac{1}{2n} - x \right) \; = \;
  -\tilde{u}\left( -\frac{1}{2n} - x + \frac{1}{n} \right) \; = \;
  -u\left( \frac{1}{2n} - x \right) \; ,
\end{equation}
for all $x \in (0,1 / (2n))$, i.e., the function~$u$ is odd with
respect to~$x = 1/(2n)$ on the subinterval~$(0,1/n)$. This in turn
implies that the average of~$u$ vanishes over~$(0,1/n)$, and since
the functions~$\cos(k\pi x/L)$ for~$L = 1/n$ and $k \in \N_0$ form
a complete orthogonal set in~$L^2((0,1/n))$, one obtains that
\begin{displaymath}
  u \in X_a
  \quad\mbox{ if and only if }\quad
  u(x) \; = \; \sum_{k \in n\N} a_k \cos(k \pi x) 
    \; = \; \sum_{\ell \in \N} a_{\ell n} \cos(\ell n \pi x) \; ,
\end{displaymath}
since the basis functions~$\cos(\ell n \pi x)$ for $\ell \in \N$ clearly
lie in~$X_a$ themselves. In other words, the function~$u$ is contained
in~$X_a$ if and only if its cosine Fourier series contains only terms
corresponding to wave numbers~$k$ which are multiples of~$n$.

We now turn our attention to the spaces~$X_b$ and~$X_c$. Due to
Lemma~\ref{lem:pairwiseorthogonal}, cosine Fourier expansions of
a function~$u$ in either of these spaces can only contain terms
for wave numbers which are not divisible by~$n$. Furthermore, one
can verify by direct inspection that
\begin{displaymath}
  \begin{array}{ccrcc}
    \DS T_n^n \cos(k \pi x) & = & \DS \cos(k \pi x) &
      \quad\mbox{ if and only if }\quad &
      \DS n \equiv k \mod 2 \; , \\[1ex]
    \DS T_n^n \cos(k \pi x) & = & \DS -\cos(k \pi x) &
      \quad\mbox{ if and only if }\quad &
      \DS n \not\equiv k \mod 2 \; .
  \end{array}
\end{displaymath}
This gives the characterizations
\begin{displaymath}
  \begin{array}{lccccl}
    \mbox{Even~$n$:} & \DS u \in X_b &
      \quad\mbox{ if and only if }\quad & \DS u(x) & = & \DS
      \sum_{k \not\in n\N \, , \; k \;\rm{even}} a_k \cos(k \pi x)
      \; , \\[4ex]
    & \DS u \in X_c &
      \quad\mbox{ if and only if }\quad & \DS u(x) & = & \DS
      \sum_{k \not\in n\N \, , \; k \;\rm{odd}} a_k \cos(k \pi x)
      \; , \\[4ex]
    \mbox{Odd~$n$:} & \DS u \in X_b &
      \quad\mbox{ if and only if }\quad & \DS u(x) & = & \DS
      \sum_{k \not\in n\N \, , \; k \;\rm{odd}} a_k \cos(k \pi x)
      \; , \\[4ex]
    & \DS u \in X_c &
      \quad\mbox{ if and only if }\quad & \DS u(x) & = & \DS
      \sum_{k \not\in n\N \, , \; k \;\rm{even}} a_k \cos(k \pi x)
      \; .
  \end{array}
\end{displaymath}
We would like to point out that these characterizations immediately
imply the statements of Table~\ref{table:wtauaddsym}. Furthermore,
the above characterizations remain valid without change for the
image space decomposition~$Y = Y_a \oplus Y_b \oplus Y_c$, as long
as one considers the cosine Fourier series in a formal sense and its
convergence in the norm defined in~$Y$. For more details, we refer
the reader to~\cite{sander:wanner:21a}.
\end{remark}
%
%
\subsection{Equivariance properties of the nonlinear operator}
\label{subsec:symmetry3}
We close this section by establishing the equivariance properties of the
nonlinear diblock copolymer operator~$F : X \to Y$ defined in~(\ref{def:Flambdau})
and~(\ref{def:xyspaces}), with a particular emphasis on how this operator and its
Fr\'echet derivative~$L$ defined in~(\ref{eqn:dfu}) interacts with the space
decompositions of~$X$ and~$Y$ from the last subsection. As mentioned in the
introduction, throughout this paper we consider the one-dimensional domain
$\Omega = (0,1)$, the total mass $\mu = 0$, and the odd nonlinearity
$f(u) = u - u^3$. We would like to point out, however, that the results
of this section remain valid for any smooth odd function~$f : \R \to \R$,
and are in fact formulated for that case.

Throughout this section, we consider mapping properties of operators between
the spaces~$X_\tau$ and~$Y_\tau$ introduced in the last section. In order to
keep the notation as simple as possible, we will use the same letter for a
function in~$X_\tau$ and its extension in~$H^2_{per}(\R)$, and similarly
for elements in~$Y_\tau$ and their extensions in~$H^{-2}_{per}(\R)$. Thus,
we can consider the diblock copolymer operator~$F$ both as an operator
between~$X$ and~$Y$, as well as an operator of the form $F : H^2_{per}(\R)
\to H^{-2}_{per}(\R)$.

It has already been stated several times that our main focus is the verification
of a special kind of symmetry-breaking pitchfork bifurcation. Thus, the equilibrium
solutions on the bifurcating branch will exhibit different, and in fact fewer,
symmetry properties than the solutions on the primary steady state branch. This
primary branch is characterized by invariance with respect to the symmetry
operator~$T_n$ defined in~(\ref{eqn:z2nsymm}), and the following first lemma
shows that both~$F$ and its partial derivative~$D_{\lambda}F$
respect this symmetry.
\begin{lemma}[First equivariance properties]
\label{lem:xapreserved}
Let $\Omega = (0,1)$, consider the total mass $\mu = 0$, and let~$f$ be a smooth
and odd nonlinearity. Furthermore, let~$F : X \to Y$ be defined as in~(\ref{def:Flambdau})
and~(\ref{def:xyspaces}). Then for every $u \in X_a$ we have both $F(\lambda,u) \in Y_a$
and $D_\lambda F(\lambda,u) \in Y_a$.
\end{lemma}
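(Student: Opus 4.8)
The plan is to deduce both assertions from a single equivariance identity for the \emph{full} diblock copolymer operator viewed on periodic functions, namely $T_n F(\lambda, v) = F(\lambda, T_n v)$ for all $v \in H^2_{per}(\R)$, and then to invoke the characterizations of $X_a$ and $Y_a$ from Definitions~\ref{def:abcx} and~\ref{def:abcy}: a function lies in the ``$a$''-component precisely when its periodic extension is fixed by $T_n$, i.e. lies in $N(m_a(T_n)) = N(T_n - I)$.

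First I would verify this identity by direct computation on $H^2_{per}(\R)$. The needed ingredients are all at hand: $T_n$ is linear, it commutes with $\Delta$ (Section~\ref{subsec:symmetry2}), and --- here is where the standing hypotheses $\mu = 0$ and $f$ odd are used --- it satisfies $f(T_n v) = T_n f(v)$ pointwise, since $f\bigl((T_n v)(x)\bigr) = f\bigl(-v(x + 1/n)\bigr) = -f\bigl(v(x+1/n)\bigr) = \bigl(T_n f(v)\bigr)(x)$. Substituting into $F(\lambda, v) = -\Delta(\Delta v + \lambda f(v)) - \lambda\sigma v$ and pulling $T_n$ through $\Delta$ and the linear combination gives $F(\lambda, T_n v) = -\Delta T_n(\Delta v + \lambda f(v)) - \lambda\sigma T_n v = -T_n \Delta(\Delta v + \lambda f(v)) - \lambda \sigma T_n v = T_n F(\lambda, v)$. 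Since $T_n$ is a bounded linear operator independent of $\lambda$, differentiating this identity in $\lambda$ (for fixed $v$) yields the companion identity $T_n D_\lambda F(\lambda, v) = D_\lambda F(\lambda, T_n v)$; alternatively one simply repeats the shorter computation starting from $D_\lambda F(\lambda, v) = -\Delta f(v) - \sigma v$.

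To conclude, let $u \in X_a$. By Definition~\ref{def:abcx} its extension $\tilde u$ lies in $W_a = N(T_n - I) \subset H^2_{per}(\R)$, so $T_n \tilde u = \tilde u$. Regarding $F$ as a map $H^2_{per}(\R) \to H^{-2}_{per}(\R)$ as at the start of Section~\ref{subsec:symmetry3}, the equivariance identity gives $T_n F(\lambda, \tilde u) = F(\lambda, T_n \tilde u) = F(\lambda, \tilde u)$, i.e. $m_a(T_n)\bigl[F(\lambda,\tilde u)\bigr] = 0$; since $F(\lambda,\tilde u)$ is exactly the extension of $F(\lambda, u) \in Y$, Definition~\ref{def:abcy} gives $F(\lambda, u) \in Y_a$. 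The identical argument applied to $D_\lambda F$ yields $D_\lambda F(\lambda, u) \in Y_a$.

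I expect the only point needing real care --- and the one I would write out --- to be the claim that $F(\lambda, \tilde u)$, computed directly in $H^{-2}_{per}(\R)$, coincides with the extension of $F(\lambda,u)$ in the sense of Definition~\ref{def:abcy} (and similarly for $D_\lambda F$). This reduces to checking that even reflection about $x = 0$ and $x = 1$ commutes with $\Delta$ --- which holds because the Neumann conditions built into $X$ force the odd-order derivatives of $u$ to vanish at the endpoints --- and also with composition by $f$ and with scalar multiplication, while any failure of $\Delta u + \lambda f(u)$ to have zero mean is immaterial since $\Delta$ annihilates constants. Combined with the fact that $T_n$ on $H^{-2}_{per}(\R)$ is defined through the isometry $\Delta$ so as to be consistent with $T_n$ on $L^2_{per}(\R)$, this reduces the whole statement to the one clean computation above.
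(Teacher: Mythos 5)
Your proposal is correct and follows essentially the same route as the paper's proof: the oddness of $f$ (with $\mu=0$) gives $T_n f(v)=f(T_n v)$, commutation of $T_n$ with $\Delta$ then yields $T_n F(\lambda,u)=F(\lambda,u)$ for $T_n$-fixed extensions, and the characterization of $Y_a$ in Definition~\ref{def:abcy} (together with preservation of evenness under $\Delta$ and the Nemitski operator) finishes the argument, with the same reasoning for $D_\lambda F(\lambda,u)=-\Delta f(u)-\sigma u$. Your only cosmetic difference is proving the full equivariance identity first and then specializing to fixed points, rather than computing directly with $T_n u=u$ as the paper does.
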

\begin{proof}
Let~$u \in X_a$ be arbitrary. Then its extension in~$H^2_{per}(\R)$ is even, and due
to the properties of the Laplacian and the Nemitski operator~$f$ the same is true for
both~$F(\lambda,u)$ and~$D_\lambda F(\lambda,u)$. Moreover, the oddness of~$f$ 
and $T_n u = u$ imply
\begin{displaymath}
  T_n f(u(x)) = -f(u(x+1/n)) = f(-u(x+1/n)) = f(u(x)) \; .
\end{displaymath}
In combination with $T_n \Delta = \Delta T_n$ one therefore obtains 
$T_n F(\lambda,u) = F(\lambda,u)$, and this in turn implies $F(\lambda,u) \in Y_a$,
see also Definition~\ref{def:abcy}. The inclusion for
$D_\lambda F(\lambda,u) = -\Delta f(u) - \sigma u$ can be verified analogously.
\end{proof}

\medskip
In view of this lemma, one can study the equilibrium problem~$F(\lambda,u) = 0$
restricted to the symmetry space~$X_a$, and this will provide us with a primary 
solution branch in~$X_a$. The next two auxiliary results address how first-
and second-order partial derivatives of~$F$ interact with the various symmetry
spaces. The resulting inclusions are central for our bifurcation analysis.
\begin{lemma}[Equivariance properties of $D_uF$ and $D_{\lambda u}F$]
\label{lem:subsets}
Let $\Omega = (0,1)$, consider the total mass $\mu = 0$, and let~$f$ be a smooth
and odd nonlinearity. Furthermore, let~$F : X \to Y$ be defined as in~(\ref{def:Flambdau})
and~(\ref{def:xyspaces}). Then for arbitrary $\lambda \in  \R$ and $u \in X_a$, and
every $\tau \in \{ a,b,c \}$ we have the inclusions 
\begin{equation} \label{lem:subsets1}
  D_uF(\lambda,u)[X_\tau] \subset Y_\tau
  \qquad\mbox{ and }\qquad
  D_{\lambda u}F(\lambda,u)[X_\tau] \subset Y_\tau \; .
\end{equation}
In addition, we have
\begin{equation} \label{lem:subsets2}
  R(D_uF(\lambda,u)) =
    D_u F(\lambda,u)[X_a] \oplus
    D_u F(\lambda,u)[X_b] \oplus
    D_u F(\lambda,u)[X_c] \; ,
\end{equation}
as well as both
\begin{equation} \label{lem:subsets3}
  D_u F(\lambda,u)[X_\tau] =
  R(D_u F(\lambda,u)) \cap Y_\tau
  \qquad\mbox{ and }\qquad
  P(Y_\tau) \subset Y_\tau
\end{equation}
for all $\tau \in \{ a,b,c \}$, where $P : Y \to Y$ denotes the orthogonal
projection which was introduced in Definition~\ref{def:orthproj}.
\end{lemma}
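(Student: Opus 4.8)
The plan is to build everything on the operator identity $T_n L = L T_n$ on the periodic spaces, which is the natural extension of Lemma~\ref{lem:xapreserved} to the linearization. First I would verify this commutation relation: since $u \in X_a$ means $T_n u = u$ (on extensions), and since $f$ is odd, differentiating the identity $T_n f(u) = f(u)$ shows that $T_n (f'(u) v) = f'(u) (T_n v)$ for every $v$; combined with $T_n \Delta = \Delta T_n$ this gives $T_n L[v] = L[T_n v]$ for all $v \in H^2_{per}(\R)$. Once this is in hand, the first inclusion in~\eqref{lem:subsets1} is immediate: if $v \in X_\tau$, then $m_\tau(T_n) v = 0$, hence $m_\tau(T_n) L[v] = L[m_\tau(T_n) v] = 0$, so $L[v] \in Y_\tau$ by the characterization in Definition~\ref{def:abcy}. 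The analogous statement for $D_{\lambda u}F(\lambda,u) = -\Delta f'(u) \cdot - \sigma I$ follows by exactly the same computation, since this operator also commutes with $T_n$.

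For~\eqref{lem:subsets2}, I would argue that since $X = X_a \oplus X_b \oplus X_c$, we certainly have $R(L) = L[X] = L[X_a] + L[X_b] + L[X_c]$; the content is that this sum is direct, which follows from~\eqref{lem:subsets1} together with the fact that $Y_a, Y_b, Y_c$ are pairwise orthogonal (hence intersect trivially in pairs): $L[X_\tau] \subset Y_\tau$ and the $Y_\tau$ form a direct sum, so the subspaces $L[X_\tau]$ do too. The first identity in~\eqref{lem:subsets3} then follows: the inclusion $L[X_\tau] \subset R(L) \cap Y_\tau$ is clear from~\eqref{lem:subsets1}; conversely, given $y \in R(L) \cap Y_\tau$, write $y = L[x]$ with $x = x_a + x_b + x_c$ in the $X$-decomposition, so $y = L[x_a] + L[x_b] + L[x_c]$ is a decomposition of $y$ along $Y_a \oplus Y_b \oplus Y_c$; but $y \in Y_\tau$ forces, by uniqueness of that decomposition, $L[x_{\tau'}] = 0$ for $\tau' \neq \tau$, whence $y = L[x_\tau] \in L[X_\tau]$.

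The main obstacle is the last claim, $P(Y_\tau) \subset Y_\tau$, because $P$ is defined abstractly as the orthogonal projection of $Y$ onto $\tilde{Y} = R(L)^\perp$, and one must connect the symmetry decomposition to this orthogonality. Here is where the orthogonality of the $Y$-decomposition (from Definition~\ref{def:abcy}, ultimately Lemma~\ref{lem:pairwiseorthogonal}) does the work. The key observation is that $\tilde Y$ is itself compatible with the decomposition: since $R(L) = L[X_a] \oplus L[X_b] \oplus L[X_c]$ with $L[X_\tau] \subset Y_\tau$, and since the $Y_\tau$ are mutually orthogonal, the orthogonal complement of $R(L)$ inside $Y$ decomposes as $\tilde Y = \bigoplus_\tau (Y_\tau \cap R(L)^\perp) = \bigoplus_\tau (Y_\tau \ominus L[X_\tau])$ --- that is, taking orthogonal complements respects the orthogonal direct sum. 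Consequently both $R(L)$ and $\tilde Y = R(P)$ are direct sums of their intersections with the $Y_\tau$, so the orthogonal projector $P$ onto $\tilde Y$ commutes with the orthogonal projectors onto each $Y_\tau$; in particular $P(Y_\tau) \subset Y_\tau$. I would spell this out by noting that for $y \in Y_\tau$, its component $Py$ lies in $\tilde Y$, and writing $Py = \sum_{\tau'} (Py)_{\tau'}$ along the $Y$-decomposition, each $(Py)_{\tau'} \in Y_{\tau'} \cap \tilde Y$; pairing against $y \in Y_\tau$ and using orthogonality of the $Y_{\tau'}$ kills all terms with $\tau' \neq \tau$, and one checks the cross terms vanish because $y - Py \in R(L)$ also decomposes compatibly. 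This finishes the proof.
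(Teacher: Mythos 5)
Your proposal is correct and follows essentially the same route as the paper: the commutation $T_n L[v] = L[T_n v]$ for $u \in X_a$ (using that $f'$ is even) yields~\eqref{lem:subsets1}, the decompositions $X = X_a \oplus X_b \oplus X_c$ and $Y = Y_a \oplus Y_b \oplus Y_c$ give~\eqref{lem:subsets2} and the first half of~\eqref{lem:subsets3}, and your observation that $\tilde{Y} = R(L)^\perp$ splits as $\bigoplus_\tau \left( Y_\tau \cap R(L)^\perp \right)$ is exactly the mechanism by which the paper identifies $Py = y_{a,2} + y_{b,2} + y_{c,2}$ and concludes $P(Y_\tau) \subset Y_\tau$. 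One small caveat: to obtain $T_n(f'(u)v) = f'(u)\,T_n v$ for \emph{all} $v \in X$ you should not differentiate the identity $T_n f(u) = f(u)$, which holds only on $X_a$ and therefore only yields the relation for directions $v \in X_a$; instead either compute directly as the paper does (using that $f'$ is even and $u(x+1/n) = -u(x)$), or differentiate the full equivariance $T_n f(u) = f(T_n u)$, valid for every $u$ by oddness of $f$, and then set $T_n u = u$.
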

\begin{proof}
Let~$\lambda \in \R$ and $u \in X_a$ be fixed, and consider an arbitrary~$v \in X$.
For notational convenience in this proof, we introduce the abbreviation $L = D_u F(\lambda,u)$.
Since we assumed that~$f$ is an odd function, its derivative~$f'$ is even. This
immediately implies
\begin{eqnarray*}
  T_n \left( f'(u(x)) v(x) \right) & = & -f'(u(x+1/n)) v(x+1/n)  \\
  & = & f'(-u(x)) (- v(x+1/n))  \; = \;
  f'(u(x)) T_n v (x) \; .
\end{eqnarray*}
Since the operator~$T_n$ also commutes with each of the other two terms
in the explicit representation~(\ref{eqn:dfu}) of~$D_uF(\lambda,u)[v]$,
one therefore obtains $T_n L [v] = L [T_n v]$. This in turn implies that
the inclusion~$v \in X_\tau$ automatically implies $D_uF(\lambda,u)[v] \in Y_\tau$,
where we again refer the reader to Definition~\ref{def:abcy}. The statement
for $D_{\lambda u} F(\lambda,u)[v] = -\Delta(f'(u)v) - \sigma v$ can be established
completely analogously, and this completes the proof of~(\ref{lem:subsets1}).

The just-established~(\ref{lem:subsets1}) implies that the spaces~$L[X_a]$, $L[X_b]$, and~$L[X_c]$
form a direct sum, since the spaces~$Y_a$, $Y_b$, and~$Y_c$ do. We clearly
have $L[X_a] \oplus L[X_b] \oplus L[X_c] \subset R(L)$, and the reverse inclusion
follows from $R(L) \ni y = Lx = Lx_a + Lx_b + Lx_c$, if we write
$x = x_a + x_b + x_c \in X_a \oplus X_b \oplus X_c = X$. This
implies~(\ref{lem:subsets2}).

As for~(\ref{lem:subsets3}), let~$\tau \in \{ a,b,c \}$ be arbitrary. Then
one obviously has $L[X_\tau] \subset R(L) \cap Y_\tau$. To verify the opposite
inclusion, let~$y \in R(L) \cap Y_\tau$ be arbitrary. Then $y = Lx$, and we can
again decompose~$x$ in the form $x = x_a + x_b + x_c \in X_a \oplus X_b \oplus X_c$.
But this in turn yields the inclusion $y = Lx_a + Lx_b + Lx_c \in Y_a \oplus Y_b
\oplus Y_c$, and $y \in Y_\tau$ in combination with the properties of direct sums
immediately imply $y = Lx_\tau \in L[X_\tau]$. In order to establish the 
inclusion statement regarding the orthogonal projection~$P$, one just has to
note that every $y \in Y$ can be written uniquely as $y = y_a + y_b + y_c \in
Y_a \oplus Y_b \oplus Y_c$, and that for $\tau \in \{ a,b,c \}$ one further
has $y_\tau = y_{\tau,1} + y_{\tau,2}$, where~$y_{\tau,1} \in L(X_\tau)$
and~$y_{\tau,2}$ is contained in the orthogonal complement of~$L(X_\tau)$
in~$Y_\tau$, which might in fact be trivial. Altogether, this gives the
decomposition
\begin{displaymath}
  y = y_{a,1} + y_{a,2} + y_{b,1} + y_{b,2} + y_{c,1} + y_{c,2}
\end{displaymath}
into pairwise orthogonal elements, and one can easily see that
$Py = y_{a,2} + y_{b,2} + y_{c,2}$. From this, the last statement follows
readily, and the proof of the lemma is complete.
\end{proof}
\begin{lemma}[Equivariance properties of~$D_{uu}F$]
\label{lem:derivcontain}
Let $\Omega = (0,1)$, consider the mass $\mu = 0$, and let~$f$ be a smooth
and odd nonlinearity. Furthermore, let~$F : X \to Y$ be defined as in~(\ref{def:Flambdau})
and~(\ref{def:xyspaces}). Then for all $\lambda \in \R$ and $u \in X_a$ the following
inclusions are satisfied:
\begin{equation}
  \begin{array}{ccccc}
    \mathit{(i)}   & \quad & D_{uu}F(\lambda,u)[X_a,X_a] &\subset& Y_a\\[0.5ex]
    \mathit{(ii)}  & & D_{uu}F(\lambda,u)[X_a \oplus X_b,X_a \oplus X_b] &\subset& Y_a \oplus Y_b\\[0.5ex]
    \mathit{(iii)} & & D_{uu}F(\lambda,u)[X_a,X_b] &\subset& Y_b \\[0.5ex]
    \mathit{(iv)}  & & D_{uu}F(\lambda,u)[X_c,X_c] &\subset& Y_a \oplus Y_b\\[0.5ex]
    \mathit{(v)}   & & D_{uu}F(\lambda,u)[X_a \oplus X_b,X_c] &\subset& Y_c
  \end{array}
\end{equation}
Note that due to the symmetry of~$D_{uu} F(\lambda,u)$, the order
of the two arguments in~$\mathit{(iii)}$ and~$\mathit{(v)}$ does not matter.
\end{lemma}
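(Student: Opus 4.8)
The plan is to compute the second Fr\'echet derivative explicitly and reduce every inclusion to a statement about how products of functions from the spaces~$X_a$, $X_b$, $X_c$ decompose under the symmetry operator~$T_n$. From~(\ref{eqn:dfu}), differentiating once more in~$u$ gives the bilinear map
\begin{displaymath}
  D_{uu}F(\lambda,u)[v,w] \; = \; -\lambda\,\Delta\bigl( f''(u)\,v\,w \bigr) \; ,
\end{displaymath}
since the terms~$-\Delta(\Delta v)$ and~$-\lambda\sigma v$ in~$L[v]$ are linear in~$v$ and contribute nothing, while the $\lambda f'(u)v$-term produces the stated expression. Because~$\Delta$ is an isometry between~$L^2_{per}(\R)$ and~$H^{-2}_{per}(\R)$ and commutes with~$T_n$ (and since, by Remark~\ref{rem:fourierabc} or Definition~\ref{def:abcy}, membership of an element of~$Y$ in~$Y_\tau$ is detected by applying~$m_\tau(T_n)$ to its extension in~$L^2_{per}(\R)$ exactly as for~$X$), each inclusion~$D_{uu}F(\lambda,u)[\cdot,\cdot]\subset Y_{\{\ldots\}}$ is equivalent to the corresponding statement for the pointwise product~$f''(u)\,v\,w$ as an element of~$L^2_{per}(\R)$, decomposed via~(\ref{eqn:wabc}). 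The first step, then, is to record the key \emph{multiplicative} facts: since~$f$ is odd, $f''$ is odd, so $f''(u)(x+1/n) = f''(-u(x)) = -f''(u(x))$ for $u \in X_a$, i.e.\ $T_n(f''(u)) = f''(u)$, so $f''(u)$ behaves exactly like an element of~$W_a$ under~$T_n$. Hence the question reduces entirely to: \emph{given $v \in W_\tau$ and $w \in W_{\tau'}$ (viewed as periodic functions), in which of $W_a, W_b, W_c$ does the product $vw$ lie?}

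The core computation is therefore a ``multiplication table'' for the decomposition $W = W_a \oplus W_b \oplus W_c$. I would organize it as follows. Recall $W_a = N(T_n - I)$ consists of functions with $T_n v = v$; $W_a \oplus W_b = N(T_n^n - I)$; and $W_c = N(T_n^n + I)$. For a product, $T_n^k(vw)(x) = v(x + k/n)\,w(x + k/n)$ with \emph{no sign}, because the two $(-1)^k$ factors coming from $T_n^k v$ and $T_n^k w$ cancel. This is the crucial observation: products of two functions are acted on by~$T_n$ \emph{without the sign twist}, so the relevant operator on products is really the untwisted shift $S_n: g(x) \mapsto g(x+1/n)$, which satisfies $S_n^n = I$ and has minimal polynomial dividing $t^n - 1 = (t-1)(t^{n-1}+\cdots+1)$; there is no ``$W_c$-type'' eigenspace for~$S_n$ at all. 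Using this I would verify:
(a) if $v, w$ both lie in $W_a$ then $S_n(vw) = (T_n v)(T_n w) = vw$, so $vw \in N(S_n - I)$, giving~$(i)$ and, with the evenness bookkeeping of Lemma~\ref{lem:evenoddWdecomp} and Definition~\ref{def:abcx}, also~$(iii)$ once one checks $v\in X_a, w \in X_b$ forces $S_n$-invariance of $vw$ together with $\sum_{k} S_n^k(vw)$-behavior placing it in the $W_b$-component;
(b) if $v, w \in W_a \oplus W_b = N(T_n^n - I)$ then $T_n^n v = v$ and $T_n^n w = w$, so $T_n^n(vw) = vw$, i.e.\ $vw \in N(T_n^n - I) = W_a \oplus W_b$, giving~$(ii)$;
(c) if $v, w \in W_c = N(T_n^n + I)$ then $T_n^n v = -v$ and $T_n^n w = -w$, so $T_n^n(vw) = (-1)(-1)vw = vw$, again landing in $W_a \oplus W_b$, giving~$(iv)$;
(d) if $v \in W_a \oplus W_b = N(T_n^n - I)$ and $w \in W_c = N(T_n^n + I)$ then $T_n^n(vw) = -vw$, so $vw \in N(T_n^n + I) = W_c$, giving~$(v)$.
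In each case I would then feed in the effect of $f''(u)$: since $f''(u) \in W_a = N(T_n - I) \subset N(T_n^n - I)$, multiplying by it preserves membership in $N(T_n^n - I)$ and in $N(T_n^n + I)$ (indeed it preserves all three components since $T_n(f''(u)g) = f''(u)\,T_n g$ exactly as in the proof of Lemma~\ref{lem:subsets}), so $f''(u)\,v\,w$ sits in the same space as $v\,w$. Finally, applying~$\Delta$ and invoking $Y_\tau = \Delta^2 X_\tau$ together with the characterization of~$Y_\tau$ via~$m_\tau(T_n)$ from Definition~\ref{def:abcy} transports the conclusion from $L^2_{per}(\R)$ to~$Y$.

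The main obstacle I anticipate is statement~$(iii)$, and more subtly the sharpness in~$(i)$ versus~$(ii)$: the untwisted-shift argument above cleanly shows $f''(u)\,v\,w \in W_a \oplus W_b$ whenever $v,w \in W_a$, but to get the stronger $\in W_a$ in~$(i)$ one needs that the product of two $T_n$-eigenfunctions for the eigenvalue-chain structure actually kills the $W_b$-component, which requires checking $\sum_{k=0}^{n-1} S_n^k(f''(u)vw) = (\text{something})\cdot$ a sum that telescopes to a constant multiple of the whole function rather than zero — this is where one must use that $v,w\in W_a$ means $T_n v = v$ exactly (not merely $T_n^n v = v$), so that $S_n(f''(u)vw) = f''(u)vw$ and hence $f''(u)vw \in N(S_n - I)$, and $N(S_n - I)$ corresponds precisely to~$W_a$ (wave numbers divisible by~$n$, via Remark~\ref{rem:fourierabc}) since $N(S_n-I) \cap W = W_a$. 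Dually, for~$(iii)$ one needs that $v \in W_a$, $w \in W_b$ gives $f''(u)vw \in W_b$: here $S_n(f''(u)vw) = f''(u)vw$ again (so it is in $N(S_n-I)$?), which would wrongly suggest $W_a$ — so the resolution must instead go through the \emph{evenness with respect to $x=1/2$} bookkeeping of Lemma~\ref{lem:symmx12} and the Fourier characterization, distinguishing $X_b$ from $X_a$ by parity of wave numbers relative to~$n$ rather than by the $T_n$-action alone. I would handle $(iii)$ and $(v)$ by passing to the cosine-Fourier description of Remark~\ref{rem:fourierabc}: writing $u \in X_a$ with only wave numbers in $n\N$, $v \in X_a$ (wave numbers in $n\N$) or $X_b$, and using that multiplication of cosines adds and subtracts wave numbers, one checks directly that the resulting wave numbers land in $n\N$, respectively outside $n\N$ with the correct parity, and the $Y$-analogue of the Fourier characterization then finishes the argument. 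The remaining cases $(ii)$, $(iv)$ follow purely from the $T_n^n$-eigenvalue computation and are routine.
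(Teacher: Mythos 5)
Your overall strategy --- first locate the bare product $vw$ in the decomposition, then argue that multiplication by $f''(u)$ does not move it --- breaks at its pivotal step. Since $f$ is odd, $f''$ is \emph{odd}, not even, so the identity you import from the proof of Lemma~\ref{lem:subsets} fails here: for $u \in X_a$ one has $T_n\bigl(f''(u)\,g\bigr) = -f''(u)\,T_n g$, not $+f''(u)\,T_n g$ (the identity $T_n(f'(u)v) = f'(u)T_nv$ used there relies on $f'$ being even). Hence multiplication by $f''(u)$ does \emph{not} preserve the components: for $g \in W_a$ the product $f''(u)g$ is $1/n$-periodic and therefore satisfies $T_n(f''(u)g) = -f''(u)g$, so it lies in $N(T_n+I)$, not in $W_a$. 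Related sign slips occur in your treatment of the bare product: $T_n^k(vw) = (-1)^k\,(T_n^k v)(T_n^k w)$, so e.g.\ for $v,w \in N(T_n^n - I)$ and $n$ odd one gets $T_n^n(vw) = -vw$; and $N(S_n - I)\cap W$ is not $W_a$ --- a $1/n$-periodic function is $T_n$-\emph{anti}-invariant, so $N(S_n-I)\cap W_a = \{0\}$. Where your cases $\mathit{(i)}$, $\mathit{(ii)}$, $\mathit{(iv)}$ land on the correct conclusion, they do so only because these two errors cancel, which is not a proof. Your fallback for $\mathit{(iii)}$ and $\mathit{(v)}$ via Remark~\ref{rem:fourierabc} does not repair this: the premise $S_n(f''(u)vw) = f''(u)vw$ for $w \in X_b$ is unfounded (elements of $W_b$ satisfy no pointwise shift relation, only $\sum_{k=0}^{n-1}T_n^k w = 0$), and a wave-number bookkeeping argument never uses this annihilation identity, which is exactly the mechanism that puts the image in $Y_b$.

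The missing idea is to keep all three factors together and exploit the fact that the oddness of $f''$ compensates the \emph{two} sign flips coming from $v$ and $w$ simultaneously: for $u \in X_a$,
\begin{displaymath}
  T_n\bigl(f''(u)\,v\,w\bigr) \;=\; f''(u)\,(T_n v)(T_n w)\,,
  \qquad\mbox{hence}\qquad
  T_n\,D_{uu}F(\lambda,u)[v,w] \;=\; D_{uu}F(\lambda,u)[T_n v, T_n w]\,,
\end{displaymath}
since $T_n$ commutes with $\Delta$. This single two-argument equivariance identity (which is the paper's proof) yields all five inclusions at once: apply $T_n$ for $\mathit{(i)}$, apply $T_n^n$ for $\mathit{(ii)}$, $\mathit{(iv)}$, $\mathit{(v)}$ using $T_n^n v = \pm v$, $T_n^n w = \pm w$, and for $\mathit{(iii)}$ use $T_n v = v$ to write $T_n^k D_{uu}F[v,w] = D_{uu}F[v,T_n^k w]$ and sum over $k = 0,\dots,n-1$, so that $m_b(T_n)w = 0$ forces the image into $Y_b$. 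No passage through the untwisted shift $S_n$ or through cosine series is needed, and the membership criteria in Definition~\ref{def:abcy} then transfer the conclusion from $H^{-2}_{per}(\R)$ to $Y$.
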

\begin{proof}
Recall that we have $D_{uu}F(\lambda,u)[v,w] = -\Delta (\lambda f''(u) v w)$ for
all $v,w \in X$, and that due to the oddness of~$f$ the second derivative~$f''$ is
also an odd function. Then for~$u \in X_a$ one obtains the identity
\begin{eqnarray*}
  T_n \left( f''(u(x)) v(x) w(x) \right) &=& -f''(u(x+1/n)) v(x+1/n) w(x+1/n) \\
  &=& f''(-u(x+1/n)) (-v(x+1/n))(-w(x+1/n)) \\
  &=& f''(u(x)) T_n v(x) T_n w(x) \;,
\end{eqnarray*}
which in turn readily implies $T_n D_{uu}F(\lambda,u)[v,w] = D_{uu}F(\lambda,u)[T_n v,T_n w]$.
With this formula at hand one can now establish the claims.

Note that if $v,w \in X_a$, then we have both $T_n v = v$ and $T_n w = w$,
and the first statement follows. Similarly, if $v,w \in X_a \oplus X_b$, then
$T_n^n v = v$ and $T_n^n w = w$, which yields the second statement. In addition,
if we assume $v,w \in X_c$, then $T_n^n v = -v$ and $T_n^n w = -w$, and from this
one can obtain~{\em (iv)\/}.

We now turn our attention to~{\em (iii)\/}. If we have $v \in X_a$ and $w \in X_b$,
then for all $k \in \N$ one obtains $T_n^k D_{uu}F(\lambda,u)[v,w] =
D_{uu}F(\lambda,u)[T_n^k v,T_n^k w] = D_{uu}F(\lambda,u)[v,T_n^k w]$. But this
gives
\begin{displaymath}
  \sum_{k = 0}^{n-1} T_n^k D_{uu}F(\lambda,u)[v,w] =
  D_{uu}F(\lambda,u)\left[v, \sum_{k = 0}^{n-1} T_n^k w \right] = 0 \; ,
\end{displaymath}
which in turn implies $D_{uu}F(\lambda,u)[v,w] \in Y_b$. This establishes~{\em (iii)\/}.

Finally, suppose that $v \in X_a \oplus X_b$ and $w \in X_c$. Then $T_n^n v = v$
and $T_n^n w = -w$, and therefore $T_n^n D_{uu}F(\lambda,u)[v,w] =
D_{uu}F(\lambda,u)[T_n^n v,T_n^n w] = -D_{uu}F(\lambda,u)[v,w]$, which 
yields~{\em (v)\/}. This completes the proof of the lemma.
\end{proof}

\medskip
After these preparations we can now turn our attention to the main result of
this section. It shows that under our Hypotheses~\ref{hyp:fred} and~\ref{hyp:1dk}
the kernel function at a potential bifurcation point has to be contained in
one of the spaces~$X_a$, $X_b$, and~$X_c$. In addition, we obtain easily testable
conditions that establish the precise space containing the kernel function. This
will be essential for the bifurcation result and its rigorous verification via
extended systems in the next section.
\begin{proposition}[Kernel function properties]
\label{prop:phi0}
Let $\Omega = (0,1)$, consider the mass $\mu = 0$, and let the function~$f$ be
a smooth and odd nonlinearity. Furthermore, let~$F : X \to Y$ be defined as
in~(\ref{def:Flambdau}) and~(\ref{def:xyspaces}), and suppose that
Hypotheses~\ref{hyp:fred} and~\ref{hyp:1dk} are satisfied, that is, we have
both $F(\lambda_0,u_0) = 0$ and $L\phi_0 = D_u F(\lambda_0,u_0)[\phi_0] = 0$,
and the null space of~$L$ is one-dimensional. In addition, we assume that
the equilibrium~$u_0$ is contained in the symmetry space~$X_a$. Then the
following statements hold.
\begin{enumerate}
\item[(a)] The kernel function~$\phi_0$ is automatically contained in
either~$X_a$, or~$X_b$, or~$X_c$.
\item[(b)] By verifying one of the conditions in the left or right columns
of Table~\ref{table:phi0force}, one can easily establish whether~$\phi_0 \in X_b$
or $\phi_0 \in X_c$ is satisfied, respectively.
\item[(c)] If the kernel function satisfies $\phi_0 \notin X_a$, then the
linearization $L : X_a \to Y_a$ is bijective. In particular, we
have $L[X_a] = Y_a$ in this case.
\end{enumerate}
\end{proposition}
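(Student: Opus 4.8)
The plan is to establish the three claims in sequence, each building on the equivariance lemmas just proven. For part~(a), I would start from the decomposition $X = X_a \oplus X_b \oplus X_c$ and write the kernel function as $\phi_0 = \phi_a + \phi_b + \phi_c$ with $\phi_\tau \in X_\tau$. Since $u_0 \in X_a$, Lemma~\ref{lem:subsets} gives $L[X_\tau] \subset Y_\tau$ for each $\tau$, so $0 = L\phi_0 = L\phi_a + L\phi_b + L\phi_c$ is a decomposition of zero into the pairwise orthogonal spaces $Y_a$, $Y_b$, $Y_c$. Hence $L\phi_a = L\phi_b = L\phi_c = 0$, so each $\phi_\tau$ individually lies in $N(L)$. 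But Hypothesis~\ref{hyp:1dk} says $N(L)$ is one-dimensional, spanned by $\phi_0$; since the $X_\tau$ are pairwise orthogonal and their sum is $\phi_0 \ne 0$, at most one of the $\phi_\tau$ can be nonzero, forcing $\phi_0 \in X_a$, $\phi_0 \in X_b$, or $\phi_0 \in X_c$.

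For part~(b), once we know $\phi_0$ lies in a single $X_\tau$, we need a computationally checkable criterion to distinguish the cases. The natural approach is to use the additional symmetry with respect to $x = 1/2$ recorded in Lemma~\ref{lem:symmx12} and Table~\ref{table:wtauaddsym}: functions in $X_a \oplus X_b$ and functions in $X_c$ have opposite parity about the midpoint (one even, one odd, with the assignment depending on whether $n$ is even or odd). So checking the parity of $\phi_0$ about $x = 1/2$ immediately separates $\phi_0 \in X_c$ from $\phi_0 \in X_a \cup X_b$. To further separate $X_a$ from $X_b$ inside $X_a \oplus X_b$, one can apply the averaging operator $\sum_{k=0}^{n-1} T_n^k$ to $\tilde{\phi}_0$: this annihilates $W_b$ and acts as $n\cdot I$ on $W_a$, so $\phi_0 \in X_a$ iff the average of $\tilde{\phi}_0$ over a period-$1/n$ window is nonzero; equivalently, in the cosine Fourier picture of Remark~\ref{rem:fourierabc}, one just checks whether the dominant wave number of $\phi_0$ is a multiple of $n$. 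I would state part~(b) as: the condition in Table~\ref{table:phi0force} detecting $\phi_0 \in X_b$ (resp.\ $\phi_0 \in X_c$) is exactly the corresponding parity-and-divisibility test, which follows from these characterizations together with part~(a).

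For part~(c), assume $\phi_0 \notin X_a$, so $N(L) \subset X_b \oplus X_c$ and in particular $N(L) \cap X_a = \{0\}$. Thus the restriction $L|_{X_a} : X_a \to Y_a$ (well-defined with range in $Y_a$ by Lemma~\ref{lem:subsets}) is injective. For surjectivity, note that $R(L)$ has codimension one in $Y$ by Hypothesis~\ref{hyp:1dk}, with $Y = \tilde{Y} \oplus R(L)$ where $\tilde{Y} = R(P)$ is one-dimensional. By~\eqref{lem:subsets3} we have $P(Y_a) \subset Y_a$, and $\tilde{Y} = R(P)$; since $\phi_0 \notin X_a$, an adjoint or Fredholm-alternative argument shows the one-dimensional obstruction space $\tilde{Y}$ is not contained in $Y_a$ — more directly, $L[X_a] = R(L) \cap Y_a$ by~\eqref{lem:subsets3}, and if this were a proper subspace of $Y_a$ then $\tilde Y \cap Y_a \ne \{0\}$, i.e.\ the cokernel direction would lie in $Y_a$, which by the orthogonal-projection duality between $N(L)$ and $\tilde{Y}$ would force $\phi_0 \in X_a$, a contradiction. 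Hence $L[X_a] = Y_a$, and combined with injectivity, $L : X_a \to Y_a$ is bijective.

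The main obstacle I anticipate is part~(c), specifically the cokernel argument: parts~(a) and~(b) are essentially bookkeeping with the already-established direct-sum and equivariance lemmas, but concluding that the one-dimensional obstruction space $\tilde{Y}$ avoids $Y_a$ requires linking the symmetry type of $\phi_0$ to the symmetry type of the functional $\psi_0^*$ cutting out $R(L)$. This is where the hypothesis that the projections $P$ and $Q$ are \emph{orthogonal} is essential (as the authors emphasize): orthogonality of $Q$ onto $N(L)$ together with $P(Y_\tau) \subset Y_\tau$ forces the cokernel to inherit the symmetry class of the kernel, so that $\phi_0 \notin X_a$ precisely excludes $\tilde Y \subset Y_a$. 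Making this duality precise — probably via a self-adjointness-type relationship between $L$ and its adjoint under the chosen inner products, or directly via $R(L)^\perp$ being spanned by the representative of $\phi_0$ under the isometry $\Delta^2 : X_a \to Y_a$ — is the step that needs the most care.
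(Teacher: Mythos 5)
Your part~(a) is exactly the paper's argument, and the midpoint-parity idea in part~(b) is also how the paper proceeds (realized concretely as the endpoint checks $\phi_0(0)\pm\phi_0(1)\neq 0$ via Lemma~\ref{lem:symmx12}). However, for separating $X_a$ from $X_b$ the paper does not use your annihilator/Fourier criterion: it uses the pointwise condition $\phi_0\left(\tfrac{1}{2n}\right)\neq 0$, justified by the observation~(\ref{Xafunctionoddat1o2n}) that every function in $X_a$ is odd about $x=1/(2n)$ and therefore vanishes there. Your alternative is correct in spirit but loosely stated: $\sum_{k=0}^{n-1}T_n^k$ is an alternating sum of shifted values, not an average over a window, and the Fourier test is that \emph{some} nonzero coefficient has wave number not divisible by $n$, not a statement about the dominant mode; in any case it does not verify the specific conditions of Table~\ref{table:phi0force}, which is what~(b) asserts.

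The genuine gap is in part~(c). Your reduction is sound up to the last step: with $P(Y_a)\subset Y_a$ and $L[X_a]=R(L)\cap Y_a$ from Lemma~\ref{lem:subsets} one gets $Y_a=(\tilde Y\cap Y_a)\oplus L[X_a]$, so surjectivity can only fail if $\tilde Y\subset Y_a$. But the implication ``$\tilde Y\subset Y_a\Rightarrow\phi_0\in X_a$'' is precisely the nontrivial content, and the justification you offer --- an ``orthogonal-projection duality between $N(L)$ and $\tilde Y$'' --- does not exist at this level of generality. Orthogonality of $P$ and $Q$, the equivariance inclusions $L[X_\tau]\subset Y_\tau$, and the index count are all compatible with the kernel lying in $X_b$ while the cokernel lies in $Y_a$: any $L$ whose restriction $X_a\to Y_a$ is injective with closed range of codimension one while the restriction $X_b\to Y_b$ is surjective with one-dimensional kernel has total index zero and satisfies every structural lemma you invoke. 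So some specific property of the Ohta--Kawasaki linearization must enter. The paper supplies it by a different route: since $T_nu_0=u_0$, the equilibrium satisfies Neumann conditions at $x=1/n$, functions in $X_a$ are determined by their restriction to $(0,1/n)$, and the restricted linearization $L_s$ on $(0,1/n)$ is Fredholm of index zero by \cite[Proposition~2.15]{lessard:sander:wanner:17a}; if $L[X_a]\neq Y_a$, then $L_s$ has a nontrivial kernel element, which extends to a kernel element of $L$ inside $X_a$, contradicting the one-dimensionality of $N(L)$ together with $\phi_0\notin X_a$. Your duality route can in fact be completed --- one shows that $\zeta=\Delta^{-1}z$ for $z\perp_Y R(L)$ weakly solves the same linearized equation with Neumann data and hence, by regularity, is a multiple of $\phi_0$, so that $\tilde Y$ is spanned by $\Delta\phi_0\in Y_\tau$ --- but that adjoint computation is exactly the missing piece; asserting it as a consequence of the orthogonality of $P$ and $Q$ is not a proof.
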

\begin{table}
  \begin{center}
  \begin{tabular}{ |c|c|c| }
    \hline
    & $\phi_0 \in X_b$ & $\phi_0 \in X_c$ \\
    \hline
    $n$ even & $\phi_0\left( \frac{1}{2n} \right) \neq 0$, \;
    $\phi_0(0)+\phi_0(1) \neq 0$ & $\phi_0(0)-\phi_0(1) \neq 0$ \\[1ex]
    $n$  odd & $\phi_0\left( \frac{1}{2n} \right) \neq 0$, \;
    $\phi_0(0)-\phi_0(1) \neq 0$ & $\phi_0(0)+\phi_0(1) \neq 0$ \\
    \hline
  \end{tabular}
  \caption{Easily verifiable conditions which ensure whether the kernel
           function~$\phi_0$ is contained in~$X_b$ or in~$X_c$.
           These conditions change depending on whether~$n$ in the 
           definition of the symmetry operator~$T_n$ is even or odd.
           Since all of these conditions are simple inequality checks
           for specific function values, they can readily be rigorously validated
           using interval arithmetic.
           \label{table:phi0force}}
  \end{center}
\end{table}
\begin{proof}
We begin by verifying~{\em (a)\/}. Since~$\phi_0 \in X$, we can find
$\phi_\tau \in X_\tau$ for $\tau = a,b,c$ such that the identity
$\phi_0 = \phi_a + \phi_b + \phi_c$ is satisfied. But then~(\ref{lem:subsets1})
implies $L[\phi_\tau] \in Y_\tau$, i.e., we have $0 = L[\phi_0] =
L[\phi_a] + L[\phi_b] + L[\phi_c] \in Y_a \oplus Y_b \oplus Y_c$, and
the properties of direct sums therefore furnish $L[\phi_\tau] = 0$ for
all $\tau = a,b,c$. Since $\phi_0 \neq 0$, we have to have $\phi_\tau \neq 0$
for at least one $\tau \in \{ a,b,c \}$. Since the null space of~$L$ is
one-dimensional, one then has to have the equality $\phi_0 = \alpha \phi_\tau
\in X_\tau$ for some $\alpha \neq 0$, which establishes the claim.

Consider now the statement in~{\em (b)\/} and suppose for the moment that
the integer~$n$ in the definition of~$T_n$ is even. Then in view of
Lemma~\ref{lem:symmx12}, see also Table~\ref{table:wtauaddsym}, every
function in the direct sum~$X_a \oplus X_b$ is even with respect to
$x = 1/2$, while every function in~$X_c$ is odd with respect to the
center of the interval~$\Omega$. By taking the contrapositive, if we know
that the inequality $\phi_0(0)+\phi_0(1) \neq 0$ is true, then~$\phi_0$
cannot be odd with respect to $x = 1/2$, and therefore has to be in~$X_a$
or~$X_b$. Similarly, the inequality $\phi_0(0)-\phi_0(1) \neq 0$ shows
that~$\phi_0$ is not even, and so it has to be in~$X_c$. The case of
odd~$n$ can be treated similarly.

It remains to show that if we have the inclusion~$\phi_0 \in X_a \cup X_b$,
as well as $\phi_0(1/(2n)) \neq 0$, then necessarily one has $\phi_0 \in X_b$.
This, however, follows immediately from~(\ref{Xafunctionoddat1o2n}), where
it was demonstrated that every function in~$X_a$ is odd with respect
to~$x = 1/(2n)$, and therefore has to vanish at~$1/(2n)$. This completes
the proof of~{\em (b)\/}.

Finally, we turn our attention to the statement in~{\em (c)\/}. Suppose 
therefore that~$\phi_0$ is contained in either~$X_b$ or~$X_c$. Since we
assumed $T_n u_0 = u_0$, the function~$u_0$ satisfies homogeneous Neumann
boundary conditions at both $x=0$ and at $x=1/n$. Now let~$u_s$ denote the
restriction of~$u_0$ to the interval~$\Omega_s = (0,1/n)$. Then we still
have $F(\lambda_0, u_s) = 0$ on this interval, with the same Neumann boundary
conditions that were considered for the original equation on $\Omega = (0,1)$.
Define the function spaces~$X_s$ and~$Y_s$ as the spaces corresponding to~$X$
and~$Y$, but consisting of functions restricted to the smaller interval~$\Omega_s$,
which in the case of~$X_s$ satisfy homogeneous Neumann boundary conditions
on~$\partial\Omega_s$. Finally, define $L_s = D_u F(\lambda_0,u_s)|_{X_s}$,
i.e., via restriction to the interval~$(0,1/n)$. 

We begin by verifying $L[X_s]  = Y_s$. For this, assume that $Y_s \setminus L_s[X_s]
\ne \emptyset$. Then there exists a nontrivial element which is not in the range
of~$L_s$. Therefore, since also the restricted operator is Fredholm with index~$0$ according
to~\cite[Proposition~2.15]{lessard:sander:wanner:17a}, there has to be a nontrivial
null space element $\phi_s \in N(L_s)$. Recall that any function in~$X_a$ is
uniquely defined by its values on~$\Omega_s = (0,1/n)$. Thus, since~$\phi_s$ satisfies
homogeneous Neumann boundary conditions on~$\Omega_s$, there is a corresponding unique
function $\phi_a \in X_a$ defined on~$\Omega = (0,1)$ and such that $\phi_a = \phi_s$
on~$(0,1/n)$. Moreover, the fact that $L_s \phi_s = 0$ on~$\Omega_s$ immediately
implies that $L\phi_a = 0$ on~$\Omega$. Therefore, the function~$\phi_a \in X_a$
is contained in the null space~$N(L)$. However, this is a contradiction, since
we have assumed that~$N(L)$ is one-dimensional, and that it is spanned by a
function~$\phi_0$ which is contained in either~$X_b$ or~$X_c$. Thus we conclude
that $L[X_a] = Y_a$. Since the above argument also directly implies $N(L_s) = \{ 0 \}$,
this completes the proof of the proposition.
\end{proof}
%
%
%
%
\section{Cyclic Equivariant Pitchfork Bifurcations}
\label{sec:pitbif}
After the preparations of the last section, we now show that the cyclic action of the
symmetry operator~$T_n$, through its derived space decomposition $X = X_a \oplus X_b
\oplus X_c$, does indeed give rise to symmetry-breaking bifurcations. More precisely,
we consider the scenarios indicated in Table~\ref{table:bifscenarios}. In this table,
we distinguish between the parity of the integer~$n$ and the symmetry with respect to
$x = 1/2$ of the kernel function~$\phi_0$. This leads to four different bifurcation 
scenarios, all of which break the $\Z_{2n}$-symmetry of the equilibrium solution
$u_0 \in X_a$, based on whether one has $\phi_0 \in X_b$ or $\phi_0 \in X_c$. Recall
that the latter two conditions can easily be verified using the tests listed in
Table~\ref{table:phi0force}. While all of these scenarios are covered by the theory
developed in the present paper, cases~(b) and~(c) could already be established using
the results of~\cite{lessard:sander:wanner:17a}. However, the cases~(a) and~(d) are
new and do require our new approach, and they cover all of the situations shown in
Figures~\ref{fig:oddodd} and~\ref{fig:eveneven}.
\begin{table}
  \begin{center}
  \begin{tabular}{ |c||c|c| } 
    \hline
    & $\phi_0$ even & $\phi_0$ odd \\ [0.5ex]
    \hline\hline
    $n$ even & (a) $u_0$ even & (b) $u_0$ even \\
             & $\phi_0 \in X_b$ & $\phi_0 \in X_c$ \\
    \hline
    $n$ odd  & (c) $u_0$ odd & (d) $u_0$ odd \\
             & $\phi_0 \in X_c$ & $\phi_0 \in X_b$ \\
    \hline
  \end{tabular}
  \caption{Symmetry-breaking pitchfork bifurcation scenarios if the
           equilibrium solution~$u_0$ satisfies $u_0 \in X_a$, i.e., we
           have $T_n u_0 = u_0$. Throughout the table, the labels even
           and odd refer to symmetries with respect to the center point
           $x = 1/2$ of the domain $\Omega = (0,1)$. Notice that~(b)
           and~(c) are covered by our previous $\Z_2$-pitchfork bifurcation
           theorem~\cite{lessard:sander:wanner:17a}. In contrast, the 
           remaining two cases~(a) and~(d) correspond to the new scenarios
           depicted in Figures~\ref{fig:oddodd} and~\ref{fig:eveneven},
           respectively.
           \label{table:bifscenarios}}
  \end{center}
\end{table}

To develop this new approach, we proceed as follows. In Section~\ref{subsec:pitbif1}
we use a standard Lyapunov-Schmidt reduction based on our underlying space decomposition
to provide a sufficient condition for the existence of a $\Z_{2n}$-symmetry breaking
pitchfork bifurcation. After that, Section~\ref{subsec:pitbif2} demonstrates that 
the assumptions of this result can be verified using the existence of an isolated 
zero of a suitable extended system. This reformulation of the existence result
makes it amenable to verification via computer-assisted proof techniques.
\subsection{A sufficient condition for pitchfork bifurcation}
\label{subsec:pitbif1}
We begin by concentrating on the derivation of a sufficient condition
for the existence of a pitchfork bifurcation which breaks the $\Z_{2n}$-symmetry.
For this, we rely on the Lyapunov-Schmidt reduction result in
Proposition~\ref{prop:lyap} below, which gives a general method of reducing
the bifurcation problem from an infinite-dimensional problem to a bifurcation
problem on a one-dimensional subspace. Our formulation is completely analogous
to the one used in~\cite{lessard:sander:wanner:17a}. Thus, we refer the reader
to this paper for the full proof and merely provide a brief sketch below to
keep the current paper self-contained.
\begin{proposition}[Lyapunov-Schmidt reduction]
\label{prop:lyap}
Let $\Omega = (0,1)$, consider the mass $\mu = 0$, and let the function~$f$ be
a smooth and odd nonlinearity. Furthermore, let~$F : X \to Y$ be defined as
in~(\ref{def:Flambdau}) and~(\ref{def:xyspaces}), and suppose that
Hypotheses~\ref{hyp:fred} and~\ref{hyp:1dk} are satisfied, i.e., we have
both $F(\lambda_0,u_0) = 0$ and $L\phi_0 = D_u F(\lambda_0,u_0)[\phi_0] = 0$,
and the null space of~$L$ is one-dimensional. Finally, let~$P$ and~$Q$ denote
the orthogonal projections from Definition~\ref{def:orthproj}.

Then there exist a neighborhood~$\Lambda_0$ of~$\lambda_0$, a neighborhood~$V_0$
of~$v_0 = Q u_0  \in N(L)$, a smooth function $W : \Lambda_0 \times V_0 \to \tilde{X}$,
as well as a smooth real-valued function~$b$ which is defined in a neighborhood of
the point~$(\lambda_0,0) \in \R^2$ such that the following hold:
\begin{itemize}
\item[(a)] If~$(\lambda,\alpha)$ is sufficiently close to the
point~$(\lambda_0,0) \in \R^2$ and satisfies $b(\lambda,\alpha) = 0$,
then we have
\begin{displaymath}
  F(\lambda, u) = 0
  \quad\mbox{ for }\quad
  u = v_0 + \alpha \phi_0 + W\left( \lambda, v_0 + \alpha \phi_0 \right) \; .
\end{displaymath}
\item[(b)] Conversely, if~$(\lambda,u)$ is close enough
to~$(\lambda_0,u_0)$ and solves~$F(\lambda,u)=0$, then
for~$\alpha$ defined via $v_0 + \alpha \phi_0 = Qu$ we
have~$b(\lambda,\alpha) = 0$ and $u = Qu + W(\lambda,Qu)$.
\end{itemize}
In other words, the solution set of $b(\lambda,\alpha)=0$ in a neighborhood
of $(\lambda_0,0) \in \R^2$ is in one-to-one correspondence with the solution
set of $F(\lambda,u) = 0$ in a neighborhood of $(\lambda_0,u_0)$. 
\end{proposition}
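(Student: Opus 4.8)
The plan is to follow the classical Lyapunov--Schmidt reduction procedure, which is entirely standard once one has the orthogonal projections $P$ and $Q$ from Definition~\ref{def:orthproj}. First I would split the equation $F(\lambda,u) = 0$ using the projection $P$: writing $u = v + w$ with $v = Qu \in N(L)$ and $w = (I-Q)u \in \tilde{X}$, the equation becomes equivalent to the pair
\begin{displaymath}
  (I-P)\, F(\lambda, v+w) = 0
  \qquad\mbox{ and }\qquad
  P\, F(\lambda, v+w) = 0 \; .
\end{displaymath}
The operator $(I-P) D_u F(\lambda_0,u_0)$ restricted to $\tilde{X}$ is, by the Fredholm Hypothesis~\ref{hyp:fred} and the one-dimensional kernel Hypothesis~\ref{hyp:1dk}, a bounded bijection from $\tilde{X}$ onto $R(L) = N(P)$. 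Hence by the implicit function theorem applied to the first (``auxiliary'') equation near $(\lambda_0, v_0, w_0)$ with $w_0 = (I-Q)u_0$, one obtains a smooth map $W : \Lambda_0 \times V_0 \to \tilde{X}$ with $W(\lambda_0, v_0) = w_0$ and such that, for $(\lambda, v)$ near $(\lambda_0, v_0)$, the first equation holds if and only if $w = W(\lambda, v)$.

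Next I would substitute $w = W(\lambda, v)$ into the second (``bifurcation'') equation. Since $N(L)$ is one-dimensional and spanned by $\phi_0$, I parametrize $v = v_0 + \alpha \phi_0$ and define
\begin{displaymath}
  b(\lambda, \alpha) \; = \;
  \psi_0^* \Bigl( F\bigl(\lambda,\, v_0 + \alpha\phi_0 + W(\lambda, v_0 + \alpha\phi_0)\bigr) \Bigr) \; ,
\end{displaymath}
where $\psi_0^* \in Y^*$ is the functional from Hypothesis~\ref{hyp:1dk} whose kernel is $R(L)$; concretely, since $R(P)$ is one-dimensional, $P F(\lambda, v+w) = 0$ is equivalent to the scalar equation $b(\lambda,\alpha) = 0$ after identifying $R(P)$ with $\R$ via $\psi_0^*$. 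Smoothness of $b$ follows from smoothness of $F$ and of $W$. At this point statements~(a) and~(b) of the proposition are obtained simply by unwinding the equivalences: if $b(\lambda,\alpha) = 0$ then both the auxiliary and bifurcation equations hold, so $F(\lambda, u) = 0$ for the displayed $u$; conversely, if $F(\lambda, u) = 0$ near $(\lambda_0, u_0)$, then setting $v = Qu = v_0 + \alpha\phi_0$ forces $w = (I-Q)u = W(\lambda, Qu)$ by uniqueness in the implicit function theorem, and then $b(\lambda,\alpha) = 0$. The final ``one-to-one correspondence'' sentence is just the combination of~(a) and~(b).

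Since the excerpt explicitly says the formulation is ``completely analogous to the one used in~\cite{lessard:sander:wanner:17a}'' and only a brief sketch is intended, I would keep the write-up short and cite that reference for the detailed verification of the neighborhoods and the smoothness bookkeeping. The only mild subtlety --- and the one place I would be slightly careful --- is making sure the domains and neighborhoods $\Lambda_0$, $V_0$ are chosen consistently so that $W$ is defined exactly where needed and so that the correspondence in~(b) genuinely captures \emph{all} nearby solutions (this uses the local uniqueness granted by the implicit function theorem applied to the auxiliary equation). I do not expect any real obstacle here; the symmetry decomposition $X = X_a \oplus X_b \oplus X_c$ plays no role in this particular proposition --- it enters only in the subsequent analysis of the reduced function $b$ --- so the proof is purely the textbook Lyapunov--Schmidt argument.
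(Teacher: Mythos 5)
Your proposal is correct and follows essentially the same route as the paper: split $u = Qu + (I-Q)u$, solve the range equation $(I-P)F(\lambda,v+w)=0$ for $w = W(\lambda,v)$ via the implicit function theorem using the invertibility of $(I-P)L|_{\tilde{X}} = L|_{\tilde{X}} : \tilde{X} \to R(L)$, and define $b$ through $\psi_0^*$ (your $\psi_0^*(F(\cdot))$ agrees with the paper's $\psi_0^*(PF(\cdot))$ since $\psi_0^*$ annihilates $R(L) = N(P)$). Both your sketch and the paper's defer the routine bookkeeping to the cited reference, so no gap.
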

\begin{proof}
Due to the properties of the projections~$P$ and~$Q$, if we associate with every
$u \in X$ the two elements $v = Qu$ and $w = (I-Q)u$, then we clearly have
$u = v + w \in N(L) \oplus \tilde{X}$. Moreover, the nonlinear problem $F(\lambda,u) = 0$
is equivalent to the system
\begin{equation} \label{eqn:lyapg}
  P F(\lambda,v+w) = 0
  \qquad\mbox{ and }\qquad
  G(\lambda,v,w) := (I-P) F(\lambda,v+w) = 0 \; .
\end{equation}
One can show that the function $G: \R \times N(L) \times \tilde{X} \to R(L)$ introduced
in the second equation of this system has an invertible Fr\'echet derivative
$D_wG(\lambda_0,v_0, u_0-v_0) \in \cL(\tilde{X},R(L))$, and therefore the implicit
function theorem implies that given~$(\lambda,v)$ near~$(\lambda_0,v_0)$, there exists a
unique $w = W(\lambda,v)$ in~$\tilde{X}$ such that $G(\lambda,v,W(\lambda,v)) = 0$.
Since~$N(L)$ is one-dimensional, each $v \in N(L)$ has a unique representation of the
form $v_0 + \alpha \phi_0$. Given a nontrivial~$\psi_0^* \in Y^*$ such that $R(L) =
N(\psi_0^*)$, the function~$b$ is then defined as
\begin{equation} \label{eqn:lyapb}
  b(\lambda,\alpha) =
  \psi_0^*( PF(\lambda, v_0 + \alpha \phi_0 + W(\lambda,v_0+\alpha \phi_0))) \; . 
\end{equation}
The statements of the proposition now follow easily from the fact that the
solutions of the original problem $F(\lambda,u) = 0$ are in one-to-one correspondence
with the solutions of~(\ref{eqn:lyapg}).
\end{proof}

\medskip
The above proposition is the standard version of the Lyapunov-Schmidt reduction,
which does not account for any symmetry properties of the nonlinear operator~$F$.
Note, however, that we do require that the two projections~$P$ and~$Q$ are
orthogonal projections, and in that way the results from the last section allow
us to make a number of additional deductions as long as we assume that the equilibrium
solution~$u_0$ is contained in~$X_a$, while the kernel function~$\phi_0$ is contained
in either~$X_b$ or~$X_c$. More precisely, we will soon see that the following hold:
\begin{itemize}
\item Since the spaces in the decomposition~$X = X_a \oplus X_b \oplus X_c$
are pairwise orthogonal, we automatically obtain $Q[X_a] = \{ 0 \}$.
\item In view of~$F(\lambda,X_a) \subset Y_a$ and the assumption on~$\phi_0$,
there exists a unique branch of equilibrium solutions through~$(\lambda_0,u_0)$
which is contained in~$X_a$, and the application of the projection~$Q$ transforms
this branch into a trivial solution branch in~$\R \times N(L)$.
\item The construction of the bifurcation equation~$b(\lambda,\alpha) = 0$ in
the above proposition then readily implies that~$b(\lambda,0) = 0$ for all~$\lambda$
in a neighborhood of~$\lambda_0$. In fact, since we also assumed the oddness
of the nonlinearity~$f$, we can even make statements about the vanishing of certain
derivatives of the function~$b$.
\end{itemize}
These observations are explained in more detail in the following main result
of this subsection. It provides conditions under which the $\Z_{2n}$-equivariance
of the last section forces a symmetry-breaking pitchfork bifurcation. This result
is similar in spirit to~\cite[Proposition~2.11]{lessard:sander:wanner:17a},
as well as to the classical result~\cite{crandall:rabinowitz:71a}.
\begin{theorem}[Existence of $\Z_{2n}$-symmetry breaking pitchfork bifurcation]
\label{thm:bifexist}
Let $\Omega = (0,1)$, consider the mass $\mu = 0$, and let the function~$f$ be
a smooth and odd nonlinearity. Furthermore, let~$F : X \to Y$ be defined as
in~(\ref{def:Flambdau}) and~(\ref{def:xyspaces}), and suppose that
Hypotheses~\ref{hyp:fred} and~\ref{hyp:1dk} are satisfied, i.e., we have
both $F(\lambda_0,u_0) = 0$ and $L\phi_0 = D_u F(\lambda_0,u_0)[\phi_0] = 0$,
and the null space of~$L$ is one-dimensional. Finally, let~$P$ and~$Q$ denote
the orthogonal projections from Definition~\ref{def:orthproj}.
In addition, suppose that $u_0 \in X_a$ and that $\phi_0 \in X_\tau$ for
$\tau \in \{ b,c \}$. Then there exists a unique function $\xi_0 \in X_a$
such that
\begin{equation} \label{thm:bifexist1}
  L\xi_0 + (I-P) D_\lambda F(\lambda_0,u_0) = 0 \; ,
\end{equation}
and if we further suppose that the nondegeneracy condition
\begin{equation} \label{thm:bifexist2}
  D_{\lambda u} F(\lambda_0,u_0)[ \phi_0 ] +
    D_{uu} F(\lambda_0,u_0)[\phi_0,\xi_0] \notin R(L)
\end{equation}
is satisfied, then the point~$(\lambda_0,u_0)$ is a pitchfork bifurcation
point for the nonlinear operator~$F$.
\end{theorem}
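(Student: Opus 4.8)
The plan is to exploit the Lyapunov–Schmidt reduction from Proposition~\ref{prop:lyap} together with the symmetry-induced decomposition $X = X_a \oplus X_b \oplus X_c$ to show that the reduced bifurcation function $b(\lambda,\alpha)$ has the classical pitchfork normal form structure: $b(\lambda,0) = 0$ identically, $\partial_\alpha b(\lambda_0,0) = 0$, $\partial_{\alpha\alpha} b(\lambda_0,0) = 0$, while the two nondegeneracy quantities $\partial_{\alpha\alpha\alpha} b(\lambda_0,0) \neq 0$ and $\partial_{\lambda\alpha} b(\lambda_0,0) \neq 0$ hold, with the latter being exactly the hypothesis~\eqref{thm:bifexist2}. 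First I would establish existence and uniqueness of $\xi_0 \in X_a$ solving~\eqref{thm:bifexist1}: since $\phi_0 \notin X_a$, part~(c) of Proposition~\ref{prop:phi0} gives $L : X_a \to Y_a$ bijective; because $D_\lambda F(\lambda_0,u_0) \in Y_a$ by Lemma~\ref{lem:xapreserved} and $P(Y_a) \subset Y_a$ by~\eqref{lem:subsets3}, the element $(I-P)D_\lambda F(\lambda_0,u_0)$ lies in $Y_a \cap R(L) = L[X_a]$, so $\xi_0$ exists and is unique in $X_a$. This $\xi_0$ is, up to the trivial kernel direction, the $\lambda$-derivative of the trivial branch $W(\lambda, v_0)$ at $\lambda_0$.

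Next I would pin down the trivial-branch structure. Since $u_0 \in X_a$ and the $X$-decomposition is orthogonal with $N(L) = \mathrm{span}(\phi_0) \subset X_\tau$ for $\tau \in \{b,c\}$, we get $Qu_0 = 0$, so $v_0 = 0$ and the reduction is centered at $\alpha = 0$. Because $F(\lambda, X_a) \subset Y_a$ and $L|_{X_a}$ is an isomorphism onto $Y_a$, the implicit function theorem (applied within $X_a$) yields a unique branch $\lambda \mapsto u_a(\lambda) \in X_a$ of solutions of $F(\lambda,u)=0$ with $u_a(\lambda_0) = u_0$; orthogonality forces $Q u_a(\lambda) = 0$, and matching this branch against the Lyapunov–Schmidt parametrization shows $W(\lambda, 0) = u_a(\lambda) - u_0 \in X_a$ and hence $b(\lambda, 0) = 0$ for all $\lambda$ near $\lambda_0$. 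Differentiating this identity in $\lambda$ gives $\partial_\lambda b(\lambda_0,0) = 0$ and $\partial_{\lambda\lambda}b(\lambda_0,0)=0$. The oddness of $f$ gives $F(\lambda,-u) = -F(\lambda,u)$, which translates into $b(\lambda, -\alpha) = -b(\lambda,\alpha)$ (the reflection $u \mapsto -u$ acts as $\alpha \mapsto -\alpha$ on $N(L)$ and commutes with $P$, $Q$, $W$); hence all even-order $\alpha$-derivatives of $b$ vanish, in particular $\partial_{\alpha\alpha}b \equiv 0$. Combined with $\partial_\alpha b(\lambda,0) = 0$ (which follows from $b(\lambda,0)=0$), the only surviving low-order terms are the mixed term $\partial_{\lambda\alpha}b(\lambda_0,0)$ and the cubic term $\partial_{\alpha\alpha\alpha}b(\lambda_0,0)$ — exactly the pitchfork scenario.

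It then remains to compute $\partial_{\lambda\alpha} b(\lambda_0,0)$ and identify it with the quantity in~\eqref{thm:bifexist2}. Using the explicit formula~\eqref{eqn:lyapb} for $b$ and implicitly differentiating the defining relation $G(\lambda, v_0 + \alpha\phi_0, W) = 0$, a standard computation yields
\begin{displaymath}
  \partial_{\lambda\alpha} b(\lambda_0,0) \;=\;
  \psi_0^*\Bigl( D_{\lambda u}F(\lambda_0,u_0)[\phi_0] +
    D_{uu}F(\lambda_0,u_0)[\phi_0, W_\lambda] \Bigr),
\end{displaymath}
where $W_\lambda = \partial_\lambda W(\lambda_0, 0)$ satisfies $L W_\lambda + (I-P)D_\lambda F(\lambda_0,u_0) = 0$, i.e., $W_\lambda = \xi_0$ by uniqueness (both lie in $X_a$: $W_\lambda \in X_a$ because the trivial branch stays in $X_a$). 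Since $\psi_0^* \circ P = \psi_0^*$ and $R(L) = N(\psi_0^*)$, the condition~\eqref{thm:bifexist2} is precisely $\partial_{\lambda\alpha}b(\lambda_0,0) \neq 0$. Finally, I would invoke the standard one-variable pitchfork bifurcation theorem applied to $b$: from $b(\lambda,0)\equiv 0$, $\partial_{\alpha\alpha}b(\lambda_0,0)=0$, $\partial_{\lambda\alpha}b(\lambda_0,0)\neq 0$, and either $\partial_{\alpha\alpha\alpha}b(\lambda_0,0)\neq 0$ or — if the cubic term needs justifying, as it does not appear among the hypotheses — a further symmetry argument giving a nontrivial bifurcating branch via the implicit function theorem applied to $b(\lambda,\alpha)/\alpha$. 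I expect the main obstacle to be precisely this last point: the theorem as stated assumes only the single nondegeneracy condition~\eqref{thm:bifexist2}, so either the cubic term must be absorbed into a weaker notion of pitchfork (a bifurcating branch that need not be generic), or an additional hidden nondegeneracy is implicitly being invoked; resolving exactly which pitchfork conclusion~\eqref{thm:bifexist2} alone buys us — and carefully tracking the action of the reflection symmetry through $W$ to get $b(\lambda,-\alpha) = -b(\lambda,\alpha)$ — will be the delicate part of the argument.
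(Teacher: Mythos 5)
Your proposal follows essentially the same route as the paper's proof: Lyapunov--Schmidt reduction centered at $v_0 = Qu_0 = 0$ (forced by the orthogonality of $X_a$ and $X_\tau$), the trivial branch confined to $X_a$ via the implicit function theorem for the restriction $F:\R\times X_a\to Y_a$, oddness of $F$ pushed through $W$ to obtain $b(\lambda,-\alpha)=-b(\lambda,\alpha)$ and hence $b(\lambda,0)\equiv 0$ and $b_{\alpha\alpha}(\lambda,0)=0$, and finally the implicit function theorem applied to $r=b/\alpha$ with $r_\lambda(\lambda_0,0)=b_{\lambda\alpha}(\lambda_0,0)=\psi_0^*\bigl(D_{\lambda u}F(\lambda_0,u_0)[\phi_0]+D_{uu}F(\lambda_0,u_0)[\phi_0,\xi_0]\bigr)\neq 0$, which is exactly~\eqref{thm:bifexist2}; your construction of $\xi_0$ from Proposition~\ref{prop:phi0}(c), Lemma~\ref{lem:xapreserved} and~\eqref{lem:subsets3}, and the identification $\partial_\lambda W(\lambda_0,0)=\xi_0$, also match the paper. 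The delicate step you single out, transporting $u\mapsto -u$ through the reduction, is handled in the paper exactly as you sketch it: $G(\lambda,-v,-w)=-G(\lambda,v,w)$ plus the uniqueness of $W$ from Proposition~\ref{prop:lyap} yields $W(\lambda,-v)=-W(\lambda,v)$ and then the oddness of $b$ in $\alpha$.

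Three smaller points. First, the dichotomy you leave open at the end is resolved by the paper in the first of your two ways: ``pitchfork'' is used in the weak sense that the bifurcating branch $\alpha\mapsto\bigl(h(\alpha),\alpha\phi_0+W(h(\alpha),\alpha\phi_0)\bigr)$ is tangent to $\{\lambda_0\}\times X$, i.e.\ $h'(0)=0$, which follows from $b_{\alpha\alpha}(\lambda_0,0)=0$ via the expansion of $r$; no cubic nondegeneracy is assumed, and the condition $\rho\neq 0$ from~\eqref{thm:bifexist3} governing the parabolic shape is deliberately relegated to the remark following the theorem. So once the oddness of $b$ is in hand you already have everything needed, and no hidden hypothesis is invoked. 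Second, a slip in justification: $b_\alpha(\lambda_0,0)=0$ does not follow from $b(\lambda,0)\equiv 0$ (that identity only kills $\lambda$-derivatives along $\alpha=0$); it is the standard Lyapunov--Schmidt identity $b_\alpha(\lambda_0,0)=\psi_0^*\bigl(PL[\phi_0+D_vW(\lambda_0,0)\phi_0]\bigr)=0$, valid because $PL=0$ since $N(P)=R(L)$ and $L\phi_0=0$; the paper simply quotes this from~\cite{lessard:sander:wanner:17a}, and you need it so that $r(\lambda_0,0)=0$ before applying the implicit function theorem to $r$. Third, in the paper's normalization $W(\lambda,v)$ is the full $\tilde X$-component, not a deviation from $u_0$, so with $v_0=0$ one has $W(\lambda,0)=u_a(\lambda)$ rather than your $u_a(\lambda)-u_0$; this is only bookkeeping, but it matters when writing out the oddness argument for $W$ precisely.
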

\begin{proof}
Consider the function~$G(\lambda,v,w)$ defined in~\eqref{eqn:lyapg}, as well
as~$b(\lambda,\alpha)$ introduced in~\eqref{eqn:lyapb}. Furthermore, let
$v_0 = Q u_0 \in N(L)$ as in Proposition~\ref{prop:lyap}. Then our assumption
$u_0 \in X_a$, combined with the fact that~$\phi_0 \in X_\tau$ for $\tau \neq a$,
that~$Q : X \to N(L)$ is an orthogonal projection, and the orthogonality statement
from Lemma~\ref{lem:pairwiseorthogonal}, immediately yield both~$v_0 = 0$ and
$N(L|_{X_a}) = \{ 0 \}$. Furthermore, the definition of~$\psi_0^*$ and \cite[Table 1]{lessard:sander:wanner:17a} yield
\begin{displaymath}
  b_\alpha(\lambda_0,0)  = \psi_0^* D_u F(\lambda_0,u_0) = 0 \; .
\end{displaymath}
The oddness of the nonlinearity~$f$ and~$\mu = 0$ further show
that~$F(\lambda,-u) = -F(\lambda,u)$ for arbitrary $\lambda \in \R$ and
$u \in X$, and this in turn implies
\begin{displaymath}
  G(\lambda, -v, -w) =
  (I-P) F(\lambda,-(v + w)) =
  -(I-P) F(\lambda,v + w) =
  -G(\lambda, v,w) \; .
\end{displaymath}
Thus, the function~$w = -W(\lambda,v) \in \tilde{X}$ solves the equation
$G(\lambda,-v,w)=0$, and the uniqueness property of~$W$ established in
Proposition~\ref{prop:lyap} then gives $-W(\lambda,v) = W(\lambda,-v)$.
One then obtains
\begin{displaymath}
  -b(\lambda,\alpha) =
  -\psi_0^* P F(\lambda,\alpha \phi_0 + W(\lambda,\alpha \phi_0)) =
  \psi_0^* P F(\lambda, -\alpha \phi_0 + W(\lambda,-\alpha \phi_0)) =
  b(\lambda,-\alpha) \; ,
\end{displaymath}
which immediately gives the trivial solution~$b(\lambda,0)=0$ for all
$\lambda \in \R$, as well as $b_{\alpha\alpha}(\lambda,0) = 0$. 

If we now again apply Proposition~\ref{prop:lyap}, then the trivial
solution of~$b$ gives rise to the smooth solution curve~$\lambda \mapsto
W(\lambda,0) \in \tilde{X}$. In fact, one can show that this solution
branch lies in~$X_a$, since in view of~$N(L|_{X_a}) = \{ 0 \}$ we can 
apply the implicit function theorem to the restriction~$F : \R \times
X_a \to Y_a$.

In order to establish the bifurcating branch which breaks the $X_a$-symmetry,
we define a function~$r$ in a neighborhood of~$(\lambda_0,0)$ by setting
\begin{displaymath}
  r(\lambda,\alpha) = \left\{ \begin{array}{rcl}
                        \DS \frac{b(\lambda,\alpha)}{\alpha} & \mbox{for} &
                          \alpha \neq 0 \; , \\[3ex]
                        \DS b_\alpha(\lambda,0)
                          & \mbox{for} & \alpha = 0 \; .
                      \end{array} \right.
\end{displaymath}
One can easily show that~$r$ is smooth. Moreover, one can show as
in~\cite[Proposition~2.11]{lessard:sander:wanner:17a} that~$r$ has the
expansion
\begin{displaymath}
  r(\lambda_0 + \nu, \alpha) =  
    \nu b_{\lambda\alpha}(\lambda_0,0) +
    \frac{\nu^2}{2} b_{\lambda\lambda\alpha}(\lambda_0,0) +
    \frac{\alpha \nu}{2} b_{\lambda\alpha\alpha}(\lambda_0,0) +
    \frac{\alpha^2}{6} b_{\alpha\alpha\alpha}(\lambda_0,0) +
    R(\nu,\alpha)
\end{displaymath}
with $R(\nu,\alpha) = O(\| (\nu,\alpha) \|^3)$. One clearly has
$r(\lambda_0, 0) = b_\alpha(\lambda_0,0) = 0$. Furthermore, it was 
shown in~\cite[Proposition~2.11]{lessard:sander:wanner:17a} that
\begin{displaymath}
  r_{\lambda}(\lambda_0,0) = b_{\lambda\alpha}(\lambda_0,0) =
  \psi_0^* D_{\lambda u}F(\lambda_0,u_0)[\phi_0] +
  \psi_0^* D_{uu}F(\lambda_0,u_0)[\phi_0,\xi_0] \neq 0 \; ,
\end{displaymath}
with~$\xi_0$ as defined uniquely in~(\ref{thm:bifexist1}). The implicit
function theorem then yields a smooth function~$\alpha \mapsto h(\alpha)$
which is defined near~$\alpha = 0$, satisfies $h(0) = \lambda_0$,
and such that in a neighborhood of~$(0,0)$ one has
\begin{displaymath}
  r(\lambda, \alpha) = 0
  \quad\mbox{ if and only if }\quad
  \lambda = h(\alpha) \; .
\end{displaymath}
This establishes the second solution branch $\alpha \mapsto
\alpha \phi_0 + W(h(\alpha), \alpha \phi_0)$. Finally, one can
follow the proof of~\cite[Proposition~2.11]{lessard:sander:wanner:17a}
verbatim to show that the two solution curves together form an
actual pitchfork bifurcation. This is accomplished by deriving an
explicit formula for~$h'(0)$ and showing that it vanishes, which 
then completes the proof of the theorem.
\end{proof}
\begin{figure}
\begin{center}
\includegraphics[width=0.95\textwidth]{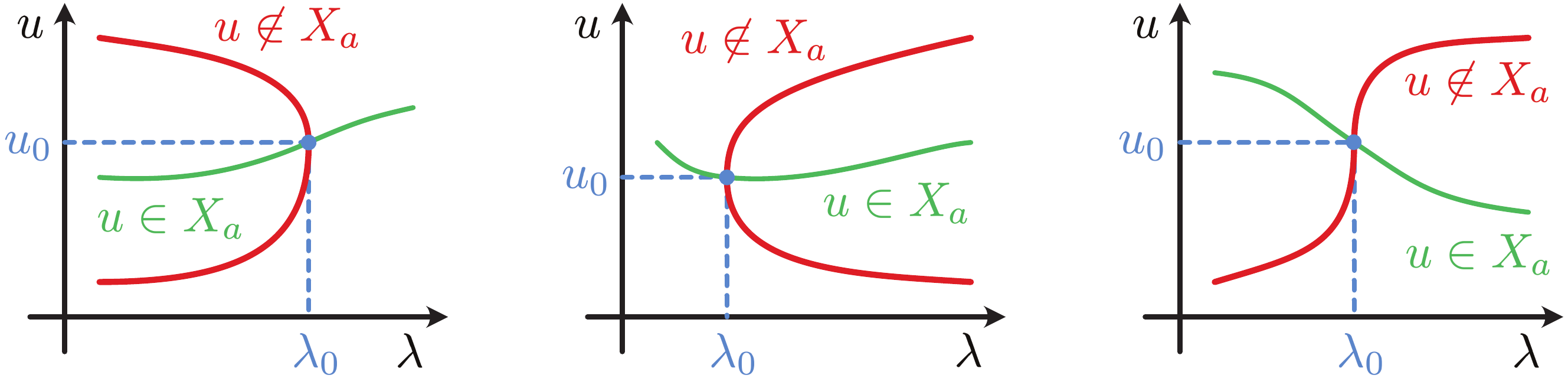}
\caption{\label{fig:symbifsketch}
         Possible symmetry-breaking pitchfork bifurcation
         scenarios in Theorem~\ref{thm:bifexist}. In the generic case,
         i.e., if the constant~$\rho$ defined in~(\ref{thm:bifexist3})
         is nonzero, one observes a classic pitchfork bifurcation ---
         and the cases $\rho > 0$ and $\rho < 0$ are shown in the first
         and second panels, respectively. However, in the case $\rho = 0$
         one could observe a situation depicted in the right-most panel.}
\end{center}
\end{figure}

\medskip
We would like to point out that in the above theorem, we classify
a bifurcation as a pitchfork bifurcation if the bifurcating solution
branch is tangential to the space~$\{ \lambda_0 \} \times X$. In 
order to actually get the parabolic shape of the bifurcating branch
that is usually associated with the pitchfork bifurcation, one needs
to verify another non-degeneracy condition. To illustrate this, one
can show that the function~$h$ constructed in the above proof
satisfies not only $h(0) = \lambda_0$ and $h'(0) = 0$, but also
$h''(0) = -\rho/3$, where the constant~$\rho$ is given by
\begin{equation} \label{thm:bifexist3}
  \rho = \frac{\psi_0^* D_{uuu}F(\lambda_0,u_0)[\phi_0,\phi_0,\phi_0]
           + 3 \psi_0^* D_{uu}F(\lambda_0,u_0)[\phi_0,\zeta_0]}
           {\psi_0^* D_{\lambda u}F(\lambda_0,u_0)[\phi_0]
           + \psi_0^* D_{uu}F(\lambda_0,u_0)[\phi_0,\xi_0]} \; ,
\end{equation}
and~$\zeta_0 \in X_a$ is defined by the equation
\begin{displaymath}
  D_{u}F(\lambda_0,u_0)[\zeta_0] +
  (I-P)D_{uu}F(\lambda_0,u_0)[\phi_0,\phi_0] = 0 \; .
\end{displaymath}
If the ratio~$\rho$ is positive, then the solutions on the parabolic
branch exist for $\lambda < \lambda_0$ close to the bifurcation point,
if~$\rho$ is negative then they exist for~$\lambda>\lambda_0$. If, on 
the other hand, one has~$\rho = 0$, either half of the branch could lie
on either side of~$\lambda_0$. These cases are illustrated in
Figure~\ref{fig:symbifsketch}. For more details, we refer the reader
to the discussion in~\cite{lessard:sander:wanner:17a}.
\subsection{Pitchfork bifurcations via extended systems}
\label{subsec:pitbif2}
With Theorem~\ref{thm:bifexist} we have established an explicit sufficient
condition for the existence of a pitchfork bifurcation in the diblock copolymer
model which is induced by the action of the $\Z_{2n}$-symmetry given by the
operator~$T_n u(x) = - u(x+1/n)$. As was pointed out in~\cite{lessard:sander:wanner:17a},
however, this condition is ill-suited if one would like to derive computer-assisted
proofs for the existence of curves of such bifurcation points in a two-parameter
setting. More useful in this situation is a reformulation of the existence
result in terms of a zero finding problem for an extended system --- and
this reformulation can be adapted to our current setting.

To explain this in more detail, consider again the space decompositions
$X = X_a \oplus X_b \oplus X_c$ and $Y = Y_a \oplus Y_b \oplus Y_c$ which
were introduced in Definitions~\ref{def:abcx} and~\ref{def:abcy}. In addition,
consider a fixed element $\ell \in X^*$ in the dual space of~$X$. We then
introduce the following extended system for~$F$, which is modeled after
the one we used in~\cite{lessard:sander:wanner:17a}:
\begin{equation} \label{eqn:cfe}
  \cF_e(\lambda,u,v) = (0,0,0)
  \quad\mbox{ for }\quad
  \cF_e : \left\{ 
    \begin{array}{c}
      \R \times X_a \times X \to \R \times Y_a \times Y \\[0.5ex]
      (\lambda,u,v) \mapsto (\ell(v)-1, F(\lambda,u), D_uF(\lambda,u)[v])
    \end{array}
  \right. .
\end{equation}
The operator~$\cF_e$ is well-defined in view of Lemma~\ref{lem:xapreserved}.
Furthermore, its Fr{\'e}chet derivative is an operator in $\cL(\R \times X_a
\times X, \; \R \times Y_a \times Y)$ which is explicitly given by
\begin{eqnarray} \label{eqn:derivcfe}
  D\cF_e(\lambda, u, v)[\tilde{\lambda},\tilde{u},\tilde{v}] & = &
    \left( \ell(\tilde{v}) \; , \;\;
    \tilde{\lambda} D_\lambda F(\lambda,u) + D_u F(\lambda,u)[\tilde{u}] \; ,
    \right. \nonumber \\[0.5ex]
  & & \;\;\;\left. \tilde{\lambda} D_{\lambda u} F(\lambda,u)[v] +
    D_{uu} F (\lambda,u)[v,\tilde{u}] + D_u F(\lambda,u)[\tilde{v}]
    \right) \; .
\end{eqnarray}
As the next main result of this section shows, the existence of a 
nondegenerate zero of this extended system is equivalent to the
sufficient condition for an $\Z_{2n}$-induced symmetry-breaking
pitchfork bifurcation given in Theorem~\ref{thm:bifexist}.
\begin{theorem}[$\Z_{2n}$-symmetry breaking pitchfork bifurcation via extended systems]
\label{thm:necsuf}
As before, consider the one-dimensional domain $\Omega = (0,1)$, the total mass
$\mu = 0$, and let the function~$f$ be a smooth and odd nonlinearity. Furthermore,
let~$F : X \to Y$ be defined as in~(\ref{def:Flambdau}) and~(\ref{def:xyspaces}).
Then the following two statements hold.
\begin{itemize}
\item[(a)] Suppose that all assumptions of Theorem~\ref{thm:bifexist} are
satisfied, and let~$\ell \in X^*$ be such that $\ell(\phi_0) = 1$. Then the
Fr{\'e}chet derivative~$D\cF_e(\lambda_0,u_0,\phi_0)$ of the mapping
in~(\ref{eqn:cfe}) is invertible, i.e., the solution~$(\lambda_0,u_0,\phi_0)
\in \R \times X_a \times X_\tau$ of the extended system
\begin{equation} \label{thm:necsuf1}
  \cF_e(\lambda,u,\phi) = (0,0,0)
\end{equation}
is an isolated non-degenerate zero.
\item[(b)] Conversely, if there exists an~$\ell \in X^*$ and
a~$\phi_0 \in X_b \cup X_c$ such that~$(\lambda_0,u_0,\phi_0)$
is a zero of the map~$\cF_e$, and if the Fr{\'e}chet
derivative~$D\cF_e(\lambda_0,u_0,\phi_0)$ is invertible, then the
nonlinear operator~$F$ satisfies all assumptions of
Theorem~\ref{thm:bifexist}.
\end{itemize}
In other words, the diblock copolymer equilibrium problem defined
earlier in~(\ref{eqn:dbcp}) undergoes a $\Z_{2n}$-symmetry breaking
pitchfork bifurcation at the point~$(\lambda_0, u_0)$ in the sense of
Theorem~\ref{thm:bifexist}, if and only if~$(\lambda_0, u_0, \phi_0)
\in \R \times X_a \times X_\tau$ is a non-degenerate zero
of~(\ref{thm:necsuf1}) for $\tau \in \{ b,c \}$. Note, however,
that for this we consider~$\cF_e$ as an operator defined
on~$\R \times X_a \times X$, even though~$\phi_0$ has to be contained
in~$X_\tau$.
\end{theorem}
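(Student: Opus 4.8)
The plan is to prove the two directions by a careful analysis of the block structure of the Fréchet derivative $D\cF_e(\lambda_0,u_0,\phi_0)$ given in~(\ref{eqn:derivcfe}), exploiting throughout the $\Z_{2n}$-induced decompositions $X = X_a\oplus X_b\oplus X_c$ and $Y = Y_a\oplus Y_b\oplus Y_c$ together with the mapping properties collected in Lemmas~\ref{lem:subsets} and~\ref{lem:derivcontain}. The key structural observation is that, since $u_0\in X_a$ and $\phi_0\in X_\tau$ with $\tau\in\{b,c\}$, the second component equation of $D\cF_e$ decouples nicely: $D_\lambda F(\lambda_0,u_0)\in Y_a$ by Lemma~\ref{lem:xapreserved}, and $D_uF(\lambda_0,u_0)$ respects each $Y_\sigma$ by~(\ref{lem:subsets1}). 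In the third component, $D_{uu}F(\lambda_0,u_0)[\phi_0,\cdot]$ maps $X_a$ into $Y_\tau$ by Lemma~\ref{lem:derivcontain}(iii) or (v), and $D_{\lambda u}F(\lambda_0,u_0)[\phi_0]$ lies in $Y_\tau$ by~(\ref{lem:subsets1}) as well.

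For part~(a), I would show $D\cF_e(\lambda_0,u_0,\phi_0)$ is injective and surjective by directly solving $D\cF_e(\lambda_0,u_0,\phi_0)[\tilde\lambda,\tilde u,\tilde v] = (c, y, z)$ for arbitrary $(c,y,z)\in\R\times Y_a\times Y$. First decompose $\tilde u = \tilde u_a$ (forced, since $\tilde u\in X_a$) and $\tilde v = \tilde v_a+\tilde v_b+\tilde v_c$ and similarly $z = z_a+z_b+z_c$. The second equation $\tilde\lambda\,D_\lambda F(\lambda_0,u_0) + D_uF(\lambda_0,u_0)[\tilde u_a] = y$ lives entirely in $Y_a$; by Proposition~\ref{prop:phi0}(c), since $\phi_0\notin X_a$, the map $L:X_a\to Y_a$ is bijective, so for \emph{each} choice of $\tilde\lambda$ this determines $\tilde u_a$ uniquely. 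The third equation splits by symmetry: the $Y_\sigma$-components for $\sigma\neq\tau$ read $D_uF(\lambda_0,u_0)[\tilde v_\sigma] = z_\sigma - (\text{known terms in }Y_\sigma)$, which by~(\ref{lem:subsets3}) and the index-zero Fredholm property is solvable in $X_\sigma$ up to the one-dimensional kernel — but since $N(L)=\mathrm{span}(\phi_0)\subset X_\tau$, the operator $L:X_\sigma\to Y_\sigma$ is actually bijective for $\sigma\neq\tau$, pinning down $\tilde v_\sigma$. The remaining $Y_\tau$-component of the third equation, together with the scalar equation $\ell(\tilde v)=c$, becomes the genuinely bifurcation-theoretic part: applying $\psi_0^*$ (which annihilates $R(L)\supset L[X_\tau]$) to the $Y_\tau$-equation eliminates $\tilde v_\tau$ and yields a scalar equation for $\tilde\lambda$ whose coefficient is exactly $\psi_0^*\big(D_{\lambda u}F(\lambda_0,u_0)[\phi_0]+D_{uu}F(\lambda_0,u_0)[\phi_0,\xi_0]\big)$ — here one must verify that $\tilde u_a$, as a function of $\tilde\lambda$, contributes precisely the $\xi_0$-term defined in~(\ref{thm:bifexist1}), using linearity to write $\tilde u_a = \tilde\lambda\,\xi_0 + (\text{term from }y)$. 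The nondegeneracy condition~(\ref{thm:bifexist2}) says this coefficient is nonzero, so $\tilde\lambda$ is determined; then $\tilde v_\tau$ is recovered from the $Y_\tau$-equation modulo $\mathrm{span}(\phi_0)$, and finally $\ell(\phi_0)=1$ fixes the $\phi_0$-component of $\tilde v_\tau$. This establishes bijectivity.

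For part~(b), the argument essentially reverses the above. From $\cF_e(\lambda_0,u_0,\phi_0)=(0,0,0)$ one reads off $F(\lambda_0,u_0)=0$, $\ell(\phi_0)=1$ (so $\phi_0\neq 0$), and $L\phi_0 = 0$; since $\phi_0\in X_b\cup X_c$ is nonzero, Hypothesis~\ref{hyp:1dk}'s conclusion $N(L)=\mathrm{span}(\phi_0)$ will follow once one-dimensionality is known. To get one-dimensionality and the Fredholm property (Hypothesis~\ref{hyp:fred}) I would argue by contradiction: Hypothesis~\ref{hyp:fred} for $F$ at $(\lambda_0,u_0)$ is the ambient assumption~\cite[Proposition~2.15]{lessard:sander:wanner:17a} style Fredholm-index-zero statement already available for this operator, so $L$ is Fredholm of index zero; if $\dim N(L)\geq 2$, pick a second kernel element independent of $\phi_0$, decompose it into symmetry components, and use~(\ref{lem:subsets1}) to find a nonzero kernel element in some $X_\sigma$ — one then shows (as in the proof of Proposition~\ref{prop:phi0}) that invertibility of $D\cF_e$ is violated, because the extended derivative would have a nontrivial kernel built from this extra null direction. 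Once Hypotheses~\ref{hyp:fred} and~\ref{hyp:1dk} hold, invertibility of $D\cF_e$ forces, by the computation in part~(a) run in reverse, both that $L:X_a\to Y_a$ is bijective (so $\xi_0$ exists and is unique, giving~(\ref{thm:bifexist1})) and that the scalar coefficient is nonzero, which is exactly~(\ref{thm:bifexist2}). Hence all hypotheses of Theorem~\ref{thm:bifexist} are recovered.

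The main obstacle I anticipate is the bookkeeping in part~(a) that correctly identifies the $\xi_0$-contribution: one must be careful that the component $\tilde u_a$ solving the second equation, when substituted back into the third equation and hit with $\psi_0^*$, reproduces term-for-term the expression in~(\ref{thm:bifexist2}), including getting the orthogonality of $P$ to do its job (so that $(I-P)D_\lambda F$ in~(\ref{thm:bifexist1}) matches the unprojected $D_\lambda F$ appearing in~(\ref{eqn:derivcfe}) once $\psi_0^*$ is applied). A secondary subtlety is confirming that $L$ is bijective on each $X_\sigma$ with $\sigma\neq\tau$ — this uses that the restricted operators are Fredholm of index zero (via~\cite[Proposition~2.15]{lessard:sander:wanner:17a} on subintervals, as in the proof of Proposition~\ref{prop:phi0}(c)) together with $N(L)\cap X_\sigma=\{0\}$. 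Everything else is routine linear algebra on the three-block decomposition, and the proof of part~(b) is then largely a transcription of part~(a)'s chain of equivalences read backwards.
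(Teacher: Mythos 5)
Your overall plan coincides with the paper's proof in its essential mechanics (block analysis of $D\cF_e(\lambda_0,u_0,\phi_0)$ via~(\ref{eqn:derivcfe}), identification of the $\xi_0$-contribution from the second component using $D_\lambda F(\lambda_0,u_0)\in Y_a\subset R(L)=N(P)$, extraction of the scalar nondegeneracy condition with $\psi_0^*$, normalization via $\ell(\phi_0)=1$, and part~(b) as a reversal with Fredholmness and $\dim N(L)=1$ deferred to~\cite{lessard:sander:wanner:17a}). The genuine problem is in your surjectivity argument for~(a): by splitting the \emph{third} equation into its $Y_a$, $Y_b$, $Y_c$ components you need $L[X_\sigma]=Y_\sigma$ not only for $\sigma=a$ but also for the remaining index $\sigma\in\{b,c\}\setminus\{\tau\}$, i.e.\ that the cokernel functional $\psi_0^*$ annihilates all of $Y_\sigma$ for $\sigma\neq\tau$. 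For $\sigma=a$ this is Proposition~\ref{prop:phi0}(c), but your proposed justification for the other space --- the restriction-to-$(0,1/n)$ argument ``as in the proof of Proposition~\ref{prop:phi0}(c)'' --- does not apply there: that argument hinges on elements of $X_a$ being determined by their values on $(0,1/n)$ and satisfying Neumann conditions at $x=1/n$, neither of which holds for functions in $X_b$ or $X_c$. The claim you need is in fact true, but requires a different proof, e.g.\ via the cosine characterization of Remark~\ref{rem:fourierabc}: $-\Delta^2:X_\sigma\to Y_\sigma$ is an isomorphism, the lower-order terms of $L$ map $X_\sigma$ into $Y_\sigma$ (Lemma~\ref{lem:subsets}) and are compact, so $L|_{X_\sigma}$ is Fredholm of index zero, and injectivity (since $N(L)=\mathrm{span}(\phi_0)\subset X_\tau$) gives bijectivity. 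Alternatively you can avoid needing this fact entirely, which is what the paper does: choose $\tilde\lambda$ so that $\psi_0^*$ applied to the full third-component right-hand side vanishes, conclude that it lies in $R(L)$, and solve for $\tilde v\in X$ globally; then only $L[X_a]=Y_a$ and $R(L)=N(\psi_0^*)$ are used.

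Two smaller points in part~(b). First, the assertion that invertibility of $D\cF_e$ ``forces $L:X_a\to Y_a$ bijective by running (a) in reverse'' is not immediate: surjectivity of $D\cF_e$ only yields $Y_a\subset L[X_a]+\mathrm{span}\,D_\lambda F(\lambda_0,u_0)$ from the second component. The correct route, for which you already have all ingredients, is to first establish Hypotheses~\ref{hyp:fred} and~\ref{hyp:1dk} and then invoke Proposition~\ref{prop:phi0}(c) directly; existence and uniqueness of $\xi_0$ in~(\ref{thm:bifexist1}) then follow, and your ``reverse'' argument does correctly deliver~(\ref{thm:bifexist2}) (if the scalar coefficient vanished, one builds a nontrivial kernel element of $D\cF_e$, or equivalently takes a preimage of $(0,0,z)$ with $z\notin R(L)$ as the paper does). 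Second, for $\dim N(L)=1$ your sketch works but is more roundabout than needed: if $\psi\in N(L)$ is independent of $\phi_0$, then $(0,0,\psi-\ell(\psi)\phi_0)$ is already a nonzero element of the kernel of $D\cF_e$, with no symmetry decomposition required.
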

\begin{proof}
We begin by establishing the validity of {\em (a)\/}. It is clear that
the assumptions of Theorem~\ref{thm:bifexist}, in combination with
$\ell(\phi_0) = 1$, imply that~$(\lambda_0,u_0,\phi_0) \in \R \times 
X_a \times X_\tau$ is a solution of~(\ref{thm:necsuf1}), where $\tau
\in \{ b,c \}$.

In order to verify that the Fr\'echet derivative~$D\cF_e(\lambda_0,u_0,
\phi_0)$ is one-to-one, suppose there exists $(\tilde{\lambda},\tilde{u},
\tilde{v}) \in \R \times X_a \times X$ such that
\begin{equation} \label{thm:necsuf2}
  D\cF_e(\lambda_0,u_0,\phi_0)[\tilde{\lambda},\tilde{u},\tilde{v}]
  = (0,0,0) \; .
\end{equation}
We show that this implies $(\tilde{\lambda},\tilde{u},\tilde{v}) = (0,0,0)$.
Assume first that the inequality $\tilde{\lambda} \ne 0$ holds. Then in view
of $\phi_0 \notin X_a$ and Proposition~\ref{prop:phi0}{\em (c)\/} we know
that $L(X_a) = Y_a$. Lemma~\ref{lem:xapreserved} and the definition of~$P$
imply $D_\lambda F(\lambda_0,u_0) \in Y_a = L(X_a) \subset R(L) \subset N(P)$,
which in turn gives the identity $D_\lambda F(\lambda_0,u_0) = (I-P) D_\lambda
F(\lambda_0,u_0)$. Now the second component of~(\ref{thm:necsuf2}), which
can be made explicit via~(\ref{eqn:derivcfe}), can be rewritten in the form
\begin{displaymath}
  (I-P)D_\lambda F(\lambda_0,u_0) +
    L[ \tilde{u} / \tilde{\lambda}] = 0 \; ,
\end{displaymath}
and since~$\xi_0 \in X_a$ is the unique solution of~(\ref{thm:bifexist1}),
this immediately furnishes $\xi_0 = \tilde{u} / \tilde{\lambda}$. The
third component of~(\ref{thm:necsuf2}) and~(\ref{eqn:derivcfe}) then
gives
\begin{displaymath}
  D_{\lambda u} F(\lambda_0,u_0)[\phi_0] +
    D_{uu} F (\lambda_0,u_0)[\phi_0,\xi_0] =
  -D_u F(\lambda_0,u_0)[\tilde{v} / \tilde{\lambda}] =
  -L[\tilde{v} / \tilde{\lambda}] \in R(L) \; ,
\end{displaymath}
which contradicts our assumption~(\ref{thm:bifexist2}), and we therefore
obtain $\tilde{\lambda} = 0$. But then the second component of
both~(\ref{thm:necsuf2}) and~(\ref{eqn:derivcfe}) implies $L[\tilde{u}] = 0$,
and thus also $\tilde{u} \in N(L) \cap X_a = \{ 0 \}$, i.e., we necessarily
have $\tilde{u} = 0$. Substituting both $\tilde{\lambda} = 0$ and $\tilde{u} = 0$
into the third component finally gives $L \tilde{v} = 0$, as well as $\tilde{v} =
\alpha \phi_0$. Together with the first component of~(\ref{thm:necsuf2})
this further yields $0 = \ell(\alpha \phi_0) = \alpha \ell(\phi_0) = \alpha$,
i.e., one has the identity $\tilde{v}  = 0$. This completes the proof
that~$D\cF_e(\lambda_0,u_0,\phi_0)$ is one-to-one.

We now show that~$D\cF_e(\lambda_0,u_0,\phi_0)$ is onto. For this, let
$(\tau,y,z) \in \R \times Y_a \times Y$ be arbitrary, but fixed. We need
to explicitly construct an inverse image under the Fr\'echet derivative.
To this end, notice first that since $y \in Y_a = L(X_a)$ and
$N(L) \cap X_a = \{ 0 \}$, there exists a unique element $\tilde{u} \in X_a$
such that $L[\tilde{u}] = y$. In view of Hypothesis~\ref{hyp:1dk}, the linear
functional $\psi_0^* \in Y^*$ satisfies $R(L) = N(\psi_0^*)$. Now define
\begin{displaymath}
  \tilde{\lambda} =
  \frac{\psi_0^* \left( z - D_{uu}F(\lambda_0,u_0)[\phi_0,\tilde{u}]\right)}
    {\psi_0^* \left( D_{\lambda u}F(\lambda_0,u_0)[ \phi_0 ] +
     D_{uu}F(\lambda_0,u_0)[\phi_0,\xi_0] \right)} \; ,
\end{displaymath}
where the denominator of this ratio is nonzero due to~(\ref{thm:bifexist2}).
A simple algebraic reformulation of this definition then leads to
\begin{displaymath}
  \psi_0^* \left( \tilde{\lambda} D_{\lambda u}F(\lambda_0,u_0)[ \phi_0 ] +
    D_{uu}F(\lambda_0,u_0)[\phi_0,\tilde{u} + \tilde{\lambda} \xi_0] -
    z \right) = 0 \; .
\end{displaymath}
The choice of~$\psi_0^*$ shows that the argument in the above
equation has to be contained in~$R(L)$, and since $L[\beta \phi_0] = 0$
for any scalar~$\beta$, there exists a $\tilde{v} \in X$ such that for
every $\beta \in \R$ the equation
\begin{equation} \label{eqn:comp2}
  \tilde{\lambda} D_{\lambda u}F(\lambda_0,u_0)[ \phi_0 ] +
    D_{uu}F(\lambda_0,u_0)[\phi_0,\tilde{u} + \tilde{\lambda}\xi_0] +
    L[\tilde{v} + \beta \phi_0] = z
\end{equation}
is satisfied. Now~(\ref{thm:bifexist1}), in combination with
$N(P) = R(L)$ and $D_\lambda F(\lambda_0,u_0) \in Y_a \subset R(L)$
give
\begin{equation} \label{eqn:comp3}
  \tilde{\lambda} D_\lambda F (\lambda_0,u_0) +
    L[\tilde{u} + \tilde{\lambda}\xi_0] =
  \tilde{\lambda} \left( (I-P)  D_\lambda F (\lambda_0,u_0) +
    L [\xi_0] \right) + L[\tilde{u}] =
  L[\tilde{u}] = y \; . 
\end{equation}
Finally, notice that $\xi_0 \in X_a$ yields $\tilde{u} + \tilde{\lambda}
\xi_0 \in X_a$, and this in turn implies that for all $\beta \in \R$
the identities in~(\ref{eqn:comp2}) and~(\ref{eqn:comp3}) establish
the second and third components of the desired equation
\begin{displaymath}
  D\cF_e(\lambda_0,u_0,\phi_0)[\tilde{\lambda} , \;
    \tilde{u} + \tilde{\lambda} \xi_0, \;
    \tilde{v} + \beta \phi_0] = (\tau,y,z) \; .
\end{displaymath}
It remains to choose~$\beta$ in such a way that the first component of
the equation holds as well. Since $\ell(\phi_0) = 1$ by our earlier
normalization, we need to solve $\tau = \ell(\tilde{v} + \beta \phi_0)
= \ell(\tilde{v}) + \beta$, which is clearly satisfied in we let
$\beta = \tau - \ell(\tilde{v})$. This shows that~$D\cF_e(\lambda_0,
u_0, \phi_0)$ is onto, and completes the proof of~{\em (a)\/}.

We now turn our attention to the verification of~{\em (b)\/}, i.e.,
we assume that there exists an~$\ell \in X^*$ and a~$\phi_0 \in X_b
\cup X_c$ such that~$(\lambda_0,u_0,\phi_0)$ is a zero of the
map~$\cF_e$, and that the Fr{\'e}chet derivative~$D\cF_e(\lambda_0,
u_0, \phi_0) \in \cL(\R \times X_a \times X, \; \R \times Y_a \times Y)$
is invertible. We need to show that all the assumptions of
Theorem~\ref{thm:bifexist} are satisfied.

We begin by establishing Hypotheses~\ref{hyp:fred} and~\ref{hyp:1dk}.
In view of $\cF_e(\lambda_0,u_0,\phi_0) = 0$ and~(\ref{eqn:cfe}),
one immediately obtains $F(\lambda_0,u_0) = 0$, as well as
$L[\phi_0] = D_uF(\lambda_0,u_0)[\phi_0] = 0$. Furthermore, due
to~$\ell(\phi_0) = 1$ we have $\dim N(L) \ge 1$. On the other hand,
since the Fr{\'e}chet derivative~$D\cF_e(\lambda_0, u_0, \phi_0)$
is invertible, it can be shown as in~\cite[Proof of
Theorem~2.12]{lessard:sander:wanner:17a} that we have in fact
$\dim N(L) = 1$. Since in this paper it was also shown that~$L$
is a Fredholm operator with index zero, this establishes both
Hypotheses~\ref{hyp:fred} and~\ref{hyp:1dk}. In particular,
it follows that~$P$ and~$Q$ as defined in Definition~\ref{def:orthproj}
have rank one. Notice also that all the assumptions of
Proposition~\ref{prop:phi0} have been established, and~{\em (c)\/}
of this result, in combination with our assumption $\phi_0 \in X_b
\cup X_c$, then immediately implies $L(X_a) = Y_a$.

In order to establish the remaining assumptions of Theorem~\ref{thm:bifexist},
we first show that the equation in~(\ref{thm:bifexist1}) has a unique 
solution~$\xi_0 \in X_a$. We know that $u_0 \in X_a$, and this yields
the inclusion $D_\lambda F(\lambda_0,u_0) \in Y_a \subset R(L)$.
Definition~\ref{def:orthproj} implies $N(P) = R(L)$, and therefore
\begin{displaymath}
  (I-P) D_\lambda F(\lambda_0,u_0) =
  D_\lambda F(\lambda_0,u_0) \in Y_a
\end{displaymath}
holds. This implies the existence of a $\xi_0 \in X_a$ with
$L \xi_0 = -(I-P) D_\lambda F(\lambda_0,u_0)$ in view of $L(X_a) = Y_a$,
i.e., equation~(\ref{thm:bifexist1}) holds. Moreover, this
solution~$\xi_0$ is uniquely determined, because if $\hat{\xi}_0 \in X_a$
were another solution, then $L\hat{\xi}_0 = -(I-P) D_\lambda F(\lambda_0,u_0)
= L\xi_0$, i.e., one has $L(\hat{\xi}_0 - \xi_0) = 0$. But this equality
yields the inclusion $\hat{\xi}_0 - \xi_0 \in N(L) \cap X_a = \{ 0 \}$. 

To conclude our proof, we only have to establish~(\ref{thm:bifexist2}).
For this, let~$z$ be any element of the complement $Y \setminus R(L)$.
Due to the assumptions of~{\em (b)\/} there exists a triple~$(\tilde{\lambda},
\tilde{u},\tilde{v}) \in \R \times X_a \times X$ such that the identity
$D\cF_e (\lambda_0, u_0, \phi_0)[\tilde{\lambda},\tilde{u},\tilde{v}] =
(0,0,z)$ is satisfied. Let~$\xi_0$ again denote the unique solution
to~(\ref{thm:bifexist1}) from above. We assume first that
$\tilde{\lambda} = 0$. Then the explicit form of the second component
given in~(\ref{eqn:derivcfe}) yields $0 = \tilde{\lambda} D_\lambda
F(\lambda_0,u_0) + L \tilde{u} = L \tilde{u}$, which in turn gives
the inclusion $\tilde{u} \in N(L) \cap X_a = \{ 0 \}$, and thus $\tilde{u} = 0$. 
Therefore, another application of~(\ref{eqn:derivcfe}) furnishes
\begin{displaymath}
  z = \tilde{\lambda} D_{\lambda u} F(\lambda_0,u_0)[\phi_0] +
    D_{uu} F(\lambda_0,u_0)[\phi_0,\tilde{u}] + L \tilde{v} =
  L \tilde{v} \; ,
\end{displaymath}
which contradicts $z \notin R(L)$. Thus, our assumption
concerning~$\tilde{\lambda}$ was wrong, and we have to have the
inequality $\tilde{\lambda} \ne 0$. But then~(\ref{eqn:derivcfe})
gives rise to $0 = \tilde{\lambda} D_\lambda F(\lambda_0,u_0) +
L \tilde{u}$, and after division by~$\tilde{\lambda}$ one obtains
\begin{equation} \label{eqn:dlaml}
  D_\lambda F(\lambda_0,u_0) + L[ \tilde{u} / \tilde{\lambda}] = 0 \; . 
\end{equation}
Notice that we already established earlier the inclusion $Y_a =
L(X_a) \subset R(L) = N(P)$, as well as $D_\lambda F(\lambda_0,u_0)
\in Y_a \subset N(P)$, and this immediately gives $P D_\lambda
F(\lambda_0,u_0) = 0$. In combination with~(\ref{eqn:dlaml}) one
then obtains
\begin{displaymath}
  (I-P) D_\lambda F(\lambda_0,u_0) +
    L[ \tilde{u} / \tilde{\lambda}] = 0 \; ,
\end{displaymath}
and since~$\xi_0$ is the unique solution to this latter equation,
we have to have $\xi_0 = \tilde{u} / \tilde{\lambda}$. Substituting
this into the third component of $D\cF_e (\lambda_0, u_0, \phi_0)
[\tilde{\lambda},\tilde{u},\tilde{v}] = (0,0,z)$, one finally obtains
after a few algebraic reformulations
\begin{displaymath}
  D_{\lambda u} F(\lambda_0,u_0)[\phi_0] +
    D_{uu} F(\lambda_0,u_0)[\phi_0,\xi_0] =
  z / \tilde{\lambda} - L \tilde{v} \; .
\end{displaymath}
Clearly we have $L \tilde{v} \in R(L)$. Thus, if the right-hand side of
this equation were contained in~$R(L)$, then one would have to have
$z \in R(L)$, which contradicts our original assumption. Therefore, the
right-hand side cannot be an element of~$R(L)$, and this
establishes~(\ref{thm:bifexist2}). This completes the proof of
the theorem.
\end{proof}

\medskip
The above result is remarkable in the sense that even though we
are considering a completely different symmetry from the simple ones
discussed in~\cite{lessard:sander:wanner:17a}, we still obtain 
essentially the same sufficient existence condition for a symmetry-breaking
pitchfork bifurcation via the extended system~(\ref{eqn:cfe}). All that
changes is the restriction of the second argument~$u$ to the new
symmetry space. In fact, a closer inspection of our results shows that
the same approach should work in other situations as well, as long as
a few basic assumptions are satisfied. These are collected in the 
following remark.
\begin{remark}[Required assumptions for the extended system approach]
\label{remark:metathm}
One can easily verify that our main Theorems~\ref{thm:bifexist}
and~\ref{thm:necsuf} remain valid, as long as the following eight
assumptions are satisfied:
\begin{itemize}
\item The underlying Hilbert spaces allow for decompositions of the
form $X = X_a \oplus X_b \oplus X_c$ and $Y = Y_a \oplus Y_b \oplus Y_c$,
where the involved spaces are pairwise orthogonal, see
Lemma~\ref{lem:pairwiseorthogonal}.
\item For every $u_0 \in X_a$ one obtains $D_\lambda F(\lambda_0, u_0) \in Y_a$,
as shown in Lemma~\ref{lem:xapreserved}.
\item For $u_0 \in X_a$ and $\tau = a,b,c$ the inclusions
$L X_\tau \subset Y_\tau$ are satisfied, see Lemma~\ref{lem:subsets}.
\item For $u_0 \in X_a$ and $\tau = b,c$ one has $D_{\lambda u}
F(\lambda_0,u_0) [X_\tau] \subset Y_\tau$, see Lemma~\ref{lem:subsets}.
\item For $u_0 \in X_a$ and $\tau = b,c$ we have $D_{uu}F(\lambda_0,u_0)
[X_\tau,X_a] \subset Y_\tau$, as in Lemma~\ref{lem:derivcontain}.
\item The orthogonal projector from Definition~\ref{def:orthproj} satisfies
$P(Y_a) \subset Y_a$, see Lemma~\ref{lem:subsets}.
\item The underlying nonlinear operator~$F$ is odd, i.e., we have
$F(\lambda,-u) = -F(\lambda,u)$.
\item If the kernel function satisfies $\phi_0 \notin X_a$, then $L(X_a) = Y_a$,
see Lemma~\ref{prop:phi0}.
\end{itemize} 
\end{remark}
%
%
%
%
\section{Validation of Symmetry-Induced Pitchfork Bifurcations}
\label{sec:compval}
In this section we combine the theory developed in the earlier parts of the
paper with a constructive version of the implicit function theorem to establish
the existence of branches of pitchfork bifurcation points which are induced 
through the cyclic group action defined in~(\ref{eqn:z2nsymm}). For this,
we recall a specific branch-validation version of the the constructive implicit
function theorem from~\cite{sander:wanner:16a} in Section~\ref{sec:compval1}, and
also describe in detail the system that has to be studied in this context. After
that, Section~\ref{sec:compval2} demonstrates how the assumptions of the branch
validation result can be verified. After briefly outlining our spectral approach
to this, we show how our recent paper~\cite{rizzi:etal:22a} can be used to 
determine the necessary norm bound of the inverse of the Fr\'echet derivative,
and we also derive required Lipschitz estimates. Finally, Section~\ref{sec:compval3}
presents some sample pitchfork curve continuations.
\subsection{Establishing branches of pitchfork bifurcation points}
\label{sec:compval1}
In view of Theorem~\ref{thm:necsuf} we can establish the existence of a
specific pitchfork bifurcation point by proving that the associated extended
system~(\ref{eqn:cfe}) has an isolated zero. In our situation, this extended
system involves three unknowns --- the equilibrium solution~$u$, the
kernel function~$v$, and the parameter value~$\lambda$. Note, however,
that the diblock copolymer model has an additional parameter~$\sigma$,
and it was shown in~\cite{johnson:etal:13a, lessard:sander:wanner:17a}
that these bifurcation points combine to form curves parameterized
by~$\sigma$. In the present section, we will explain how a constructive
version of the implicit function theorem can be used to rigorously
verify these branches in the setting of cyclic symmetries. 

For the purposes of this paper, we are interested in finding stationary
solutions of the diblock copolymer model which are in fact pitchfork
bifurcation points. As equilibrium solutions, they have to be zeros of
the nonlinear operator
\begin{equation} \label{eqn:compval1}
  F(\sigma,\lambda,u) =
  -\Delta ( \Delta u + \lambda f(u + \mu) ) - \lambda \sigma u
  \; ,
\end{equation}
which is considered as an operator $F : \R \times \R \times X \to Y$ for
the spaces defined in~(\ref{def:xyspaces}), and where in contrast to our 
earlier usage we also explicitly indicate its dependence on~$\sigma$. Due
to Theorem~\ref{thm:necsuf}, such a zero is a pitchfork bifurcation point
at the parameter values~$\sigma$ if it is an isolated zero of the extended
operator
\begin{equation} \label{eqn:compval2}
  \cF(\sigma,\cdot,\cdot,\cdot) : \left\{ 
    \begin{array}{c}
      \R \times X_a \times X \to \R \times Y_a \times Y \\[0.5ex]
      (\lambda,u,v) \mapsto (\ell(v)-1, F(\sigma,\lambda,u),
       D_uF(\sigma,\lambda,u)[v])
    \end{array}
  \right. ,
\end{equation}
where the spaces~$X_a$ and~$Y_a$ were defined in Definitions~\ref{def:abcx}
and~\ref{def:abcy}, respectively. Notice that we include the explicit
dependence on the parameter~$\sigma$, which is the natural continuation
parameter for curves of bifurcation points. In order to simplify the 
notation going forward, we make use of the abbreviations
\begin{equation} \label{eqn:compval3}
  w = (\lambda,u,v) \in \cX = \R \times X_a \times X
  \qquad\mbox{ and }\qquad
  \cY = \R \times Y_a \times Y \; .
\end{equation}
In addition, for applying the generalization of the constructive implicit
function theorem from~\cite{sander:wanner:16a} which is taylor-made for branch
validation, one needs to verify a number of assumptions through rigorous
computer-assisted means. More precisely, one has to accomplish the following:
\begin{itemize}
\item[(H1)] Find an  approximate zero~$w^* = (\lambda^*,u^*,v^*) \in \cX$ of
the operator~$\cF(\sigma^*,\cdot) : \cX \to \cY$ such that for a real
number~$\rho > 0$ one has the residual estimate $\|\cF(\sigma^*,w^*)\|_{\cY}
\le \rho$. This can be done by simply using interval arithmetic, based on a
truncated cosine Fourier series representation of the functions~$u^* \in X_a$
and~$v^* \in X$, see also Remark~\ref{rem:fourierabc} and the discussion
in the next section.
\item[(H2)] Find a bound $K \ge 0$ such that $\| D_w \cF(\sigma^*,w^*)^{-1}
\|_{\cL(\cY,\cX)} \le K$. This estimate is by far the technically most involved
one, but due to the specific form of~$\cF$, we can directly quote a result
from~\cite{rizzi:etal:22a}. This will also be presented in the next section,
and the reader can find all the technical details in the cited paper.
\item[(H3)] Find Lipschitz bounds for the partial Fr\'echet derivatives~$D_w\cF$
and~$D_{\sigma}\cF$ of the extended operator~$\cF$ for all~$(\sigma,w)$
close to~$(\sigma^*,w^*)$ in the following sense. There exist four Lipschitz
constants~$M_k \ge 0$ for $k = 1,\ldots,4$, as well as~$d_w > 0$
and~$d_\sigma > 0$, such that for all pairs~$(\sigma,w) \in \R \times \cX$
with $\| w - w^* \|_\cX \le d_w$ and $|\sigma - \sigma^*| \le d_\sigma$
one has
\begin{eqnarray*}
  \left\| D_w\cF(\sigma,w) -
    D_w\cF(\sigma^*,w^*) \right\|_{\cL(\cX,\cY)} & \le &
    M_1 \left\| w - w^* \right\|_\cX +
    M_2 \left|\sigma - \sigma^* \right| \; , \\[1.5ex]
  \left\| D_\sigma\cF(\sigma,u) -
    D_\sigma\cF(\sigma^*,w^*) \right\|_\cY & \le &
    M_3 \left\| w - w^* \right\|_\cX +
    M_4 \left|\sigma - \sigma^* \right| \; ,
\end{eqnarray*}
where~$\| \cdot \|_{\cL(\cX,\cY)}$ denotes the operator norm
in~$\cL(\cX,\cY)$, and as usual we identify~$\cY$ with~$\cL(\R,\cY)$.
These estimates are substantially more straightforward than the previous
step, and they will be established in the next section.
\end{itemize}
Under these assumptions, one can then establish the following 
theorem which guarantees branch segments of zeros of~$\cF$ parameterized
by~$\sigma$ close to the approximate solution~$(\sigma^*,w^*)$. This
result is taken from~\cite[Theorem~5]{sander:wanner:16a}, and it is a
simple consequence of the constructive implicit function theorem
in~\cite[Theorem~1]{sander:wanner:16a}. In fact, the theorem below
reduces to the original constructive implicit function theorem in
the case~$w^\circledast = 0$.
\begin{theorem}[Regular branch segment validation]
\label{branch:thm}
Let~$\cX$ and~$\cY$ be Banach spaces, and suppose that the nonlinear
parameter-dependent operator $\cF : \R \times \cX \to \cY$ is both
Fr\'echet differentiable and satisfies~(H3). Assume that~$(\sigma^*,w^*)
\in \R \times \cX$ satisfies the estimates~(H1) and~(H2) for some positive
constants~$\rho$ and~$K$, and let~$w^\circledast \in \cX$
be given with
\begin{equation} \label{branch:thm1}
  \left\| D_\sigma\cF(\sigma^*,w^*) +
    D_w\cF(\sigma^*,w^*)[w^\circledast] \right\|_\cY \le \eta
\end{equation}
for some constant~$\eta \ge 0$, which will indicate the slant of
the box containing the solution branch. Finally, assume that we 
have the estimates
\begin{equation} \label{branch:thm2}
  4 K^2 \rho M_1 < 1
  \qquad\mbox{ and }\qquad
  2 K \rho < d_w \; .
\end{equation}
Then there exist pairs of constants~$(\delta_\sigma,\delta_w)$
which satisfy
\begin{equation} \label{branch:thm3}
  0 < \delta_\sigma \le d_\sigma \; , \qquad
  0 < \delta_w \le d_w \; , 
  \qquad\mbox{ and }\qquad
  \delta_\sigma \left\| w^\circledast \right\|_\cX + \delta_w \le d_w \; ,
\end{equation}
as well as the two inequalities
\begin{equation} \label{branch:thm4}
  2 K M_1 \delta_w + 2 K
    \left( M_1 \left\| w^\circledast \right\|_\cX + M_2 \right)
    \delta_\sigma \le 1
\end{equation}
and
\begin{equation} \label{branch:thm5}
  2 K \rho + 2 K \eta \delta_\sigma + 2 K
    \left( M_1 \left\| w^\circledast \right\|_\cX^2
           + (M_2+M_3) \left\| w^\circledast \right\|_\cX 
           + M_4 \right)
  \delta_\sigma^2
    \le \delta_w  \; ,
\end{equation}
and for each such pair the following holds. For every parameter~$\sigma
\in \R$ with $|\sigma - \sigma^*| \le \delta_\sigma$ there exists a
unique~$w(\sigma) \in \cX$ with $\| w(\sigma) - ( w^* + (\sigma -
\sigma^*) w^\circledast)\|_\cX \le \delta_w$, and for which the
nonlinear equation $\cF(\sigma, w(\sigma)) = 0$ holds. In other words,
all solutions of the nonlinear problem $\cF(\sigma,w)=0$ in the
slanted set
\begin{displaymath}
  \left\{ (\sigma,w) \in \R \times \cX \; : \;
  \left| \sigma - \sigma^* \right| \le \delta_\sigma
  \quad\mbox{and}\quad
  \left\| w - \left( w^* +
    \left(\sigma - \sigma^* \right) w^\circledast \right) \right\|_\cX
  \le \delta_w \right\}
\end{displaymath}
lie on the branch $\sigma \mapsto w(\sigma)$. In addition,
if the mapping~$\cF : \R \times \cX \to \cY$ is $k$-times
continuously Fr\'echet differentiable, then so is the solution
function $\sigma \mapsto w(\sigma)$.
\end{theorem}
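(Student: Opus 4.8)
The plan is to reduce Theorem~\ref{branch:thm} to a parametrized Banach fixed point argument, in the spirit of the constructive implicit function theorem of~\cite{sander:wanner:16a}. Set $A := D_w\cF(\sigma^*,w^*) \in \cL(\cX,\cY)$, which by~(H2) has $\|A^{-1}\|_{\cL(\cY,\cX)} \le K$, and for each $\sigma$ with $|\sigma-\sigma^*| \le \delta_\sigma$ consider the Newton-type map $T_\sigma(w) := w - A^{-1}\cF(\sigma,w)$ on $\cX$. Since $A$ is injective, $w$ is a fixed point of $T_\sigma$ precisely when $\cF(\sigma,w)=0$, so it suffices to show that $T_\sigma$ has a unique fixed point in the closed ball $\overline{B}_\sigma := \{ w \in \cX : \|w - \bar w(\sigma)\|_\cX \le \delta_w\}$ centered at the slanted point $\bar w(\sigma) := w^* + (\sigma-\sigma^*)w^\circledast$.

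First I would settle the existence of admissible pairs $(\delta_\sigma,\delta_w)$. The hypotheses $4K^2\rho M_1 < 1$ and $2K\rho < d_w$ together say exactly that $2K\rho < \min\{ d_w,\, 1/(2KM_1)\}$, so one may fix any $\delta_w$ with $2K\rho < \delta_w < \min\{ d_w,\, 1/(2KM_1)\}$; this yields $2KM_1\delta_w < 1$, $\delta_w < d_w$, and a strictly positive slack $\delta_w - 2K\rho$. Every remaining constraint in~\eqref{branch:thm3}, \eqref{branch:thm4} and~\eqref{branch:thm5} that still involves $\delta_\sigma$ --- namely $\delta_\sigma \le d_\sigma$, $\delta_\sigma\|w^\circledast\|_\cX \le d_w - \delta_w$, the term $2K(M_1\|w^\circledast\|_\cX + M_2)\delta_\sigma$, and $2K\eta\delta_\sigma + 2K(\cdots)\delta_\sigma^2 \le \delta_w - 2K\rho$ --- then holds once $\delta_\sigma > 0$ is taken small enough. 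This produces a nonempty family of admissible pairs, and I fix one such pair from now on.

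For that pair, three estimates on $\overline{B}_\sigma$ finish the proof. \emph{Residual at the center:} writing $\cF(\sigma,\bar w(\sigma)) - \cF(\sigma^*,w^*) = \int_0^1 \tfrac{d}{dt}\,\cF\!\left(\sigma^* + t(\sigma-\sigma^*),\, w^* + t(\sigma-\sigma^*)w^\circledast\right)dt$, the integrand equals $(\sigma-\sigma^*)\big(D_\sigma\cF + D_w\cF[w^\circledast]\big)$ evaluated along the segment; splitting it into its value at $(\sigma^*,w^*)$, bounded by $\eta$ via~\eqref{branch:thm1}, plus increments controlled by~(H3), and invoking~(H1), gives $\|\cF(\sigma,\bar w(\sigma))\|_\cY \le \rho + \eta\delta_\sigma + \big(M_1\|w^\circledast\|_\cX^2 + (M_2+M_3)\|w^\circledast\|_\cX + M_4\big)\delta_\sigma^2$, hence $2K\|\cF(\sigma,\bar w(\sigma))\|_\cY \le \delta_w$ by~\eqref{branch:thm5}. \emph{Contraction:} for $w_1,w_2 \in \overline{B}_\sigma$ one has $T_\sigma(w_1)-T_\sigma(w_2) = -A^{-1}\int_0^1\big(D_w\cF(\sigma,\, w_2 + t(w_1-w_2)) - A\big)[w_1-w_2]\,dt$; since each $w \in \overline{B}_\sigma$ obeys $\|w-w^*\|_\cX \le \delta_w + \|w^\circledast\|_\cX\delta_\sigma$, the bound $\|D_w\cF(\sigma,w) - A\|_{\cL(\cX,\cY)} \le M_1\delta_w + (M_1\|w^\circledast\|_\cX + M_2)\delta_\sigma$ from~(H3) together with~(H2) and~\eqref{branch:thm4} gives $\|T_\sigma(w_1)-T_\sigma(w_2)\|_\cX \le \tfrac12\|w_1-w_2\|_\cX$. \emph{Self-map:} combining the two, $\|T_\sigma(w)-\bar w(\sigma)\|_\cX \le \|T_\sigma(w)-T_\sigma(\bar w(\sigma))\|_\cX + \|A^{-1}\cF(\sigma,\bar w(\sigma))\|_\cX \le \tfrac12\delta_w + \tfrac12\delta_w = \delta_w$, so $T_\sigma$ maps $\overline{B}_\sigma$ into itself.

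By the Banach fixed point theorem, each $\sigma$ with $|\sigma-\sigma^*|\le\delta_\sigma$ yields a unique $w(\sigma)\in\overline{B}_\sigma$ with $\cF(\sigma,w(\sigma))=0$; as the slanted set in the statement intersects the fibre $\{\sigma\}\times\cX$ exactly in $\overline{B}_\sigma$, this is the only zero of $\cF(\sigma,\cdot)$ in that set. Finally, the contraction estimate forces $\|D_w\cF(\sigma,w(\sigma)) - A\|_{\cL(\cX,\cY)} \le 1/(2K)$, so $D_w\cF(\sigma,w(\sigma))$ is invertible by a Neumann series; applying the classical implicit function theorem locally at each such $\sigma$ then shows $\sigma\mapsto w(\sigma)$ inherits the $C^k$-smoothness of~$\cF$. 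I expect the only delicate point to be the bookkeeping of constants across the three estimates --- in particular matching the quadratic-in-$\delta_\sigma$ remainder to the coefficients in~\eqref{branch:thm5}; the slant vector $w^\circledast$ is there precisely so that an otherwise non-small first-order term $\|D_\sigma\cF(\sigma^*,w^*)\|_\cY\,\delta_\sigma$ is replaced by the controllable $\eta\,\delta_\sigma$, which is what makes the admissible $\delta_\sigma$-range, and hence the validated branch segment, nontrivial. This is the content of~\cite[Theorem~5]{sander:wanner:16a}.
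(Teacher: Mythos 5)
Your argument is correct, and it is essentially the intended one: the paper itself gives no proof of Theorem~\ref{branch:thm} but simply cites \cite[Theorem~5]{sander:wanner:16a}, and your slanted Newton-type contraction --- $T_\sigma(w) = w - A^{-1}\cF(\sigma,w)$ on the balls centered at $w^* + (\sigma-\sigma^*)w^\circledast$, with (H1)--(H3) feeding the residual, contraction, and self-map estimates exactly matched to \eqref{branch:thm3}--\eqref{branch:thm5} --- is precisely the mechanism behind that constructive implicit function theorem, so you have in effect reconstructed the omitted proof (your residual bound even has slack, since the Taylor remainder carries an extra factor $1/2$). Two small points: the theorem asserts the conclusion for \emph{every} admissible pair $(\delta_\sigma,\delta_w)$, and your estimates indeed use only the inequalities, not your particular construction of one pair, so you should phrase that step as ``fix an arbitrary admissible pair'' rather than the specific one you built; and the final smoothness claim needs one more line --- first deduce continuity of $\sigma \mapsto w(\sigma)$ from the uniform $\tfrac12$-contraction (the standard estimate $\|w(\sigma')-w(\sigma)\|_\cX \le 2K\|\cF(\sigma',w(\sigma))-\cF(\sigma,w(\sigma))\|_\cY$), so that the locally unique $C^k$ branch produced by the classical implicit function theorem at $(\sigma,w(\sigma))$ can be identified with your fixed-point branch.
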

For an illustration of the above theorem we refer the reader
to~\cite[Figure~7]{wanner:18a}. Note that for the application of this result,
and of course also for the verification for the assumptions~(H2) and~(H3),
it is crucial to have the partial Fr\'echet derivatives of~$\cF$ at hand.
One can easily show that they are given by
\begin{eqnarray}
  D_w\cF(\sigma,w)[\tilde{w}] & = &
    \left( \ell(\tilde{v}) \; , \;\;
    \tilde{\lambda} D_\lambda F(\sigma,\lambda,u) +
    D_u F(\sigma,\lambda,u)[\tilde{u}] \; ,
    \right. \nonumber \\[0.5ex]
  & & \;\;\;\left. \tilde{\lambda} D_{\lambda u}
    F(\sigma,\lambda,u)[v] +
    D_{uu} F (\sigma,\lambda,u)[v,\tilde{u}] +
    D_u F(\sigma,\lambda,u)[\tilde{v}]
    \right) \label{eqn:compval4a1} \\[0.5ex]
   & = &
    \left( \ell(\tilde{v}) \; , \;\;
    \tilde{\lambda} (-\Delta f(u+\mu) - \sigma u) -
    \Delta ( \Delta \tilde{u} + \lambda f'(u + \mu) \tilde{u} )
    - \lambda \sigma \tilde{u} \; ,
    \right. \nonumber \\[0.5ex]
  & & \quad\;\;\;\left. \tilde{\lambda} (-\Delta f'(u+\mu)v - \sigma v) -
    \Delta( \lambda f''(u + \mu) v \tilde{u})
    \right. \nonumber \\[0.5ex]
  & & \quad\;\;\;\left. \qquad -
    \Delta ( \Delta \tilde{v} + \lambda f'(u + \mu) \tilde{v} )
    - \lambda \sigma \tilde{v} \right) \; ,
    \label{eqn:compval4a2}
\end{eqnarray}
where we write $w = (\lambda,u,v)$ and $\tilde{w} =
(\tilde{\lambda},\tilde{u},\tilde{v})$, as well as
\begin{equation} \label{eqn:compval4b}
  D_\sigma\cF(\sigma,w) \; = \;
  \left( 0, \; D_\sigma F(\sigma,\lambda,u), \;
    D_{\sigma u} F(\sigma,\lambda,u)[v] \right) \; = \;
  \left( 0, \; -\lambda u, \; -\lambda v \right) \; .
\end{equation}
\subsection{Verifying the assumptions for the computer-assisted proofs}
\label{sec:compval2}
We now address the verification of assumptions~(H1) through~(H3) of
Theorem~\ref{branch:thm}. With the exception of the last of these, all
of them can be treated as in our previous papers~\cite{rizzi:etal:22a,
sander:wanner:21a, wanner:18b}. In view of this, we only provide a short
descriptions and leave the details to the cited references.

We begin our discussion by illustrating how~(H1) can be established. It
was shown in Remark~\ref{rem:fourierabc} that the crucial spaces~$X_a$,
$X_b$, and~$X_c$ have straightforward explicit Fourier cosine series
representations. Thus, it is natural to find the pitchfork bifurcation
point approximation in the form of a truncated series. If we denote the
resulting discretization size by~$N \in \N$, then we consider the orthogonal
projection~$P_N : X \to X$ defined via
\begin{equation} \label{def:projectionPN}
  P_N u(x) \; := \; \sum_{k=1}^N a_k \cos(k \pi x)
  \quad\mbox{ for every }\quad
  u(x) \; = \; \sum_{k=1}^\infty a_k \cos(k \pi x)
  \;\;\mbox{ in~$X$} \; ,
\end{equation}
see also~(\ref{rem:fourierabc1}). An analogous projection~$Q_N$ can also
be defined on the image space~$Y$. Thus, one can project both the second
and the third component of the extended nonlinear operator~$\cF$
in~(\ref{eqn:compval2}) onto the spaces~$Q_N Y_a$ and~$Q_N Y$, respectively,
and by only allowing arguments~$u^* \in P_N X_a$ and $v^* \in P_N X$ one then
obtains a finite-dimensional system which can be solved numerically for
the solution approximation~$u^*$ and the kernel function~$v^*$, at the
approximate parameter values~$\lambda^*$. Notice that the dimension
of this system is given by~$1 + \lfloor N/n \rfloor + N$, as long as~$N$
is larger than~$n$. By choosing appropriate Hilbert space norms on
the spaces~$X$ and~$Y$ as in~\cite{rizzi:etal:22a, sander:wanner:21a},
one can then easily compute an upper bound~$\rho$ on the residual based
on the Fourier cosine sum representations of~$u^*$ and~$v^*$. In fact,
for computational convenience we use the norms~$\| u \|_X = \| \Delta u
\|_{L^2(0,1)}$ and~$\| u \|_Y = \| \Delta^{-1} u \|_{L^2(0,1)}$, which
are equivalent to the respective standard Sobolev norms on these spaces.
Moreover, the rigorous upper bound is established using interval arithmetic,
more precisely, the Matlab package INTLAB~\cite{rump:99a}.
\begin{table}
\begin{center}
  \begin{tabular}{|l|c|c||l|c|c|}
  \hline
  & $\cL$ & $D_w\cF(\sigma^*,w^*)$ & & $\cL$ & $D_w\cF(\sigma^*,w^*)$ \\ \hline
  Spaces & $U_1$ & $X_a$ & Spaces & $V_1$ & $Y_a$ \\
  & $U_2$ & $X$ & & $V_2$ & $Y$ \\ \hline
  Arguments & $\eta_1$ & $\tilde{\lambda}$ & Coefficients & $\alpha_{11}$ & $0$ \\
  & $v_1$ & $\tilde{u}$ & & $\ell_{11}$ & $0$ \\
  & $v_2$ & $\tilde{v}$ & & $\ell_{12}$ & $\ell$ \\ \hline
  Coefficients & $\beta_1$ & $1$ & Coefficients & $\beta_2$ & $1$ \\
  & $b_{11}$ & $\Delta f(u^*+\mu) + \sigma^* u^*$ & & $b_{21}$ &
    $\Delta f'(u^*+\mu)v^* + \sigma^* v^*$ \\
  & $c_{11}$ & $\lambda^* f'(u^* + \mu)$ & & $c_{21}$ & $\lambda^* f''(u^* + \mu) v^*$ \\
  & $c_{12}$ & $0$ & & $c_{22}$ & $\lambda^* f'(u^* + \mu)$ \\
  & $\gamma_{11}$ & $\lambda^* \sigma^*$ & & $\gamma_{21}$ & $0$ \\
  & $\gamma_{12}$ & $0$ & & $\gamma_{22}$ & $\lambda^* \sigma^*$ \\ \hline
  \end{tabular}
  \vspace*{0.3cm}
  \caption{\label{table:thm:k}
           Reformulating the Fr\'echet derivative~$D_w\cF(\sigma^*,w^*)$ as the
           linear elliptic operator~$\cL$ defined in equations~(\ref{thm:k0}),
           (\ref{thm:k1}), and ~(\ref{thm:k2}). In our situation, we have
           $p = 1$ and $q = 2$, and the spaces, arguments, and coefficients in
           the respective operator definitions correspond to each other as
           outlined in the table.}
\end{center}
\end{table}

We now turn our attention to the hypothesis~(H2). The required inverse norm
bound for the Fr\'echet derivative~$D_w\cF(\sigma^*,w^*)$ presented
in~(\ref{eqn:compval4a2}) can be established directly using the results
of~\cite{rizzi:etal:22a}. In this paper, we developed a method based on
the Neumann series and the construction of a suitable approximate
inverse to compute a rigorous bound on the inverse operator norm of certain
fourth-order linear elliptic operators which include scalar constraints.
More precisely, in~\cite[Theorem~4.1]{rizzi:etal:22a} we considered
a linear operator
\begin{equation} \label{thm:k0}
  \cL : \R^p \times \prod_{i=1}^q U_i \to
    \R^p \times \prod_{i=1}^q V_i \; ,
\end{equation}
where $U_i \subset X$ and $V_i \subset Y$ are suitably chosen closed
subspaces, which acts on the argument vector~$(\eta_1, \ldots, \eta_p,
v_1, \ldots, v_q)$, and whose first~$p$ components are given by
\begin{equation} \label{thm:k1}
  \sum_{i=1}^p \alpha_{ki} \eta_i +
    \sum_{j=1}^q \ell_{kj}(v_j)
  \qquad\mbox{ for }\qquad
  k = 1,\ldots,p \; ,
\end{equation}
while the remaining~$q$ functional components are
\begin{equation} \label{thm:k2}
  -\beta_{k} \Delta^2 v_k 
    - \sum_{i=1}^p b_{ki} \eta_i
    - \Delta \sum_{j=1}^q c_{kj} v_j
    - \sum_{j=1}^{q} \gamma_{kj} v_j
  \qquad\mbox{ for }\qquad
  k = 1,\ldots,q \; .
\end{equation}
Clearly, the operator~$D_w\cF(\sigma^*,w^*)$ defined in~(\ref{eqn:compval4a2})
falls into this category, and the necessary correspondences are collected
in Table~\ref{table:thm:k}. Thus, we can simply apply this theorem to 
compute the norm estimate, and we refer the readers
to~\cite[Theorem~4.1]{rizzi:etal:22a} for more details.

As the final step, we have to establish the Lipschitz estimates required
in~(H3). This can be accomplished similar to our proceeding in~\cite{rizzi:etal:22a,
sander:wanner:21a}, so we will keep our discussion as short as possible. For this,
we define for every $\ell \in \N_0$ the constant
\begin{equation} \label{def:lipschitz:01}
  f^{(\ell)}_{\max} \; := \;
  \max_{|\rho| \le \|u^*\|_\infty + \overline{C}_1 d_w}
    \left| f^{(\ell)}(\rho + \mu) \right| \; ,
  \qquad\mbox{ where }\qquad
  \overline{C}_1 = 0.149072
\end{equation}
denotes the embedding constant from Sobolev's embedding theorem in one space dimension
introduced in~\cite[Lemma~2.3]{sander:wanner:21a}, see also~\cite{wanner:18a}. Furthermore,
consider pairs~$(\sigma,w)$ and~$(\sigma^*,w^*)$ in~$\R \times \cX$, where~$\cX$ was
defined in~(\ref{eqn:compval3}), which satisfy both $|\sigma - \sigma^*| \le d_\sigma$
and $\| w - w^* \| \le d_w$. Then the definition of the operator~$F$ in~(\ref{eqn:compval1})
implies the estimates
\begin{eqnarray}
  & & \hspace*{-2cm}
    \| D_\lambda F(\sigma,\lambda,u) - D_\lambda F(\sigma^*,\lambda^*,u^*) \|_Y
    \nonumber \\
  & \le & \| \Delta f(u+\mu) + \sigma u - \Delta f(u^*+\mu) - \sigma^* u^* \|_Y
    \nonumber \\
  & \le & \| f(u+\mu) - f(u^*+\mu)\|_{L^2}
    + \| \sigma u - \sigma u^* \|_Y + \| \sigma u^* - \sigma^* u^* \|_Y
    \nonumber \\
  & \le & \frac{\pi^2 f^{(1)}_{\max} + |\sigma^*| + d_\sigma}{\pi^4} \,
    \| u - u^* \|_X + \| u^* \|_Y \, \left| \sigma - \sigma^* \right|
    \label{def:lipschitz:02} \; ,
\end{eqnarray}
where we also used the estimates $\| u \|_{L^2} \le \| u \|_X / \pi^2$
and $\| u \|_Y \le \| u \|_X / \pi^4$ for all $u \in X$, see for
example~\cite[Lemma~2.6]{sander:wanner:21a}. Similarly, one obtains
for all $\tilde{u} \in X$ the estimate
\begin{eqnarray*}
  & & \hspace*{-2cm}
    \| D_u F(\sigma,\lambda,u)[\tilde{u}] -
       D_u F(\sigma^*,\lambda^*,u^*)[\tilde{u}] \|_Y \\
  & \le & \| \Delta ( \lambda f'(u + \mu) \tilde{u} -
    \lambda^* f'(u^* + \mu) \tilde{u} ) \|_Y +
    | \lambda \sigma - \lambda^* \sigma^* | \, \| \tilde{u} \|_Y \\
  & \le & |\lambda| \, \| f'(u + \mu) \tilde{u} - f'(u^* + \mu) \tilde{u} \|_{L^2} +
    | \lambda - \lambda^* | \, \| f'(u^* + \mu) \tilde{u} \|_{L^2} \\
  & & \qquad +
    | \lambda - \lambda^* | \, \frac{|\sigma|}{\pi^4} \, \| \tilde{u} \|_X +
    | \sigma - \sigma^* | \, \frac{|\lambda^*|}{\pi^4} \, \| \tilde{u} \|_X \\
  & \le & |\lambda| \, f^{(2)}_{\max} \, \| u - u^* \|_\infty \| \tilde{u} \|_{L^2} +
    | \lambda - \lambda^* | \, \| f'(u^* + \mu) \|_\infty \, \| \tilde{u} \|_{L^2} \\
  & & \qquad +
    | \lambda - \lambda^* | \, \frac{|\sigma|}{\pi^4} \, \| \tilde{u} \|_X +
    | \sigma - \sigma^* | \, \frac{|\lambda^*|}{\pi^4} \, \| \tilde{u} \|_X \; ,
\end{eqnarray*}
which in turn implies
\begin{eqnarray}
  & & \hspace*{-2cm}
    \| D_u F(\sigma,\lambda,u) -
       D_u F(\sigma^*,\lambda^*,u^*) \|_{\cL(X,Y)} \nonumber \\
  & \le & \frac{\pi^2 \| f'(u^* + \mu) \|_\infty + |\sigma^*| + d_\sigma}{\pi^4}
    \, | \lambda - \lambda^* | \nonumber \\
  & & \qquad +
    \frac{\overline{C}_1 f^{(2)}_{\max} (|\lambda^*|+d_w)}{\pi^2}
    \, \| u - u^* \|_X +
      \frac{|\lambda^*|}{\pi^4} \,
    | \sigma - \sigma^* | \; .
    \label{def:lipschitz:03}
\end{eqnarray}
We now start estimating the two terms which remain in the last component
of the operator~$\cF$. On the one hand, we have
\begin{eqnarray}
  & & \hspace*{-1.5cm}
    \| D_{\lambda u}F(\sigma,\lambda,u)[v] -
    D_{\lambda u}F(\sigma^*,\lambda^*,u^*)[v^*]\|_Y \nonumber \\
  & \le & \| \Delta(f'(u+\mu)v - f'(u^*+\mu)v^*) \|_Y +
    \| \sigma v - \sigma^* v^* \|_Y \nonumber \\
  & \le & \|f'(u+\mu)v -f'(u^*+\mu)v^* \|_{L^2} +
    |\sigma| \, \| v - v^* \|_Y + \| v^* \|_Y \, |\sigma - \sigma^*|
    \nonumber \\
  & \le & \|f'(u+\mu)v - f'(u+\mu) v^*\|_{L^2} +
    \|f'(u+\mu)v^* - f'(u^*+\mu)v^* \|_{L^2} \nonumber \\
  & & \qquad +
    \frac{|\sigma|}{\pi^4} \, \| v - v^* \|_X + \| v^* \|_Y \,
    |\sigma - \sigma^*| \nonumber \\
  & \le & \frac{f^{(1)}_{\max}}{\pi^2} \, \|v-v^*\|_X +
    \frac{f^{(2)}_{\max} \, \|v^*\|_\infty}{\pi^2} \, \| u-u^* \|_{X} +
    \frac{|\sigma|}{\pi^4} \, \| v - v^* \|_X + \| v^* \|_Y \,
    |\sigma - \sigma^*| \nonumber \\
  & \le & 
    \frac{f^{(2)}_{\max} \, \|v^*\|_\infty}{\pi^2} \, \| u-u^* \|_{X} +
    \frac{\pi^2 f^{(1)}_{\max} + |\sigma^*| + d_\sigma}{\pi^4} \,
    \|v-v^*\|_X + \| v^* \|_Y \, |\sigma - \sigma^*| \; ,
    \label{def:lipschitz:04}
\end{eqnarray}
while on the other hand one obtains the estimate
\begin{eqnarray*}
  & & \hspace*{-1.5cm}
    \| D_{uu}F(\sigma,\lambda,u)[v,\tilde{u}] -
    D_{uu}F(\sigma^*,\lambda^*,u^*)[v^*,\tilde{u}] \|_Y \\
  & \le & 
    \| \Delta( \lambda f''(u + \mu) v \tilde{u} -
    \lambda^* f''(u^* + \mu) v^* \tilde{u}) \|_Y \\
  & \le & \| \lambda f''(u + \mu) v \tilde{u} -
    \lambda f''(u^* + \mu) v \tilde{u} \|_{L^2} +
    \| \lambda f''(u^* + \mu) v \tilde{u} -
    \lambda^* f''(u^* + \mu) v^* \tilde{u} \|_{L^2} \\
  & \le & |\lambda| \, f^{(3)}_{\max} \, \| u - u^* \|_\infty
    \, \| v \|_\infty \, \| \tilde{u} \|_{L^2} +
    \| \lambda f''(u^* + \mu) v \tilde{u} -
    \lambda f''(u^* + \mu) v^* \tilde{u} \|_{L^2} \\
  & & \qquad +
    \| \lambda f''(u^* + \mu) v^* \tilde{u} -
    \lambda^* f''(u^* + \mu) v^* \tilde{u} \|_{L^2} \\
  & \le & |\lambda| \, f^{(3)}_{\max} \, \| u - u^* \|_\infty
    \, \| v \|_\infty \, \| \tilde{u} \|_{L^2} +
    |\lambda| \, \| f''(u^* + \mu) \|_\infty \,
    \| v - v^* \|_\infty \, \| \tilde{u} \|_{L^2} \\
  & & \qquad + |\lambda - \lambda^*| \,
    \| f''(u^* + \mu) v^* \|_\infty \, \| \tilde{u} \|_{L^2}
    \; ,
\end{eqnarray*}
which in turn implies
\begin{eqnarray}
  & & \hspace*{-1.5cm}
    \| D_{uu}F(\sigma,\lambda,u)[v,\cdot] -
    D_{uu}F(\sigma^*,\lambda^*,u^*)[v^*,\cdot] \|_{\cL(X,Y)}
    \nonumber \\
  & \le & \frac{\| f''(u^* + \mu) v^* \|_\infty}{\pi^2} \, 
    |\lambda - \lambda^*| +
    \frac{\overline{C}_1 f^{(3)}_{\max} (|\lambda^*| + d_w)
    (\| v^* \|_\infty + \overline{C}_1 d_w)}{\pi^2} \, \| u - u^* \|_X
    \nonumber \\
  & & \qquad +
    \frac{\overline{C}_1 \| f''(u^* + \mu) \|_\infty (|\lambda^*| + d_w)}{\pi^2} \,
    \| v - v^* \|_X \; . \label{def:lipschitz:05}
\end{eqnarray}
Altogether, we have established the estimates
\begin{eqnarray*}
  \| D_\lambda F(\sigma,\lambda,u) - D_\lambda
    F(\sigma^*,\lambda^*,u^*) \| & \le &
    c_1 \| u - u^* \| +
    c_2 |\sigma - \sigma^*| \; , \\
  \| D_u F(\sigma,\lambda,u) -
    D_u F(\sigma^*,\lambda^*,u^*) \| & \le &
    c_3 | \lambda - \lambda^* | +
    c_4 \| u - u^* \| +
    c_5 |\sigma - \sigma^*| \; , \\
  \| D_{\lambda u}F(\sigma,\lambda,u)[v] -
    D_{\lambda u}F(\sigma^*,\lambda^*,u^*)[v^*]\| & \le &
    c_6 \| u-u^* \| +
    c_7 \|v-v^*\| + 
    c_8 |\sigma - \sigma^*| \; , \\
  \| D_{uu}F(\sigma,\lambda,u)[v,\cdot] -
    D_{uu}F(\sigma^*,\lambda^*,u^*)[v^*,\cdot] \| & \le &
    c_9 |\lambda - \lambda^*| +
    c_{10} \| u - u^* \| +
    c_{11} \| v - v^* \| \; ,
\end{eqnarray*}
where the Lipschitz constants~$c_k$ can be inferred from
equations~(\ref{def:lipschitz:01}) through~(\ref{def:lipschitz:05}),
and we dropped the subscripts indicating the specific norms.
After these preparations, one can now easily establish the estimates
in~(H3). For this, we write the nonlinear operator~$\cF$ in component
form as $\cF = (\cF_1,\cF_2,\cF_3)$. Then we clearly have
\begin{displaymath}
  D_w\cF_1(\sigma,w)[\tilde{w}] -
    D_w\cF_1(\sigma^*,w^*)[\tilde{w}] = 0 \; ,
\end{displaymath}
while for the second component one obtains
\begin{eqnarray*}
  & & \hspace*{-1.25cm}
    \| D_w\cF_2(\sigma,w)[\tilde{w}] -
    D_w\cF_2(\sigma^*,w^*)[\tilde{w}] \|_Y \\
  & \le & \| D_\lambda F(\sigma,\lambda,u) -
    D_\lambda F(\sigma^*,\lambda^*,u^*) \|_Y |\tilde{\lambda}| +
    \| D_u F(\sigma,\lambda,u)[\tilde{u}] -
    D_u F(\sigma^*,\lambda^*,u^*)[\tilde{u}] \|_Y \\
  & \le & \left( c_1 \| u - u^* \|_X + c_2 |\sigma - \sigma^*|
    \right) |\tilde{\lambda}|
    + \left( c_3 | \lambda - \lambda^* | +
    c_4 \| u - u^* \|_X + c_5 |\sigma - \sigma^*| \right)
    \| \tilde{u} \|_X \\
  & \le & \left( c_3 | \lambda - \lambda^* | +
    (c_1+c_4) \| u - u^* \|_X + (c_2+c_5) |\sigma - \sigma^*| \right)
    \| \tilde{w} \|_{\cX} \\
  & \le & \left( \sqrt{c_3^2 + (c_1+c_4)^2} \,
    \| w - w^* \|_{\cX} + (c_2+c_5) |\sigma - \sigma^*| \right)
    \, \| \tilde{w} \|_{\cX} \; ,
\end{eqnarray*}
and similarly for the third component
\begin{eqnarray*}
  & & \hspace*{-1.0cm}
    \| D_w\cF_3(\sigma,w)[\tilde{w}] -
    D_w\cF_3(\sigma^*,w^*)[\tilde{w}] \|_Y \\
  & \le & \left( c_6 \| u-u^* \|_X + c_7 \|v-v^*\|_X + 
    c_8 |\sigma - \sigma^*| \right) |\tilde{\lambda}| \\
  & & \qquad +
    \left( c_9 |\lambda - \lambda^*| + c_{10} \| u - u^* \|_X +
    c_{11} \| v - v^* \|_X \right) \| \tilde{u} \|_X \\
  & & \qquad +
    \left( c_3 | \lambda - \lambda^* | +
    c_4 \| u - u^* \|_X +
    c_5 |\sigma - \sigma^*| \right) \| \tilde{v} \|_X \\
  & \le & \left( (c_3+c_9) | \lambda - \lambda^* | +
    (c_4+c_6+c_{10}) \| u - u^* \|_X \right. \\
  & & \qquad \left. + 
    (c_7+c_{11}) \| v - v^* \|_X + 
    (c_5+c_8) |\sigma - \sigma^*| \right)
    \| \tilde{w} \|_{\cX} \\
  & \le & \left( \sqrt{(c_3+c_9)^2 + (c_4+c_6+c_{10})^2
    + (c_7+c_{11})^2} \, \| w - w^* \|_{\cX} + 
    (c_5+c_8) |\sigma - \sigma^*| \right)
    \| \tilde{w} \|_{\cX} \; .
\end{eqnarray*}
If we now define the constants~$M_1$ and~$M_2$ as
\begin{eqnarray}
  M_1 & = & \sqrt{2 \max\left\{ c_3^2 + (c_1+c_4)^2 \, , \;
    (c_3+c_9)^2 + (c_4+c_6+c_{10})^2 + (c_7+c_{11})^2
    \right\}} \; , \label{def:lipschitz:06} \\
  M_2 & = & \sqrt{2} \, \max\left\{ c_2+c_5 \, , \;
    c_5+c_8 \right\} \; , \label{def:lipschitz:07}
\end{eqnarray}
then one immediately obtains
\begin{displaymath}
  \| D_w\cF(\sigma,w)[\tilde{w}] -
    D_w\cF(\sigma^*,w^*)[\tilde{w}] \|_{\cY} \; \le \;
  \left( M_1 \| w - w^* \|_{\cX} + M_2 |\sigma - \sigma^*|
    \right) \| \tilde{w} \|_{\cX} \; ,
\end{displaymath}
i.e., the first estimate in~(H3) holds. Furthermore, in view
of~(\ref{eqn:compval4b}) we have the estimate
\begin{eqnarray*}
  & & \hspace*{-1.5cm}
    \| D_\sigma\cF(\sigma,w) - D_\sigma\cF(\sigma^*,w^*)
    \|_{\cY} \\
  & \le & \left( \| \lambda u - \lambda^* u^* \|_Y^2 +
    \| \lambda v - \lambda^* v^* \|_Y^2 \right)^{1/2} \\
  & \le & \sqrt{2} \, \left( \| \lambda u - \lambda u^* \|_Y^2 +
    \| \lambda u^* - \lambda^* u^* \|_Y^2 +
    \| \lambda v - \lambda v^* \|_Y^2 +
    \| \lambda v^* - \lambda^* v^* \|_Y^2 \right)^{1/2} \\
  & \le & \sqrt{2} \, \left( |\lambda|^2 \, \| u - u^* \|_Y^2 +
    \| u^* \|_Y^2 \, |\lambda - \lambda^*|^2 +
    |\lambda|^2 \, \| v - v^* \|_Y^2 +
    \| v^* \|_Y^2 \, |\lambda - \lambda^*|^2 \right)^{1/2} \\
  & \le & \sqrt{2} \, \max\left\{ \frac{|\lambda^*| + d_w}{\pi^4}
    \, , \; \sqrt{\| u^* \|_Y^2 + \| v^* \|_Y^2} \right\} \,
    \| w - w^* \|_{\cX} \; ,
\end{eqnarray*}
and therefore the second estimate in~(H3) is satisfied with
\begin{equation} \label{def:lipschitz:08}
  M_3 \; = \;
    \sqrt{2} \, \max\left\{ \frac{|\lambda^*| + d_w}{\pi^4}
    \, , \; \sqrt{\| u^* \|_Y^2 + \| v^* \|_Y^2} \right\}
  \quad\mbox{ and }\quad
  M_4 \; = \; 0 \; .
\end{equation}
This completes the verification of the assumptions of the regular
branch segment validation result in Theorem~\ref{branch:thm}.
\subsection{Sample computational validations}
\label{sec:compval3}

\begin{table}
  \begin{center}
  \begin{tabular}{|c|c|c|c|c|c|c|} \hline
$n$ &$\lambda$ 	 &  		$N$	   &    $\tau$   &    $K$     &  $M_1$    &        $d_w$      \\ \hline
  5 &  115.69    &          178    &    0.49917  &     73.453 &    54.859 &        1.2408e-04 \\   
  7 &  315.57    &          670    &    0.29987  &    154.78  &   150.85  &        2.1415e-05 \\
  4 &  336.05    &          874    &    0.49972  &    503.86  &   253.14  &        3.9201e-06 \\
  6 &  769.12    &         2000    &    0.29979  &    829.58  &   636.9   &        9.4642e-07   
     \\ \hline
  \end{tabular}
  \end{center}
  \caption{\label{table:valid}
    Validation parameters for the validated solutions.}
\end{table}

In this section, we implement the techniques listed above in order to computationally validate
the first two solutions shown in Figure~\ref{fig:oddodd}, and the first two solutions shown
in Figure~\ref{fig:eveneven} for the fixed parameter value $\sigma = 6$. The computed
validation parameters are given in Table~\ref{table:valid}. For larger~$\lambda$
values, the computationally necessary value of~$N$ becomes extremely large if one uses the
model equations in their original unmodified form. Notice that this is unavoidable, as the
number of modes required to represent the solutions increases quickly with increasing~$\lambda$.
In addition, in this limit the equation becomes closer to singular, and therefore one fully
expects that numerical approaches become significantly more difficult.

Nevertheless, our focus in this paper is on the analytical background for a new type of
symmetry-breaking pitchfork bifurcation. For the sake of space, we therefore do not address
the computationally heavy methods needed to refine this method, and choose to address this
numerical machinery in a future work. The following are two specific techniques which we
plan to incorporate in future. 

In order to improve the validation and to address the validation of further solutions with
larger~$n$ values and substantially larger~$\lambda$ values at bifurcation, we would need
to include preconditioning in our validation, which essentially amounts to rescalings in the
underlying partial differential equation. In particular, since both~$\lambda$ versus~$(u,v)$,
and the components of function~$\cF_e$, occur on extremely different length scales, one would
expect to see rather substantial stiffness in the computation of both the Lipschitz constants
and the bound~$K$. See for example the improvement in the method due to preconditioning
in the recent paper~\cite{kamimoto:kim:sander:wanner:22a}. After preconditioning is established,
we would further be able to consider the case of varying~$\sigma$, and create a validated
continuation method to find the curve of bifurcation points in the two-parameter family.
In~\cite{kamimoto:kim:sander:wanner:22a}, we developed a validated pseudo-arclength
continuation method. In future work, we intend to adapt this method to the current setting. 

\section*{Acknowledgments}
The research of E.S.\ and T.W.\ was partially supported by the
Simons Foundation under Awards~636383 and~581334, respectively.

%
%

\addcontentsline{toc}{section}{References}
\footnotesize
%
%
\bibliography{wanner1a,wanner1b,wanner2a,wanner2b,wanner2c}
\bibliographystyle{abbrv}
\end{document}